\documentclass[letterpaper,11pt,leqno]{amsart} 

\usepackage[T1]{fontenc}
\usepackage[utf8]{inputenc}
\usepackage[portrait,margin=3.5cm]{geometry} 
\usepackage[mathscr]{eucal}
\usepackage[colorlinks=true, bookmarksdepth=2]{hyperref}
\usepackage{amsmath,amssymb,amsthm,amsfonts,amsbsy,latexsym,dsfont,color,graphicx,enumitem, upgreek, xcolor, xfrac}
\usepackage[normalem]{ulem}
\usepackage[numeric,initials,nobysame]{amsrefs} 
\usepackage[foot]{amsaddr}

\definecolor{aquamarine}{HTML}{0035BE}

\hypersetup{
    citecolor=blue,%
    linkcolor=brickred,
}
\definecolor{mahogany}{cmyk}{0, 0.77, 0.87, 0}
\definecolor{salmon}{cmyk}{0, 0.53, 0.38, 0}
\definecolor{melon}{cmyk}{0, 0.46, 0.50, 0}
\definecolor{yellowgreen}{cmyk}{0.44, 0, 0.74, 0}
\definecolor{brickred}{cmyk}{0, 0.89, 0.94, 0.28}
\definecolor{OliveGreen}{cmyk}{0.64, 0, 0.95, 0.40}
\definecolor{RawSienna}{cmyk}{0, 0.72, 1.0, 0.45}
\definecolor{ZurichRed}{rgb}{1, 0, 0}

\newtheorem{theorem}{Theorem}[section]
\newtheorem{corollary}[theorem]{Corollary}
\newtheorem{proposition}[theorem]{Proposition}
\newtheorem{lemma}[theorem]{Lemma}
\newtheorem{problem}[theorem]{Problem}

\newtheorem{remark}[theorem]{Remark}

\newtheorem{question}[theorem]{Question}
\newtheorem{conjecture}[theorem]{Conjecture}

\theoremstyle{remark}

\numberwithin{equation}{section}


\newcommand{\ext}{\mathrm{ext}}
\newcommand{\dis}{\mathrm{dis}}

\newcommand{\Tdis}{T_{\dis}}


\newcommand{\N}{\mathbb{N}}

\newcommand{\R}{\mathbb{R}}
\newcommand{\Z}{\mathbb{Z}}

\newcommand{\bH}{\mathbb{H}}

\newcommand{\bK}{\mathbb{K}}

\newcommand{\bR}{\mathbb{R}}
\newcommand{\bS}{\mathbb{S}}

\newcommand{\cF}{\mathcal{F}}

\newcommand{\cP}{\mathcal{P}}

\newcommand{\cU}{\mathcal{U}}

\newcommand{\rE}{\mathrm{E}}

\newcommand{\fM}{\mathfrak{M}}


\newcommand{\wh}{\widehat}
\newcommand{\wt}{\widetilde}
\newcommand{\ds}{\displaystyle}
\newcommand{\varep}{\varepsilon}
\newcommand{\ol}{\overline}

\newcommand{\norm}[1]{\| #1 \|}

\newcommand{\Paren}[1]{\left( #1 \right)}

\newcommand{\Abs}[1]{\left| #1 \right|}
\newcommand{\brk}[1]{\langle #1 \rangle}
\newcommand{\parder}[2]{\frac{\partial #1 }{\partial #2 }}

\newcommand{\ind}{\mathds{1}}

\DeclareMathOperator{\sign}{sign}

\newcommand{\dht}{\mathcal{H}}
\newcommand{\drt}{\mathcal{R}}

\newcommand{\ph}{\varphi}
\newcommand{\eps}{\varepsilon}

\newcommand{\expr}[1]{\left( #1 \right)}
\newcommand{\tfloor}[1]{\lfloor #1 \rfloor}


\begin{document}
\dedicatory{In memory of Richard F. Gundy (1933-2023)} 
\title[Discrete singular integrals]{Sharp $\ell^p$ inequalities for discrete singular integrals on the lattice $\Z^d$}

\author{Rodrigo
Ba\~nuelos$^1$}
\address{$^1$Department of Mathematics \\ Purdue University \\ 150 N.\@ University Street, West Lafayette, IN 47907}
\email{banuelos@purdue.edu}

\author{Daesung Kim$^2$}
\address{$^2$School of Mathematics \\ Georgia Institute of Technology \\ 686 Cherry Street, Atlanta, GA 30332}
\email{dkim3009@gatech.edu}

\author{Mateusz Kwa\'snicki$^3$}
\address{$^3$Faculty of Pure and Applied Mathematics \\ Wrocław University of Science and Technology \\ ul.\@ Wybrzeże Wyspiańskiego 27 \\ 50-370 Wrocław, Poland}
\email{mateusz.kwasnicki@pwr.edu.pl}

\thanks{R.~Ba\~nuelos is supported in part by NSF Grant 1403417-DMS. M.~Kwa\'snicki is supported by the Polish National Science Centre (NCN) grant 2019/33/B/ST1/03098.}
\date{\today}
\subjclass[2010]{Primary 60G44 42A50, 42B20,  Secondary 60J70, 39A12.}
\keywords{discrete singular integrals, martingale transforms, Doob $h$-transforms, sharp inequalities}

\begin{abstract}  This paper investigates higher dimensional versions of the longstanding conjecture verified in \cite{BanKwa} that the $\ell^p$-norm of the discrete Hilbert transform on the integers is the same as the $L^p$-norm of the Hilbert transform on the real line.  
It computes the $\ell^p$-norms  of a family of discrete operators on the lattice $\Z^{d}$, $d\geq 1.$ They are discretizations of a new class of singular integrals on $\R^d$ that have the same kernels as the classical Riesz transforms near zero and similar behavior at infinity.   
The discrete operators have the same $p$-norms as the classical Riesz transforms on $\R^d$.  They are constructed  as conditional expectations of martingale transforms of Doob h-processes conditioned to exit the upper--half space $\R^d\times \R_{+}$ only on the lattice $\Z^d$. 
The paper also presents a discrete analogue of the classical method of rotations which gives the norm of a different variant of discrete Riesz transforms on $\Z^d$. Along the way a new proof is given based on Fourier transform techniques of the key identity used to identify the norm of the discrete Hilbert transform in \cite{BanKwa}. Open problems are stated. 
\end{abstract}

\maketitle
\setcounter{tocdepth}{1}
\tableofcontents

\section{Introduction and Statement of Results} 
The probabilistic representation \`a la Gundy--Varopoulos \cite{GV79} of the classical Riesz transforms and other singular integrals and Fourier multipliers as conditional expectations (projections) of stochastic integrals, in combination with the sharp martingale inequalities of Burkholder \cite{Bur84} and their versions for orthogonal martingales \cite{BanWang} and non-symmetric transforms \cite{Choi, BanOse1}, has proven to be a powerful tool in obtaining sharp, or near sharp, $L^p$-bounds for these operators in a variety of geometric settings. A particular feature of these techniques is that they give $L^p$-bounds independent of the geometry of the ambient space.  Most notably, the bounds are independent of  the dimension. For example, in \cite{BanWang} such techniques were used to show that the $L^p$-norm, $1<p<\infty$, of the Riesz transforms on $\bR^d$, $d>1$, is the same as that of the Hilbert transform on  $\bR$ found by S. Pichorides \cite{Pic72} and B. Cole (see \cite{Gam}), and to obtain the first explicit bounds for the Beurling--Ahlfors operator. The former was first proved using the method of rotations in \cite{IwaMar}. For some history on norm estimates for the Beurling--Ahlfors transform  motivated by the  celebrated 1982 conjecture of T. Iwaniec \cite{Iwa82} and the current best known bound, see \cite{BanJan} and the  
survey article \cite{Ban}.  

One advantage of the martingale approach is that it immediately extends to geometric and analytic settings well beyond $\R^d$, including the infinite dimensional case of Wiener space and other semigroups such as those of L\'evy processes and discrete Laplacian on groups. The interest on dimension free estimates for Riesz transforms and other operators in harmonic analysis was initiated by the results and questions raised in Stein \cite{SteSome} and Meyer \cite{Mey1}. 
 For some of the now vast literature on dimension free and sharp bounds  for Riesz transforms and Fourier  multipliers in a variety of geometric and analytic settings and many uses of probabilistic tools, we refer the reader to \cite{Ban, BanBog, BBL20, BO15, BO18, OY21, GesMonSak, BanMen,  Gun89, BBLS2021, CarSam, Tor, ADP20, Pet2, Pet3, BanOse1, LiX, NazVol, Ose2, Lus-Piq3, Lus-Piq4,  Nao, Pis, DraVol1, DraVol2, DraVol3, DraVol4, CarDra1, CarDra2, Bak1} and references contained therein.

 \subsection{The probabilistic discrete Hilbert transform on $\Z$}\label{subPDH}
In his celebrated 1928 paper \cite{Riesz}, M.~Riesz solved a problem of considerable interest to the analysis community at the time by showing that the Hilbert transform 
\begin{equation}\label{ConRiesz} 
    Hf(x)=p.v. \frac{1}{\pi}\int_{\R} \frac{f(x-y)}{y} dy, \quad f\in L^p(\bR),
\end{equation} 
is a bounded operator on $L^p(\bR)$, $1<p<\infty$. For some history on this problem and Riesz's solution in 1925 before its publication in 1928, we refer the reader to M.~Cartwright's article ``Manuscripts of Hardy, Littlewood, Marcel Riesz and Titchmarsh,'' \cite{Car1982}. In his paper Riesz also showed that the boundedness of $H$ on $L^p(\bR)$ implies the boundedness of the discrete version $H_{\dis}$ on $\ell^p(\Z)$, where the latter is defined by the convolution with $\{\frac{1}{\pi m}, m\in\Z\setminus\{0\}\}$. That is, 
\begin{equation}\label{disRiesz} 
    H_{\dis}f(n)=\frac{1}{\pi}\sum_{m\in\Z\setminus\{0\}}\frac{f(n-m)}{m},\quad f\in\ell^p(\Z).
\end{equation} 
In fact, Riesz showed that the operator norms satisfy 
\begin{equation}\label{Riesz28}
\|H\|_{L^p\to L^p}\leq  \|H_{\dis}\|_{\ell^p\to \ell^p},  \quad \|H_{\dis}\|_{\ell^p\to \ell^p}\leq C\|H\|_{L^p\to L^p},
\end{equation}
for some constant $C$.

The discrete Hilbert transform was introduced {in 1909} by D.~Hilbert who verified its boundedness on $\ell^2(\Z)$. Proving that the operator norms of $H$ and $H_{\dis}$, $1<p<\infty$,  are the same had been a long-standing open problem motivated in part by an erroneous proof of E.~C.~Titchmarsh in 1926, \cite{Titc26, Titc27}. 

Throughout this paper we will use the notation
\[
p^* = \max\left\{p,\, \frac{p}{p - 1}\right\},\quad 1<p<\infty.
\]
With this notation, the  Pichorides \cite{Pic72} and Cole \cite{Gam} results say that 
\begin{align}\label{pichcole}
\|H\|_{L^p\to L^p}=
\cot\left(\frac{\pi}{2 p^*}\right)=
    \begin{cases}
        \tan\left(\frac{\pi}{2p}\right), & 1<p\le 2 \\
        \cot\left(\frac{\pi}{2p}\right), & 2\leq p<\infty.  
    \end{cases}
\end{align} 
 When $p$ is of the form $2^j$ or ${2^j/(2^j-1)}$, $j \in \N$, the result had been known for several years. For such $p$, it is shown in \cite{Laeng} that \eqref{pichcole} holds for 
 ${H_{\dis}}$.   For further history and references related to this problems, 
 see \cite{Laeng, BanKwa1, HarLitPol, CarSam, Tor, ADP20, HunMukWhe, Pet2, Pet3}.  In \cite{BanKwa}, it is proved that for all $1<p<\infty$, 
\begin{equation}\label{BanKua}
 \|H_{\dis}\|_{\ell^p\to \ell^p}\leq \cot(\pi / (2 p^*))
\end{equation} 
and hence, together with the first inequality in \eqref{Riesz28},  
\begin{align}\label{sharpH-dis}
\|H_{\dis}\|_{\ell^p\to \ell^p}=\|H\|_{L^p\to L^p}.
\end{align} 

The proof of \eqref{BanKua} rests  on a modification of the Gundy--Varopoulos techniques using occupation time formulas as in \cite{Ban86} for the Doob $h$-process where the harmonic function $h$ corresponds to the periodic Poisson kernel. This construction leads to yet a third operator $T_{\bH}$, which we call the \textit{probabilistic discrete Hilbert transform.} 
\begin{align}\label{ProbH}
T_{\bH}f(n)=\sum_{m\in\Z\setminus\{0\}} \bK_{\bH}(m)f(n-m), 
\end{align}
where  
\begin{align}\label{kerneld-1} 
    \bK_{\bH}(m)= \frac{1}{\pi m}\left(1+\int_0^\infty\frac{2y^3}{(y^2+\pi^2 m^2)\sinh^2(y)}\, dy\right).
    \end{align}

Although, as \eqref{kerneld-1} clearly shows, this  construction does not lead to an exact representation of the discrete Hilbert transform, unlike the classical case for the continuous Hilbert transform on $\R$ and the Gundy--Varopoulos extension for Riesz transforms on $\R^d$,  the martingale inequalities from \cite{BanWang} show that for $1<p<\infty$,
\begin{align}\label{ProbH1}
 \|T_{\bH}\|_{\ell^p\to \ell^p}\leq \cot(\pi / (2 p^*)). 
\end{align} 
Furthermore, it is shown in \cite{BanKwa} that there exists a probability kernel $\cP$ on $\Z$ such that 
\begin{align}\label{ConvFormula}
H_{\dis}f(n)=(T_{\bH}\ast \cP)f(n).
\end{align}  
Here $\ast$  denotes the convolution. From this,  
$$
\|H_{\dis}\|_{\ell^p\to \ell^p}\leq  \|T_{\bH}\|_{\ell^p\to \ell^p}\leq \cot(\pi / (2 p^*))
$$
and \eqref{sharpH-dis} follows.   

In this paper we give a new proof of \eqref{ConvFormula}; see Section \ref{FMPDH}, Theorem \ref{BanKwaMain}. The proof is based on  Fourier transform techniques. 

Before discussing the higher dimensional case, we point out that there are other variants of the discrete Hilbert transform on $\Z$. These include the one introduced by Titchmarsh in \cite{Titc26},  called the \textit{Riesz--Titchmarsh operator},  for which the conjecture that their $\ell^p$ norms coincide with that of $H$ remains open.  For a detailed discussion of the different variants of discrete Hilbert transforms and a proof of the conjecture for  $p = 2 n$ or its conjugate $p = \frac{2 n}{2 n - 1}$, $n \in \N$, for the Riesz--Titchmarsh operator, we refer the reader to \cite{BanKwa1}.  While the argument in \cite{BanKwa1} does not use probability, it does use the fact that \eqref{sharpH-dis} holds for  all $1<p<\infty$.  

\subsection{The probabilistic discrete Riesz transforms on $\Z^d$.}\label{SubPDR} In  their celebrated paper \cite{CZ} Calder\'on and Zygmund  remark that Riesz's argument for the second inequality in \eqref{Riesz28} applies to show that  the boundedness of singular integrals on $L^p(\R^d)$ implies the boundedness of their discrete versions defined on $\Z^d$ in the same manner as the discrete Hilbert transform.    More precisely, let 
$$
Tf(z)=p.v.\int_{\R^d}K(z)f(x-z)dz, \quad f:\R^d\to \R 
$$
be a Calder\'on-Zygmund singular integral ($K$ as in \eqref{eq:ConK-def}). Then the discrete operator $T_{\dis}$ defined by 
\begin{align}\label{disCZ} 
T_{\dis}(f)(n)=\sum_{m\in\Z^d\setminus\{0\}}K(m)f(n-m),\quad f: \Z^d \to \R 
\end{align} 
is bounded on $\ell^p(\Z^d)$ and 
$$\|T_{\dis}\|_{\ell^p\to \ell^p}\leq C\|T\|_{L^p\to L^p},
$$  for some constant $C$; see  \cite[pg.~138]{CZ}. In particular, if we consider the Riesz transforms on $\R^d$ 
\begin{equation}\label{classicalCRT-1}
	R^{(k)} f(x) =p.v.\int_{\R^d} K^{(k)}(z)f(x-z)\,dz, \quad k=1,2,\dots d,
\end{equation}
with 
\begin{equation}\label{RieszK} 
    K^{(k)}(z) =c_d \frac{z_{k}}{|z|^{d+1}},  \quad c_d=\frac{\Gamma(\frac{d+1}{2})}{\pi^{\frac{d+1}{2}}}
\end{equation}
and their discrete versions 
\begin{align}\label{DRT-1} 
    R^{(k)}_{\dis}f(n)=c_d\sum_{m\in\Z^d\setminus\{0\}} \frac{m_{k}}{|m|^{d+1}}f(n-m), 
\end{align} 
it holds that for all $1<p<\infty$,  
\begin{align*}
\|R^{(k)}_{\dis}\|_{\ell^p\to \ell^p}\leq C\|R^{(k)}\|_{L^p\to L^p}, \quad 1<p<\infty. 
\end{align*} 
for some constant $C$.

These operators are the classical extensions of the Hilbert transform to several dimensions. It is shown in \cite{IwaMar} using the method or rotations, and in  \cite{BanWang} using martingale inequalities, that  
\begin{align}\label{SCRiesz} 
 \|R^{(k)}\|_{L^p\to L^p}=\|H\|_{L^p\to L^p}=\cot(\pi / (2 p^*))
 \end{align} 
From  Proposition \ref{thm:RCZ} below it follows that 
\begin{align}\label{prop1}
\|R^{(k)}_{\dis} \|_{\ell^p\to \ell^p}\leq \|R^{(k)}\|_{L^p\to L^p}+C_d=\cot(\pi / (2 p^*))+C_d, 
\end{align}
where $C_d$ is a constant depending only on $d$. Furthermore, in Theorem  \ref{homeker} a sharp lower bound for singular integrals with homogeneous kernels is proved from which it follows that 
\begin{align}\label{prop2}
\cot(\pi / (2 p^*))=\|R^{(k)}\|_{L^p\to L^p}\leq \|R^{(k)}_{\dis} \|_{\ell^p\to \ell^p}.
\end{align}
This  gives  
\begin{align}\label{SharpDiscRiesz} 
\cot(\pi / (2 p^*))\leq \|R^{(k)}_{\dis} \|_{\ell^p\to \ell^p}
\leq \cot(\pi / (2 p^*))+C_d
\end{align}

Given the success of  probabilistic techniques in deriving sharp estimates for Riesz transforms in general geometric settings, as discussed above, together with the interest on discrete analogues of many classical operators in harmonic analysis as studied in \cite{MSW, Pie1, Pie2, Pie3, Pie 4, Pie5, Pierce, SW2000, SteWai99} (and many other references contained therein), together with the results in \cite{BanKwa},  the following questions naturally arise:  
\begin{enumerate}
\item 
Can the construction of the probabilistic operators in \cite{BanKwa} be carried out in higher dimension to obtain a collection of operators on $\Z^{d}$, $d> 1$, which are closely related to the classical Riesz transforms in \eqref{DRT-1}, that have $\ell^p(\Z^d)$-norms independent of $d$ and that could be used to show that $\|R_{\dis}^{(k)}\|_{\ell^p\to\ell^p}$ is independent of $d$ or even that it equals $\cot(\pi / (2 p^*))$, as suggested by \eqref{SharpDiscRiesz}?  
\item 
Do the probabilistic  discrete operators arise as the discretization of Calder\'on-Zygmund singular integrals on $\R^d$--as is the case of \eqref{DRT-1}?   
\item Given the important role of the method of rotations in obtaining sharp or near sharp estimates for Riesz transforms and related singular integrals on  $\R^d$ (see for example \cite{IwaMar}), it is natural to ask:  is there a discrete ``method of rotations'' that would reduce the $\ell^p(\Z^d)$ boundedness of $R_{{\dis}
}^{(k)}$ to that of $H_{{\dis}}(\Z)$? \end{enumerate}

The goal of this paper is to address these questions as we now explain.  The periodic Poisson kernels $ h(x,y)$, $x\in \R^d, y>0$, is obtained by summing the translations of  the classical Poisson kernel $p_n(x,y)=p(x-n,y)$ over $n\in \Z^d$ as in \eqref{doob-h}.  Using this positive harmonic function we construct a Doob $h$-process in the upper half-space which exits only on the lattice $\Z^d$. Taking conditional expectations of martingale transforms of stochastic integrals built with this process, we construct a large collection of operators on $\Z^{d}$, denoted by $T_A$, where $A$ is any  $(d+1)\times (d+1)$ matrix with variable entries and finite quadratic norm $\|A\|$ as defined in \eqref{normofA}.   Applying the martingale inequalities in \cite{Bur84} and \cite{BanWang}, leads to the operator bounds $\|T_A\|_{\ell^p\to \ell^p}\leq \|A\|(p^*-1)$ for all $A$ and $ \|T_A\|_{\ell^p\to \ell^p}\leq \|A\|\cot(\pi/(2 p^*))$,  if in addition $A$ has the property that $Av$ is orthogonal to $v$, for all $v\in \R^{d+1}$.  The kernels for the  $T_A$'s, (not necessarily of convolution type) are computed in Theorem \ref{thm:TaIntRep}.    
Of particular interest in this construction is the case of the sequence of matrices  $\bH^{(k)}=(a^{(k)}_{ij})$, $k=1, 2, \dots d$, given by 
\begin{equation}\label{HRiesz}
	a^{(k)}_{ij}=
	\begin{cases}
	    -1, & i=k, j=d+1\\
		1, & i=d+1, j=k\\
		0, & \text{otherwise}.  
	\end{cases}
\end{equation}
These matrices have the orthogonality property, $\bH^{(k)}v\cdot v=0$ for all $v\in \R^{d+1}$, and $\|\bH^{(k)}\|\leq 1$.  Hence the corresponding operators $T_{\bH^{(k)}}$ satisfy 
\begin{align}\label{THUB}
\|T_{\bH^{(k)}}\|_{\ell^p\to\ell^p}\leq \cot(\pi / (2 p^*)), \quad k=1, 2, \dots, d. 
\end{align} 
 In this case the kernels (see \eqref{byparts} in the proof of Theorem \ref{DisProbVSDiscRies}) are given by 
\begin{eqnarray}\label{ProbDisR} 
\bK_{\bH^{(k)}}(m)
    =-4\int_{\R^{d}}\int_{0}^{\infty}\frac{1}{h(x, y)}\parder{{p_0}
}{x_k}(x, y)\parder{}{y}(yp_m)(x, y)\,dydx,
 \end{eqnarray}
and 
\begin{equation}
    T_{\bH^{(k)}}(f)(n)=\sum_{m\in\Z^d\setminus\{0\}}\bK_{\bH^{(k)}}(n-m)f(m).
\end{equation}

The choice of the $\bH^{(k)}$'s  is motivated  by the fact that using the  standard Brownian motion in $\R^{d+1}_{+}$ killed upon exiting on $\R^d$,  these are the matrices used for the probabilistic representation of the classical Riesz transforms. This was originally done by Gundy and Varopoulos \cite{GV79} (see Section \ref{Conjecture} below) and subsequently by many other authors in different geometric settings.  Indeed, the Gundy-Varopoulos construction  gives operators whose kernels are as in \eqref{ProbDisR} with $h$ replaced by 1 and $n\in \Z^d$  by $z\in \R^d$. In fact,  
\begin{align}
-4\int_{\R^{d}}\int_{0}^{\infty}\parder{p_0(x, y)}{x_k}\parder{}{y}(yp_z(x, y))\,dydx=\frac{c_d z_k}{|z|^{d+1}}, \quad z\in \R^d\setminus\{0\}, 
\end{align} 
where as before $p_z(x, y)=p(x-z, y)$ is the Poisson kernel. For this computation, see \eqref{h=1case} below.

We call the operators $T_{\bH^{(k)}}$, $k=1, 2, \dots, d$, the  \textit{probabilistic discrete  Riesz transforms}. When $d=1$ there is only one matrix $\bH^{(1)}$ that we simply denote by $\bH$ and the corresponding operator  $T_{\bH}$  is the \textit{probabilistic discrete Hilbert transform  \eqref{ProbH}} studied in \cite{BanKwa}. In this case the kernel can be further simplified  to \eqref{kerneld-1}. This simplification is due to the fact that when $d=1$ we have the simple closed formula for periodic Poisson kernel  $h$ given by  \eqref{Explicitd=1}. This gives a very nice and quite simple formula for $1/h$ (see \eqref{hinverse}) from which many explicit computations can be done. Such a closed formula is not available for $d>1$.  Instead, various properties of the function $h(x, y)$ (as in Lemma \ref{lem:hlimit}) are used for many of the computations in this paper. 

 While at present there is no analogue to \eqref{ConvFormula} for $d>1$ (see Question \ref{convZd} below), after a scaling and limiting arguments (Lemmas \ref{lem:DiscreteAsymptotics} and  \ref{lem:DiscreteApproximation}), the probabilistic Riesz transforms are closely related to the classical Riesz transforms.  This relationship can be exploited to prove the lower bound 
\begin{align}\label{THLB} 
\cot(\pi / (2 p^*))\leq \|T_{\bH^{(k)}}\|_{\ell^p\to\ell^p}. 
\end{align} 
This together with \eqref{THUB} shows that the $\ell^p$-norm of $T_{\bH^{(k)}}$ are not only independent of the dimension but equal the $L^p$-norm of $R^{(k)}$, see Theorem  \ref{bestpro-dis-riesz}.  

Given our discussion above the following kernels on $\R$ and $\R^d$, $d>1$, respectively, naturally arise:  
\begin{align}
\bK_{\bH}(z)=\frac{1}{\pi z}\left(1+\int_0^\infty\frac{2y^3}{(y^2+\pi^2 z^2)\sinh^2(y)}\, dy\right)\,\ind_{\{|z|\geq 1\}}(z)+\frac{1}{\pi z}\ind_{\{|z|<1\}}(z),\quad z\in \R\setminus\{0\}, 
\end{align}
\begin{align}
    \bK_{\bH^{(k)}}(z)
	&=\left(-4\int_{\R^{d}}\int_{0}^{\infty}\frac{1}{h(x, y)}\parder{{p_0} }{x_k}\parder{}{y}(yp_z)\,dydx\right)\ind_{\{|z|\geq 1\}}(z)\\
	&+c_d\frac{z_k}{|z|^{d+1}}\ind_{\{|z|<1\}}(z), \quad  z\in \R^d\setminus\{0\}, \,\,\, d>1, \,\,\, k=1, 2, \dots,  d.\nonumber
\end{align}
It is proved in Section \ref{ProbContRiesz}, Theorem \ref{hormander},  that these kernels satisfy the H\"ormader condition.  The principal value convolution operators  $Tf(z)=\bK_{\bH}\ast f (z)$ and $T^{(k)}f(z)=\bK_{\bH^{(k)}}\ast f (z)$ are bounded on $L^p(\R)$ and $L^p(\R^d)$, $d>1$, $1<p<\infty$.   Furthermore,  it is proved in Section \ref{ProbContRiesz} (see Theorem \ref{LpCont}) that 
\begin{align*}
\cot(\pi/(2p^\ast))\leq \|T\|_{L^p\to L^p} &\leq \cot(\tfrac{\pi}{2p^{\ast}})+0.09956,
\end{align*}
and 
\begin{align*}
\cot(\pi/(2p^\ast))\leq \|T^{(k)}\|_{L^p\to L^p} & \leq\cot(\tfrac{\pi}{2p^{\ast}})+ C_d
\end{align*} 
 where $C_d$ depends only on $d$.  
 
The probabilistic discrete Hilbert and Riesz transforms on the lattice $\Z^d$ are the discretization of these Calder\'on-Zygmund operators on $\R^d$. We call the operators $T$ and $T^{(k)}$ the {\it probabilistic continuous Hilbert and Riesz transforms}, respectively.   We conjecture (Conjecture \ref{conticon}) that their  $L^p$-norms are not only independent of the dimension but that in fact they are equal to $\cot(\pi/(2p^\ast))$. 
 
 \begin{remark} It is important to note that these discrete singular integrals  \'a la Hilbert, Riesz and  Calder\'on and Zygmund,  do not arise as ``genuine'' Riesz transforms of semigroups associated with discrete or semi-discrete Laplacians for which many results exist and to which the ``usual'' Gundy--Varopoulos construction as outlined in Section \ref{Conjecture} applies. For some of these applications we refer the reader to  \cite{Pet2, ArcDomPet2, ADP20}.
\end{remark}

Concerning  a ``method of rotations'' in the discrete setting, Section~\ref{MethodRotations} presents an analogue motivated by differently defined discrete Riesz transforms $\drt^{(k)}$ on $\Z^d$.   It gives a representation of these discrete Riesz transforms in terms of the discrete Hilbert transform $H_{\dis}$ from which  the sharp estimates for their norms is obtained, Theorem \ref{thm:rot:higher}.

We summarize the main results of this paper in the following  Theorem.  

\begin{theorem}\label{summary} 
Let $R^{(k)}$ be the classical Riesz transforms in \eqref{classicalCRT-1}, $R_{\dis}^{(k)}$ the discrete Riesz transforms in \eqref{DRT-1}, $T_{\bH^{(k)}}$ the probabilistic discrete Riesz transforms in \eqref{ProbDisR} and $\drt^{(k)}$ the Riesz transforms obtained by the method of rotations in \eqref{MethodRot}. Fix $1<p<\infty$.   
\begin{itemize}
\item[(i)] For 
 $d\geq 2$ and $k=1, \dots, d$, 
\begin{align}
   \|T_{\bH^{(k)}}\|_{\ell^p\to \ell^p}=\|\drt^{(k)}\|_{\ell^p\to \ell^p}= \|R^{(k)}\|_{L^p\to L^p}=\cot(\tfrac{\pi}{2p^*}),  
\end{align}
and 
\begin{align}
   \cot(\tfrac{\pi}{2p^*})\leq \|R_{\dis}^{(k)}\|_{\ell^p\to \ell^p}\leq \cot(\tfrac{\pi}{2p^{\ast}})+C_d. 
  \end{align}
   The constant $C_d$ depends only on $d$.
    \item[(ii)] The singular integral operators $T$ and $T^{(k)}$ on $\R$ and $\R^d$, $d>1$,  with kernels $\bK_{\bH}$ and $\bK_{\bH^{(k)}}$, satisfy the H\"ormader condition. Further, 
 \begin{align}\label{normT}
\cot(\pi/(2p^\ast))\leq \|T\|_{L^p\to L^p} &\leq \cot(\pi/(2p^\ast))+0.09956
\end{align} 
and 
\begin{align}\label{normTk}
\cot(\pi/(2p^\ast))\& \leq \|T^{(k)}\|_{L^p\to L^p} & \leq\cot(\tfrac{\pi}{2p^{\ast}})+C_d
\end{align} 
 The constant $C_d$ depends only on $d$.

\item[(iii)]  
 When $d=1$, 
 \begin{align}
 R^{(1)}=H,\quad  R_{\dis}^{(1)}=\drt^{(1)}=H_{\dis}, \quad T_{\bH^{(1)}}=T_{\bH}. 
 \end{align}  The $p$-norm of these operators all equal $\cot(\tfrac{\pi}{2p^*})$.
  
\end{itemize} 
\end{theorem} 

\begin{remark}
We note that (iii) is the  content of \cite{BanKwa}.  We state it here to more clearly give a global picture for all $d\geq 1$. The contribution of this paper to (iii) is the new proof of the key identity \eqref{ConvFormula} based on Fourier transform techniques which has the potential for extending to higher dimensions.  (See also Remark \ref{byparts} for a derivation of \eqref{kerneld-1}.)
This motivates Question \ref{convZd}. If this question is  answered in the affirmative it would resolve Conjecture \ref{SharpRiesz}.  
\end{remark}

It is perhaps interesting  to end this introduction with the following insightful comment from Stein \cite{SteSome} who ends his paper with the remark: \begin{quote} {\it ``The above results raise the following general question. Can one find an appropriate infinite-dimensional formulation of (that part of) harmonic analysis in $\R^n$ which displays in a natural way the above uniformity in $n$? A related question is to study the ``limit as $n\to \infty$'' of the above results, insofar as such limits may have a meaning. One might guess that a further understanding of these questions would involve, among other things, notions from probability theory: i.e. Brownian motion and possibly some variant of the central limit theorem.''}\end{quote}   

The now vast literature on probabilistic techniques for problems related to this topic, some of it cited in this paper, proves that his ``guess''
was indeed correct. 
   
\section{Organization of the paper}
\begin{itemize}[leftmargin=*]
\item Section \ref{PerPK} introduces the \textit{periodic Poisson kernel} $h$ on $\R^d$, $d\geq 1$, from which we will define the Doob $h$-process used throughout the paper and derive some of its basic properties.  
\vskip.05in 

\item Section \ref{DisHarExt} defines the $h$-harmonic extension of the function $f:\Z^d\to \R$ and discusses a connection to an interesting problem of Magyar, Stein, and Wainger in ~\cite{MSW}, see Remark \ref{MagSteWeRMK1}.
 \vskip.05in 

\item Section \ref{general-d} defines the Doob $h$-process associated with the function $h$, the martingale transforms and recalls the relevant martingale inequalities. 

\item Section \ref{Martranpro} defines the projection operators, denoted by $T_{A}$, as the conditional expectations of martingale transforms. {Theorems \ref{thm:TaIntRep} and \ref{Proj}} compute their kernels and prove their boundedness on $\ell^p(\Z^d)$ with the same bounds as those in the martingale inequalities.
\vskip.05in 

\item Section \ref{CZO} considers the discrete Calder\'on--Zygmund operators given by \eqref{disCZ} and proves their boundedness properties.  This is done in Proposition \ref{thm:RCZ} and Theorem \ref{homeker}. From these, (ii) of Theorem \ref{summary} follows. {Conjecture \ref{SharpRiesz}}, and the weaker {Problem \ref{WeakerProb}}, on the norm of the discrete Riesz transforms are formulated in this section. 
\vskip.05in 

\item Section \ref{PDR} shows that the  $\ell^p(\Z^d)$-norms of the  probabilistic discrete Riesz transforms on $\Z^d$ are the same as the $L^p(\R^d)$-norms of the classical Riesz transforms.  This is done in Theorem \ref{bestpro-dis-riesz}. From this it follows that the $p$-norms of the classical Riesz transforms on $L^p(\R^d)$, the classical Hilbert transform on $L^p(\R)$, the discrete Hilbert transform on $\ell^p(\Z)$, and probabilistic discrete Riesz transforms on $\ell^p(\Z^d)$ are all equal to $\cot(\pi / (2 p^*))$, verifying  (i) in Theorem \ref{summary} for all the the operators with the exception of $\drt^{(k)}$. 
\vskip.05in

\item Section \ref{FMPDH} computes the Fourier transform of the probabilistic discrete Hilbert transform. This allows for a new proof of the ``Key Lemma 1.3'' in \cite{BanKwa} which shows that the discrete Hilbert transform $H_{\dis}$ is the convolution of the probabilistic discrete Hilbert transform with a probability kernel, see Theorem \ref{BanKwaMain}. The proof here is  based on the Fourier transform techniques. The natural question for $\Z^d$, $d\geq 2$ is stated at the end of this section, see  Question \ref{convZd}. 
\vskip.05in 

\item Section \ref{ProbContRiesz} shows that replacing the discrete variable $n\in \Z^d$ by the continuous variable $z\in \R^d$ in the kernel for the probabilistic discrete Riesz transform{s}, and after a modification near zero that does  not affect the discrete operator, give singular integral operators  satisfying the H\"ormader condition and are bounded on $L^p(\R^d)$, $1<p<\infty$ with their norms satisfying \eqref{normT} and \eqref{normTk}.  See Theorems \ref{almostCZ}, \ref{LpCont}, \ref{hormander}, and Corollary \ref{ContDecom}.
\vskip.05in 

\item Section \ref{MethodRotations} discusses a {``method of rotation''} by constructing certain discrete Riesz transforms on $\Z^d$ motivated by the classical ones and verifying that these Riesz transforms have the same norms as the discrete Hilbert transform $H_{\dis}$ and the probabilistic discrete Riesz transforms $T_{\bH^{(k)}}$, see Theorem \ref{thm:rot:higher}. 
\vskip.05in

\item Section \ref{Numerical} presents some numerical calculations comparing the relative sizes of the kernels for the discrete Riesz transforms, the probabilistic discrete Riesz transforms and the discrete Riesz transforms constructed in the method of rotations.  
\end{itemize}  

\section{Notation} 
The Fourier transform of a function $f$ on $\R^d$ is denoted by $\wh{f}$, where 
$$ \wh{f}(\xi)=\int_{\R^{d}}f(x)e^{-2\pi i x\cdot \xi}dx \quad \text{for }\xi \in\R^d. $$ 
For a function $f:\Z^d\to\R$, the Fourier transform is denoted by $\cF(f)$, where
\begin{align*}
    \cF(f)(\xi) = \sum_{n\in\Z^d}f(n)e^{-2\pi i n\cdot \xi} \quad \text{for } \xi\in Q:=[-\tfrac12, \tfrac12)^d.  
\end{align*}
Here, $Q$ is often called the fundamental cube. 

The standard notations $\|f\|_{L^p}$ and $\|f\|_{\ell^p}$ are used for the $p$-norm of functions in $L^p(\R^d)$ and $\ell^p(\Z^d)$, respectively. $\|T\|_{L^p\to L^p}$ will denote the operator norm of $T:L^p(\R^d) \to L^p(\R^d)$, and  similarly $\|T\|_{\ell^p\to \ell^p}$ for the  operator norm of $T:\ell^p(\Z^d)\to \ell^p(\Z^d)$. 

The gradient and Laplacian of functions $u(x, y)$ on the upper half-space $\R^d \times \R_+=\{(x, y): x\in \R^d, y>0\}$ are denoted by 
$$ \nabla u 
=\left(\frac{\partial u}{\partial x_1}, \cdots, \frac{\partial u}{\partial x_d}, \frac{\partial u}{\partial y}\right)
=\left(\nabla_{x}u, \,\, \parder{u}{y}\right) $$
and 
$$ \Delta u 
=\sum_{i=1}^d \frac{\partial^2 u}{\partial x{_i}^2}+\frac{\partial^2 u}{\partial y^2}=\Delta_{x}u+\parder{u^{2}}{y^{2}}, $$ 
respectively. By abuse of notation, for $f:\R^d\to \R$ we will still use $\Delta f=\sum_{i=1}^d \frac{\partial^2 f}{\partial x{_i}^2}$ to denote its Laplacian on $\R^d$. 

Throughout the paper, 
$C_1, C_2, \ldots$ or $C_d$ will denote constants that depend only on $d$ and whose value may change from line to line. 

\section{Preliminaries}
\subsection{The periodic Poisson kernel}\label{PerPK}
Let $d\geq 1$. The Poisson kernel for the upper half-space $\R^d \times \R_+$ is given by
\begin{align}\label{PoissonK}
    p(x,y) =\frac{c_{d}y}{(|x|^{2}+y^{2})^{\frac{d+1}{2}}}, \quad (x, y)\in\R^{d}\times \R_{+}, \quad c_d=\Gamma{\biggl(}\frac{d+1}{2}{\biggr)}\pi^{-\frac{d+1}{2}}.
\end{align}
For $x,z\in\R^{d}$ and $y\in \R_{+}$, we set $p_z(x,y)=p(x-z,y)$. Since $\Delta p_{z}(x,y)=0$ and $p_{z}(x,y)> 0$ for all $(x,y)\in\R^{d}\times \R_{+}$ and $z\in \R^d$, we see that the function $h$ defined by 
\begin{equation}\label{doob-h}
    h(x,y)=\sum_{n\in \Z^d}p_{n}(x,y)
\end{equation}
is also positive and harmonic. In addition, it is periodic in $x$ in the sense that $h(x+m,y)=h(x,y)$ for all $m\in\Z^d$. We call the function $h(x, y)$ the periodic Poisson kernel. The following properties of $h(x, y)$ will be used frequently in the sequel. 

\begin{lemma}\label{lem:hlimit}
We have $\lim_{y\to\infty}h(x,y)=1$ uniformly in $x\in\R^{d}$. In particular, for each $y_0>0$, there exist constants $C_1, C_2>0$ such that $C_1\leq h(x,y)\leq C_2$, for all $x\in\R^{d}$ and $y\geq y_0$. 
\end{lemma}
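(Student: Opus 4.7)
The plan is to identify $h(x,y)$ as a periodic function whose Fourier series we can write down explicitly via the Poisson summation formula, and then read off the large-$y$ behavior from the exponential decay of the Fourier coefficients. Concretely, for each fixed $y>0$ the Poisson kernel $p(\cdot,y)$ lies in the Schwartz class and its Fourier transform is the well-known expression
\[
\widehat{p(\cdot,y)}(\xi)=e^{-2\pi y|\xi|},\qquad \xi\in\R^{d}.
\]
Since $p(\cdot,y)$ decays like $y|x|^{-(d+1)}$, Poisson summation applies and yields
\[
h(x,y)=\sum_{n\in\Z^{d}}p(x-n,y)=\sum_{k\in\Z^{d}}e^{-2\pi y|k|}e^{2\pi i k\cdot x}.
\]

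From this identity the uniform limit is immediate: separating the $k=0$ term, for any $y\geq 1$,
\[
|h(x,y)-1|\leq\sum_{k\in\Z^{d}\setminus\{0\}}e^{-2\pi y|k|}\leq e^{-\pi y}\sum_{k\in\Z^{d}\setminus\{0\}}e^{-\pi y|k|}\leq C_d\,e^{-\pi y},
\]
where in the last step I used $|k|\geq 1$ for $k\in\Z^{d}\setminus\{0\}$ together with the fact that $\sum_{k}e^{-\pi|k|}<\infty$. This bound is independent of $x$, so $h(x,y)\to 1$ uniformly on $\R^{d}$.

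For the two-sided bound on the strip $\{y\geq y_{0}\}$, I combine the uniform limit with continuity and periodicity. The uniform limit gives some $Y\geq y_{0}$ with $\tfrac{1}{2}\leq h(x,y)\leq\tfrac{3}{2}$ for all $x\in\R^{d}$ and all $y\geq Y$. On the complementary set $[-\tfrac12,\tfrac12]^{d}\times[y_{0},Y]$ the function $h$ is continuous and strictly positive (it is the sum of positive continuous functions; continuity of the sum follows from dominated convergence using the bound $p(x-n,y)\leq C_{y_{0}}(1+|n|)^{-(d+1)}$ valid uniformly for $x$ in the fundamental cube and $y\in[y_{0},Y]$), so it attains a positive minimum and a finite maximum there. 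Periodicity of $h$ in $x$ transfers these bounds to all $x\in\R^{d}$, and taking $C_{1}$ and $C_{2}$ to be the smaller/larger of the two pairs of bounds finishes the argument.

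The only step with any subtlety is verifying the hypotheses of Poisson summation (i.e., that the two sums in the formula converge absolutely and coincide pointwise). This follows routinely from the fast decay of $p(\cdot,y)$ and the exponential decay of $\widehat{p(\cdot,y)}$, so I do not expect a genuine obstacle; if one prefers to avoid invoking Poisson summation, the same conclusion can be reached by comparing the sum to the integral $\int_{\R^{d}}p(x-z,y)dz=1$ through a Riemann-sum error estimate using the gradient bound $|\nabla_{x}p(x,y)|\leq Cy(|x|^{2}+y^{2})^{-(d+2)/2}$, but the Fourier-side approach is cleaner and gives the stronger exponential rate $e^{-\pi y}$ for free.
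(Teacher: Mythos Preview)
Your proof is correct but takes a different route from the paper. The paper argues on the spatial side: it writes $1=\sum_{n}\int_{Q}p_{n}(x-z,y)\,dz$ and estimates each difference $p_{n}(x,y)-\int_{Q}p_{n}(x-z,y)\,dz$ via the gradient bound $|\nabla_{x}p(x,y)|\leq C\,y(|x|^{2}+y^{2})^{-(d+2)/2}$, arriving at $|h(x,y)-1|\leq C/y$. This is precisely the Riemann-sum alternative you sketch at the end. Your Fourier-side argument via Poisson summation is cleaner and yields the stronger exponential rate $|h(x,y)-1|\leq C_{d}e^{-\pi y}$ (for $y\geq 1$); indeed the paper records the identity $h(x,y)=\sum_{k}e^{-2\pi y|k|}e^{2\pi ik\cdot x}$ immediately \emph{after} this lemma, so your approach essentially reorders the presentation. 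The tradeoff is that the paper's estimate is self-contained (no appeal to the Fourier transform of the Poisson kernel or to Poisson summation), whereas yours imports those facts up front but rewards you with the sharper decay. Both versions of the second assertion (compactness plus periodicity) are identical.
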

\begin{proof}
Recall that  $Q={[-\frac12, \frac12)^d}
.$ For $x\in\R^{d}$ and $y>0$, we have
\begin{equation*}
	1 =\int_{\R^{d}}p(x-z,y)\,dz =\sum_{n\in \Z^d}\int_{n+Q}p(x-z,y)\,dz =\sum_{n\in \Z^d}\int_{Q}p_{n}(x-z,y)\,dz.
\end{equation*}
We will estimate the quantity
\begin{equation*}
	p_{n}(x,y) - \int_{Q}p_{n}(x-z,y)\,dz = \int_{Q}(p_{n}(x,y) - p_{n}(x-z,y))\,dz .
\end{equation*}
All constants $C_1, C_2, \ldots$ below are positive and depend only on $d$. Observe that we have
\begin{equation*}
	|\nabla_x p(x, y)| \leq C_1 \frac{y}{(|x|^2 + y^2)^{\frac{d}{2} + 1}}
\end{equation*}
for all $(x, y) \in \R^{d} \times \R_{+}$. Furthermore, if $x \in \R^{d}$, $y \geq 1$, $z \in Q$ and $n \in \Z^{d}$, then
\begin{equation*}
	|x-n-z|^2 + y^2 \geq C_2 (|x - n|^2 + y^2) .
\end{equation*}
It follows that
\begin{align*}
	|\nabla_x p_n(x-z, y)|
	& \leq C_1 \, \frac{y}{(|x-n-z|^2 + y^2)^{\frac{d}{2} + 1}} \\
	& \leq C_3 \, \frac{y}{(|x - n|^2 + y^2)^{\frac{d}{2} + 1}} \\
	& \leq C_4 \, \frac{p_n(x, y)}{(|x - n|^2 + y^2)^{\frac{1}{2}}} \leq C_4 \, \frac{p_n(x, y)}{y} \, .
\end{align*}
Thus,
\begin{equation*}
    |p_{n}(x,y) -p_{n}(x-z,y)| \leq C_4 \, \frac{p_n(x, y)}{y} \times |z| \leq C_5 \, \frac{p_n(x, y)}{y} .
\end{equation*}
We conclude that
\begin{align*}
	|h(x,y)-1|
	& = \Abs{\sum_{n\in \Z^d} \int_{Q}(p_{n}(x,y)dz-p_{n}(x-z,y)) dz} \\
	& \leq \sum_{n\in \Z^d} \int_{Q}|p_{n}(x,y)dz-p_{n}(x-z,y)| dz \\
	& \leq \sum_{n\in \Z^d} \int_{Q} C_5 \, \frac{p_n(x, y)}{y} \, dz = \frac{C_5 h(x,y)}{y} \, .
\end{align*}
This proves the first statement of the lemma. The second assertion (ii) follows from (i) and the fact that $h(x,y)$ is positive, continuous, and periodic in $x$.
\end{proof}

Clearly, $h(x, y) \geq p(x, y)$, and thus for some constant $C_1$ depending only on $d$ we have $h(x, y) \geq C_1 y$ whenever $x \in Q$ and $y \in (0, 1)$. Since $h(x, y)$ is periodic in $x$ with period $1$, the same estimate holds for all $x \in \R^{d}$, and by combining this inequality with Lemma~\ref{lem:hlimit}, we find that
\begin{equation}\label{eq:hupperbound1}
    h(x, y) \geq C_2 \min\{1, y\}
\end{equation}
for all $(x, y) \in \R^{d} \times \R_{+}$. Similarly, we have $h(x, y) \geq p(x, y) \geq C_3 \frac{y}{\sqrt{x^2 + y^2}}$ when $x^2 + y^2 < 1$ and $y \in (0, 1)$. For all other $(x, y)$ the last estimate is weaker than~\eqref{eq:hupperbound1} (up to a constant factor), so we conclude that
\begin{equation}\label{eq:hupperbound2}
    h(x, y) \geq C_4 \, \frac{y}{\sqrt{x^2 + y^2}}
\end{equation}
holds for all $(x, y) \in \R^{d} \times \R_{+}$.

We recall the Poisson summation formula.  
\begin{proposition}[\cite{Graf}*{Theorem 3.1.17}]
Suppose that $f,\wh{f}\in L^{1}(\R^{d})$ and 
\begin{equation*}
    |f(x)|+|\wh{f}(x)|\leq C(1+|x|)^{-n-\delta}
\end{equation*}
for some $C,\delta>0$. Then $f$ and $\wh{f}$ are continuous and 
\begin{equation}\label{PoissSum}
    \sum_{n\in\Z^{d}}f(x+n)=\sum_{n\in\Z^{d}}\wh{f}(n)e^{2\pi i n\cdot x}
\end{equation}
for all $x\in\R^{d}$.
\end{proposition}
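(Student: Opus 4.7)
The plan is to identify the left-hand side of \eqref{PoissSum} with a continuous $\Z^{d}$-periodic function, compute its Fourier coefficients on the torus, and then invoke Fourier uniqueness to recover the right-hand side.

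First, the continuity of $f$ and $\wh{f}$ is essentially built in: since $\wh{f} \in L^{1}(\R^{d})$, Fourier inversion produces a continuous representative of $f$, and symmetrically for $\wh{f}$. I would then define $F(x) = \sum_{n \in \Z^{d}} f(x+n)$ and use the decay bound $|f(y)| \leq C(1+|y|)^{-d-\delta}$ to construct a summable majorant $\sum_{n \in \Z^{d}} C(1+|x+n|)^{-d-\delta}$ that is bounded uniformly for $x$ in any compact set. The Weierstrass M-test then delivers absolute and uniform convergence on compacts, so $F$ is continuous; by construction $F$ is $\Z^{d}$-periodic.

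Second, I would compute the Fourier coefficients of $F$ viewed as a function on the torus $\R^{d}/\Z^{d}$, integrating over the fundamental cube $Q = [-\tfrac{1}{2},\tfrac{1}{2})^{d}$. Using uniform convergence on $Q$ to swap sum and integral, and then translating each piece by $n \in \Z^{d}$ (exploiting $e^{-2\pi i k \cdot n} = 1$ for $k \in \Z^{d}$), the $k$-th coefficient collapses to the single integral $\int_{\R^{d}} f(y) e^{-2\pi i k \cdot y}\, dy = \wh{f}(k)$.

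Third, the decay hypothesis on $\wh{f}$ yields $\sum_{k \in \Z^{d}} |\wh{f}(k)| < \infty$, so the series $G(x) := \sum_{k \in \Z^{d}} \wh{f}(k) e^{2\pi i k \cdot x}$ converges absolutely and uniformly, defining a continuous $\Z^{d}$-periodic function whose Fourier coefficients on the torus are manifestly $\wh{f}(k)$. Since $F$ and $G$ are continuous on the torus and share the same Fourier coefficients, uniqueness of Fourier expansions (for instance via Fej\'er kernel convergence and density of trigonometric polynomials in $C(\R^{d}/\Z^{d})$) forces $F = G$, which is precisely \eqref{PoissSum}.

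The main obstacle is arranging the convergence so that the Fubini-type interchange in the computation of $\wh{F}(k)$ is rigorously justified; this is exactly where the combined $L^{1}$ and pointwise decay assumptions on $f$ are genuinely needed, and dually where the decay on $\wh{f}$ ensures that the right-hand side is a bona fide continuous function. Once both series converge absolutely, the identification $F = G$ reduces to standard Fourier uniqueness on the compact group $\R^{d}/\Z^{d}$.
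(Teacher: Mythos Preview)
Your proposal is correct and is the standard argument for the Poisson summation formula. Note, however, that the paper does not supply its own proof of this proposition: it is stated with a citation to Grafakos, \emph{Classical Fourier Analysis}, Theorem 3.1.17, and used as a black box. The proof you outline is essentially the one given there, so there is nothing to compare.
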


Applying to the function $f(x)=p(x,y)$ for which $\wh{f}(n)=e^{-2\pi |n|y}$, we obtain that 
\begin{align}\label{FourierInv} 
	h(x,y) = \sum_{n\in\Z^{d}}p_0(x+n,y) = \sum_{n\in\Z^{d}}e^{-2\pi |n|y}e^{2\pi in\cdot x}. 
\end{align}
From the fact that the Poisson kernel for the unit disc $\cP_{r}(\theta)$ is given by 
\begin{align*}
	\cP_{r}(\theta) = \sum_{n\in\Z}r^{|n|}e^{in\theta} = \frac{1-r^{2}}{1+r^{2}-2r\cos\theta},\quad 0\leq r\leq 1,\quad  0\leq \theta\leq 2\pi, 
\end{align*}
we see that for $d=1$, 
\begin{align}\label{Explicitd=1}
	h(x,y) = \frac{1}{2\pi}\cP_{e^{-2\pi y}}(2\pi x) =\frac{\sinh(2\pi y)}{\cosh(2\pi y)-\cos (2\pi x)}.
\end{align}
This explicit expression for $h$ when $d=1$ is well known, see for example \cite{IwaKow}*{pg.70}. It was derived in \cite{BanKwa}*{Lemma 3.1} by a different argument and used there for many calculations. In particular, for $d=1$, such a formula permits explicit calculations for various quantities involving  the function $\frac{1}{h(x, y)}$, see \cite{BanKwa}. In Section \ref{FMPDH} we will use this in the calculation of the Fourier transform of the kernel $\bK_{\bH}$ for the probabilistic discrete Hilbert transform. For $d\geq 2$, while we can express $h(x, y)$ in various other forms besides \eqref{doob-h} and \eqref{FourierInv}, it does not seem possible to write such a convenient closed formula that will facilitate calculations with $\frac{1}{h(x, y)}$ in a similar manner.  

\subsection{$h$-harmonic extension}\label{DisHarExt}
Let $f:\Z^d \to\R$ be a function of compact support, that is, $f(n)=0$ all but finitely many $n\in \Z^d$. Define  
\begin{align}\label{eq:def_harmonic}
	u_{f}(x,y)=\sum_{n\in \Z^d}f(n)\frac{p_{n}(x,y)}{h(x,y)}.
\end{align}
Note that $u_{f}$ is $h$-harmonic.  That is, $\Delta(hu_f)=0$. Equivalently, $u_f(x, y)$ is harmonic in the upper half-space relative to the operator 
\begin{align*}
	\frac{1}{2}\Delta+\frac{\nabla h(x,y)\cdot \nabla}{h(x,y)}.
\end{align*}
The following proposition provides information on the boundary values of $u_f$.

\begin{proposition}
    For each $x\in\R^d$, $p(x,y)/h(x,y)$  converges as $y\to 0$. Let $\Psi(x)$ be the limit, and $f\in\ell^p(\Z^d)$ be compactly supported. Define $f_\ext(x) := \sum_{n\in\Z^d} f(n)\Psi(x-n)$. Then, $f_\ext(n) = f(n)$ for all $n\in\Z^d$ and $\|f_\ext\|_{L^p}\leq \|f\|_{\ell^p}$. 
\end{proposition}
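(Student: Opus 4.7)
The plan is to exploit the periodicity of $h$: since $h(x + m, y) = h(x, y)$ for every $m \in \Z^{d}$, we have
\begin{equation*}
\frac{p_{n}(x, y)}{h(x, y)} = \frac{p(x - n, y)}{h(x - n, y)} =: \Phi(x - n, y),
\end{equation*}
so it suffices to analyze the single function $\Phi(x, y) := p(x, y)/h(x, y)$, set $\Psi(x) := \lim_{y \to 0^{+}} \Phi(x, y)$, and note that then automatically $\lim_{y \to 0^{+}} p_{n}(x, y)/h(x, y) = \Psi(x - n)$.

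To identify $\Psi$ explicitly, I would first cancel the common factor $c_{d} y$ between numerator and denominator (using \eqref{PoissonK} and \eqref{doob-h}) to obtain
\begin{equation*}
\Phi(x, y) = \frac{(|x|^{2} + y^{2})^{-(d+1)/2}}{\sum_{k \in \Z^{d}} (|x - k|^{2} + y^{2})^{-(d+1)/2}}.
\end{equation*}
For $x \in \R^{d} \setminus \Z^{d}$, dominated convergence with the majorant $\sum_{k \in \Z^{d}} |x - k|^{-(d+1)} < \infty$ (summable since $d + 1 > d$) yields
\begin{equation*}
\Psi(x) = \frac{|x|^{-(d+1)}}{\sum_{k \in \Z^{d}} |x - k|^{-(d+1)}},
\end{equation*}
while isolating the $k = 0$ term in the denominator shows $\Psi(0) = 1$, and periodicity together with $p(m, y) = O(y)$ for $m \in \Z^{d} \setminus \{0\}$ shows $\Psi(m) = 0$. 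Consequently $\Psi(n - m) = \delta_{nm}$ for $n, m \in \Z^{d}$, and the identity $f_{\ext}(n) = \sum_{m} f(m) \Psi(n - m) = f(n)$ is immediate.

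For the $L^{p}$ bound, I would next record two properties of $\Psi$: non-negativity (inherited from $\Phi \geq 0$) and the partition of unity
\begin{equation*}
\sum_{n \in \Z^{d}} \Psi(x - n) = 1, \qquad x \in \R^{d}.
\end{equation*}
The pre-limit version $\sum_{n} \Phi(x - n, y) = \sum_{n} p_{n}(x, y)/h(x, y) = 1$ is immediate from the definition of $h$, and the limit $y \to 0^{+}$ can be taken using the explicit closed form for $\Psi$. Integrating this identity over the fundamental cube $Q$ and unfolding by translations gives $\int_{\R^{d}} \Psi(x)\,dx = 1$. Since $\{\Psi(x - n)\}_{n \in \Z^{d}}$ is therefore a probability weight on $\Z^{d}$ for every fixed $x$, Jensen's inequality yields
\begin{equation*}
|f_{\ext}(x)|^{p} \leq \sum_{n \in \Z^{d}} |f(n)|^{p} \Psi(x - n),
\end{equation*}
and integrating in $x$ with Tonelli together with $\int_{\R^{d}} \Psi(x - n)\, dx = 1$ produces $\|f_{\ext}\|_{L^{p}}^{p} \leq \|f\|_{\ell^{p}}^{p}$.

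The only mildly delicate point is the exchange of limit and sum used to pass from $\sum_{n} \Phi(x - n, y) = 1$ to the partition of unity for $\Psi$, or equivalently confirming $\int_{\R^{d}} \Psi = 1$ rather than merely $\leq 1$. This can be handled cleanly by the explicit closed form for $\Psi$ together with dominated convergence via the summability of $\sum_{k} |x - k|^{-(d+1)}$; as a fallback, Fatou's lemma alone gives $\sum_{n} \Psi(x - n) \leq 1$, which is already enough to run the Jensen estimate and obtain the desired $L^{p}$ bound.
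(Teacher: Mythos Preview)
Your argument is correct and follows essentially the same route as the paper: identify $\Psi$ explicitly by cancelling the common factor $c_d y$, verify $\Psi(n-m)=\delta_{nm}$ on the lattice, and then control $\|f_{\ext}\|_{L^p}$ by a convexity argument (you phrase it as Jensen, the paper as H\"older with weight $\Psi(x-n)$). The one point where you go slightly beyond the paper is that you establish the exact partition of unity $\sum_n \Psi(x-n)=1$ and $\int_{\R^d}\Psi=1$ from the closed form, whereas the paper is content with the Fatou inequalities $\le 1$, which already suffice for the $L^p$ bound; your fallback remark shows you are aware of this.
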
 
\begin{proof}
Suppose $x=0$. Note that $p_n(0,y)/h(0,y)\le 1$ and 
\begin{align*}
    h(0,y) 
    &= p(0,y) + \sum_{n\neq 0} p(n,y) = p(0,y) + \sum_{n\neq 0} \frac{c_d y}{\left(|n|^2+y^2\right)^{\frac{d+1}{2}}}\\
    & \le p(0,y) + c_d y \sum_{n\neq 0} |n|^{-(d+1)}.
\end{align*}
Since the sum on the right hand side is finite, we have
\begin{align*}
    \frac{1}{1+C_dy^{d+1}} \le \frac{p(0,y)}{h(0,y)}\le 1.
\end{align*}
Thus, if $x=0$, the limit exists and $\Psi(0)=1$. 

Suppose $x=n\in\Z^d\setminus\{0\}$. Then, $\lim_{y\to 0}p(n,y) = 0$ and 
\begin{align*}
    \frac{1}{h(n,y)} \le \frac{1}{p(0,y)} = \frac{1}{c_d}y^d\to 0.
\end{align*}
Thus, the limit exists and $\Psi(n)=0$. For $x\in\R^d\setminus\Z^d$, we have $\lim_{y\to 0} p(x,y)/y = c_d |x|^{-d-1}$ and
\begin{align*}
    \lim_{y\to 0}\frac{h(x,y)}{y} =c_d\sum_{n\in\Z^d}|x-n|^{-(d+1)}.
\end{align*}
Since the sum is finite for each $x\in\R^d\setminus\Z^d$, we conclude that $\Psi(x)$ is well-defined. Note that
\begin{align*}
    \int_{\R^d}\frac{p(x,y)}{h(x,y)}\, dx = \sum_{n\in\Z^d}\int_Q \frac{p_n(x,y)}{h(x,y)}\, dx = 1
\end{align*}
and
\begin{align*}
    \sum_{n\in\Z^d}\frac{p(x-n,y)}{h(x-n,y)} = \sum_{n\in\Z^d}\frac{p_n(x,y)}{h(x,y)} = 1.  
\end{align*}
By Fatou's lemma, we have
\begin{align*}
    \int_{\R^d}\Psi(x)\, dx \leq 1, \qquad \sum_{n\in\Z^d}\Psi(x-n) \le 1.
\end{align*}
Thus, it follows from H\"older's inequality that
\begin{align*}
    \|f_\ext\|_{L^p}^p
    = \int_{\R^d} |f_\ext (x)|^p\, dx
    \le \left(\sum_{n\in\Z^d}|f(n)|^p\int_{\R^d}\Psi(x-n)\, dx \right)\left(\sum_{n\in\Z^d}\Psi(x-n)\right)^{p-1} 
    \le \|f\|_{\ell^p}^p.
\end{align*}
\end{proof}

\begin{remark}\label{MagSteWeRMK1}
Due to this Proposition, we call $u_f$ the discrete harmonic extension of $f$.
\end{remark}  

When  $d=1$, we can use \eqref{Explicitd=1} to compute that 
\begin{align*}
	{\Psi_1(x):=}\Psi(x)=
	\begin{cases}
		\frac{\sin^2(\pi x)}{\pi^2 x^2}, & x\ne 0,\\
		1, & x=0.
	\end{cases}
	\end{align*}
{For an arbitrary $d = 1, 2, \dots$, s}et $\Psi_d(x)=\prod_{i=1}^d{\Psi_1}(x_i)$ and consider the function 
$$\wt{f}_{\ext}(x)=\sum_{n\in \Z^d}f(n)\Psi_d(x-n).$$ 
It was proved in~\cite{MSW} that for all $d\geq 1$, there exists a dimensional constant $A_d\geq 1$ such that 
\begin{align}\label{MSW}
    \frac{1}{A_d}\|f\|_{\ell^p}\leq \|\wt{f}_{\ext}\|_{L^p}\leq A_d\|f\|_{\ell^p}.
\end{align} 
The compact support of the Fourier transform of $\Psi_d(x)$, which we are not able to verify in our case for $\Psi(x)$ when  $d>1$, is crucial for the first inequality in~\eqref{MSW}. The bounds in~\eqref{MSW} were used in~\cite{MSW}*{Proposition 2.1} to show that the $L^p$-norm of a continuous Fourier multiplier operator $T$, when the multiplier is bounded and of compact support, controls the $\ell^p$-norm of its discrete version $T_{\dis}$ with a constant depending on $d$. More precisely, suppose $$\wh{K}(\xi)=\int_{\R^d} K(x)e^{-2\pi ix\cdot \xi} dx$$ is bounded and supported on the fundamental cube $Q={[-\frac12, \frac12)^d}$. Define the Fourier multiplier $\wh{Tf}(\xi)=\wh{K}(\xi)\wh{f}(\xi)$ and its discrete version by
\begin{align*}
    T_{\dis}f(n)=\sum_{m\in \Z^d} K(m)f(n-m).
\end{align*}
Fix $1\leq p\leq \infty$. If $T$ is bounded on $L^p(\R^d)$, then $T_{\dis}$ is bounded on $\ell^p(\Z^d)$ and 
\begin{align}\label{MSW-Ineq}
    \|T_{\dis}\|_{\ell^p\to\ell^p}\leq C_d\|T\|_{L^p\to L^p}, 
\end{align} 
where $C_d=3^d A_d^2$. 

\begin{remark}\label{MagSteWeRMK}
The problem raised in~\cite{MSW}*{Remark (1), pg.\,193}, \begin{quote}``it would be interesting to know if $C_d$ can be taken to be independent of $d$, or for that matter if $C_d=1$,''\end{quote} has been shown not to be the case, at least for $p$ near 1.  See 
\cite{Kov} 
for details.  

\end{remark} 
 
\subsection{Doob $h$-process and martingale transforms}\label{general-d}
For the function $h(x,y)$ defined in~\eqref{doob-h}, let $Z_t$ be a solution of the stochastic differential equation 
\begin{align*}
	dZ_{t}=dB_{t}+\frac{\nabla h(Z_{t})}{h(Z_{t})}dt,
\end{align*}
where $(B_t)_{t\geq 0}$ is the $(d+1)$-dimensional Brownian motion starting from $(x_{0},y_{0})\in\R^d\times\R_+$. The lifetime of $Z_{t}$ in the upper half-space is defined by $\tau=\inf\{t\geq0: Y_{t}=0\}$. The lifetime  $\tau$ is finite with probability one and the process $Z_t$ only exits the upper half-space on $\Z^d \times \{0\}$. Indeed, at its lifetime $Z_t$ approaches the point $n\in \Z^d$ with probability $\frac{p_n(x,y)}{h(x,y)}$, where $(x, y)$ is the starting point of $Z_t$. For the basic properties and stochastic calculus for the Doob $h$-processes we refer the reader to \cite[Chapter 3]{Bass}.

We denote by  $\ell^{p}_{c}(\Z^d)$ the space of all compactly supported functions in $\ell^{p}(\Z^d)$. For $f\in \ell^p_c(\Z^d)$, we define 
\begin{align*}
    M_{t}=M^{f}_{t}=u_{f}(Z_{t}),  \quad \text{ for } t\in(0,\tau).
\end{align*}
By It\^o's formula, $M^{f}_{t}$ is a martingale and satisfies
\begin{align}\label{eq:def_M_t}
	M^{f}_{t}
	&= M^{f}_{0}+\int_{0}^{t}\nabla u_{f}(Z_{s})\cdot dZ_{s} +\frac{1}{2}\int_{0}^{t}\Delta u_{f}(Z_{s})\, ds \\
	&= M^{f}_{0}+\int_{0}^{t}\nabla u_{f}(Z_{s})\cdot dZ_{s} -\int_{0}^{t}\frac{\nabla h(Z_{s})\cdot \nabla u_{f}(Z_{s})}{h(Z_{s})}\, ds \nonumber\\
	&= M^{f}_{0}+\int_{0}^{t}\nabla u_{f}(Z_{s})\cdot dB_{s},\nonumber 
\end{align}
where $(B_t)_{t\geq0}$ is the $(d+1)$-dimensional Brownian motion.

Let $\fM_{(d+1)}(\R)$ be the space of all $(d+1)\times (d+1)$ real matrices and denote its norm by 
$$ \|A\|=\sup\{\|Av\|:v\in\R^{d+1}, \|v\|\leq 1\}.$$
By abuse of notation, for a matrix-valued function $A:\R^d\times \R_+\to\fM_{(d+1)}(\R)$, that is, $A(x,y)=(a_{ij}(x,y))_{1\leq i,j\leq d+1}$ for all $x\in\R^d$ and $y>0$, we define 
\begin{align}\label{normofA}
	\|A\| &=\sup_{(x,y)\in\R^d\times\R_+}\|A(x,y)\|\\
	& = \sup\{\|A(x,y)v\|:v\in\R^{d+1}, \|v\|\leq 1, (x,y)\in\R^d\times\R_+\}\nonumber.
\end{align}
We say a matrix $A(x, y)=(a_{ij}(x, y))\in\fM_{(d+1)}(\R)$ is {\it orthogonal}
if $\brk{A(x, y)v,v}=\sum_{ij}a_{ij}v_i v_j=0$ for all $(x, y)\in \R^d\times \R_+$ and all $v\in\R^{d+1}$. Let $A(x,y)$ be a matrix-valued function on $\R^d\times \R_+$ and $f\in\ell^p_c(\Z^d)$. The martingale transform of $(M^f_{t})_{t\geq0}$ with respect to $A(x,y)$ is defined by
\begin{align*}
	(A\star M^f)_{t}
	&:= \int_{0}^{t}A(Z_{s})\nabla u_f(Z_{s})\cdot dB_{s}\\
    &= \int_{0}^{t}A(Z_{s})\nabla u_f(Z_{s})\cdot dZ_{s}-\int_{0}^{t}\frac{A(Z_s)\nabla u_f(Z_{s}) \cdot \nabla h(Z_{s})}{h(Z_{s})}\, ds. 
\end{align*}
From the martingale inequalities in \cite{Bur84} (general $A$) and \cite{BanWang} (orthogonal $A$), respectively, we have the following
\begin{theorem}\label{thm:MartTrans}
Let $1<p<\infty$ and recall that $p^\ast = \max\{p,\frac{p}{p-1}\}$. 
\begin{itemize}
\item[(i)] Let $A$ be a matrix-valued function with $\|A\|<\infty$. Then we have
\begin{align*}
    \| A\star M^f\|_p\leq (p^\ast -1)\|A\|\|M^f\|_p.
\end{align*}
\item[(ii)]  If additionally $A$ is orthogonal, then
\begin{align*}
    \| A\star M^f\|_p\leq \cot\left(\frac{\pi}{2 p^\ast}\right) \|A\|\|M^f\|_p.  
\end{align*}
\end{itemize} 
\end{theorem}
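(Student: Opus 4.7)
The plan is to recognize that both $M^f$ and $N := A\star M^f$ are continuous Brownian martingales whose integrands are given by $\nabla u_f(Z_s)$ and $A(Z_s)\nabla u_f(Z_s)$ respectively, and to apply the cited sharp martingale inequalities after checking the two structural conditions that drive them: differential subordination and, for part (ii), orthogonality in the sense that the covariation vanishes identically.

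First I would record the relevant variations using the It\^o isometry. Writing $\varphi(s) := \nabla u_f(Z_s) \in \R^{d+1}$, we have $dM^f_s = \varphi(s) \cdot dB_s$ and $dN_s = A(Z_s)\varphi(s) \cdot dB_s$, so
\begin{equation*}
    d\langle M^f\rangle_s = |\varphi(s)|^2\, ds, \quad d\langle N\rangle_s = |A(Z_s)\varphi(s)|^2\, ds, \quad d\langle N, M^f\rangle_s = \langle A(Z_s)\varphi(s),\varphi(s)\rangle\, ds.
\end{equation*}
The pointwise matrix bound $|A(x,y)v| \leq \|A\|\,|v|$ immediately gives the differential subordination $d\langle N\rangle_s \leq \|A\|^2\, d\langle M^f\rangle_s$, and when $A$ is orthogonal the identity $\langle A(x,y)v,v\rangle = 0$ yields $\langle N, M^f\rangle \equiv 0$. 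Since $N_0 = 0$, the initial conditions are also compatible with both sharp inequalities.

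Having established these conditions, I would apply the continuous-time version of Burkholder's sharp differential-subordination inequality from \cite{Bur84} (after rescaling $N$ by $\|A\|^{-1}$, the case $\|A\|=0$ being trivial) to obtain (i), and the orthogonal-martingale refinement of \cite{BanWang} to obtain (ii). Both are applied to the stopped martingales $M^f_{t\wedge\tau}$ and $N_{t\wedge\tau}$; the interpretation of $\|M^f\|_p$ on the right-hand side is $\sup_t \|M^f_{t\wedge\tau}\|_p$, which coincides with $\|M^f_\tau\|_p$ by uniform integrability.

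The only technical issue is justifying the limit $t\to\infty$ and the passage from stopped to unstopped quantities: this follows because $\tau < \infty$ a.s.\ and, since $f$ has compact support and $p_n/h \leq 1$, $u_f$ is bounded on $\R^d\times \R_+$. Dominated convergence and optional stopping then close the argument. The heart of the proof is purely algebraic --- verifying $|Av|^2 \leq \|A\|^2|v|^2$ and, in the orthogonal case, $\langle Av,v\rangle = 0$ --- after which the deep analytic content is carried entirely by the Burkholder and Ba\~nuelos--Wang inequalities; the main obstacle, if any, is bookkeeping the stopping time to ensure the hypotheses of the abstract inequalities are met in our specific Doob $h$-process setting.
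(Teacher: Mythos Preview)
Your proposal is correct and matches the paper's approach: the paper does not give a proof at all but simply states the theorem as an immediate consequence of the martingale inequalities in \cite{Bur84} and \cite{BanWang}. Your write-up supplies exactly the routine verification (differential subordination from $|A(Z_s)\varphi(s)|^2\le\|A\|^2|\varphi(s)|^2$, vanishing covariation from $\langle A(Z_s)\varphi(s),\varphi(s)\rangle=0$, and $N_0=0$) that justifies invoking those cited results in this Doob $h$-process setting.
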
 

\section{Discrete operators arising from martingale transforms}
\subsection{The projection operators $T_A$ and their $\ell^p$ boundedness}\label{ProjOP} 
For the rest of this paper we fix our starting point $(x_0, y_0)$ to be $(0, w)$, $w>0$. For $A:\R^d\times \R_+\to \fM_{(d+1)}(\R)$ with $\|A\|<\infty$,  $f\in \ell^p_c(\Z^d)$ and $n\in\Z^d$, we define 
\begin{align}\label{Martranpro}
    T_A^{w}(f)(n)  =\rE_{(0,w)}\Big[(A\star M^{f})_{\tau}\Big| X_{\tau}=n\Big].
\end{align}

We call these operators \textit{``projections of martingale transforms.''}  Our next goal is twofold. Firstly, we show that when $w\to\infty$, they give rise to a family of operators, denoted as  $T_A$, which are bounded on $\ell^p_c(\Z^d)$, $1<p<\infty$, with the same $\ell^p$-bounds as those given in Theorem \ref{thm:MartTrans}. In particular, these bounds are independent of $d$. Secondly, we compute their kernels. Although these constructions are in the style of Gundy--Varopoulos \cite{GV79}, we follow the approach in \cite{Ban86}*{Section 2} using the occupation time formula in terms of the Green's functions to compute their kernels.   

For each $n\in \Z^d$, we consider the processes $(Z_{t})_{t\geq0}$ starting at $(0,w)$ and conditioned to exit the upper half-space at $(n,0)$ and denote it by $Z_t^n$. Then $Z_t^n$ is just Brownian in the upper half-space with drift $\frac{\nabla p_n}{p_n}.$ Let us denote the Brownian motion which arises as the martingale part of $Z_t^n$ by $B^n_t$, and the expectation of $(Z^{n}_{t})_{t\geq0}$ by $\rE^{n}_{(0,w)}$. Then $T_A^{w}(f)$ can be written as
\begin{align*}
    T^{w}_A(f)(n)	&= \rE_{(0,w)}\Big[(A\star M^{f})_{\tau}\Big|X_{\tau}=n\Big] \\
    &= \rE_{(0,w)}\Big[\int_{0}^{\tau}A(Z_{s})\nabla u_{f}(Z_{s})\cdot dZ_{s}-\int_{0}^{\tau}\frac{A(Z_s)\nabla u_{f}(Z_{s})\cdot \nabla h(Z_{s})}{h(Z_{s})}ds\Big|X_{\tau}=n\Big] \\
    &= \rE_{(0,w)}^{n}\Big[\int_{0}^{\tau}A(Z_{s}^n)\nabla u_{f}(Z_{s}^{n})\cdot dZ_{s}^{n}-\int_{0}^{\tau}\frac{A(Z_s^n)\nabla u_{f}(Z_{s}^{n})\cdot \nabla h(Z_{s}^{n})}{h(Z_{s}^{n})}ds\Big] \\
	&= \rE_{(0,w)}^{n}\Big[\int_{0}^{\tau}A(Z^n_{s})\nabla u_{f}(Z_{s}^{n})\cdot dB^{{n}}_{s} +\int_{0}^{\tau}\frac{A(Z_s^n)\nabla u_{f}(Z_{s}^{n})\cdot \nabla p_n(Z_{s}^{n})}{p_n(Z_{s}^{n})}ds\\
    &\qquad	-\int_{0}^{\tau}\frac{A(Z_s^n)\nabla u_{f}(Z_{s}^{n})\cdot \nabla h(Z_{s}^{n})}{h(Z_{s}^{n})}ds\Big] \\
    &= \rE_{(0,w)}^{n}\Big[ \int_{0}^{\tau}A(Z_s^n)\nabla u_{f}(Z_{s}^{n})\cdot \left(\frac{\nabla p_n(Z_{s}^{n})}{p_n(Z_{s}^{n})}-\frac{\nabla h(Z_{s}^{n})}{h(Z_{s}^{n})}\right) ds\Big].
\end{align*}

Next, we use the occupation time formula to write this expectation as an integral over $\R^d\times\R_+$. Let us denote the Green's function for the upper half-space with pole $(0, w)$ by $G_{w}(x, y)$. Then 
\begin{align*}
	G_{w}(x,y)
    = \begin{cases}
        \ds\frac{1}{2\pi}\log\Paren{\frac{x^2+(y+w)^2}{x^2+(y-w)^2}}, & d=1, \\
        \ds\frac{\Gamma(\frac{d-1}{2})}{2\pi^{\frac{d+1}{2}}}\Paren{\frac{1}{(|x|^{2}+|w-y|^{2})^{\frac{d-1}{2}}}-\frac{1}{(|x|^{2}+|w+y|^{2})^{\frac{d-1}{2}}}}, & d\geq 2.
    \end{cases}
\end{align*} 
Since the occupation time measure for the process $Z_t^n$ is given by 
$$ \frac{p_{n}(x,y)G_{w}(x,y)}{p_{n}(0,w)},$$
it follows from the occupation time formula that
\begin{align*}
    T^{w}_A(f)(n)  
    &= \int_{\R^{d}}\int_{\R_{+}}\frac{p_{n}(x,y)G_{w}(x,y)}{p_{n}(0,w)} A(x,y)\nabla u_{f}(x,y)\cdot \left(\frac{\nabla p_n(x,y)}{p_n(x,y)}-\frac{\nabla h(x,y)}{h(x,y)}\right)\, dydx\\
    &= \int_{\R^{d}}\int_{\R_{+}}\frac{h(x,y)G_{w}(x,y)}{p_{n}(0,w)} A(x,y)\nabla u_{f}(x,y)\cdot \left(\frac{\nabla p_n(x,y)}{h(x,y)}-\frac{p_{n}(x,y) \nabla h(x,y)}{h(x,y)^2}\right)\, dydx\\
    &= \int_{\R^{d}}\int_{\R_{+}}\frac{h(x,y)G_{w}(x,y)}{p_{n}(0,w)} A(x,y) \nabla u_{f}(x,y)\cdot \nabla \left(\frac{p_n(x,y)}{h(x,y)}\right)\, dydx.
\end{align*} 
    
For $m,n\in \Z^d$, we define the kernel for the operator $T^{w}_A$ by 
\begin{equation}\label{kernel1}
    \bK^{w}_A(n,m) :=T^{w}_A(\delta_{m})(n) 
\end{equation}
where $\delta_m(n)=1$ if $m=n$ and otherwise 0. Note that $u_{\delta_{m}}(x,y)=\frac{p_{m}(x,y)}{h(x,y)}$ and hence we have 
\begin{equation}\label{kernel2}
    T^w_A(f)(n)=\sum_{m\in\Z^d}\bK^{w}_A(n,m)f(m), 
\end{equation}  
where 
\begin{align}\label{kernel3} 
	\bK^{w}_A(n,m) =\int_{\R^{d}}\int_{\R_{+}}\frac{G_{w}(x,y)}{p_{n}(0,w)}h(x,y) A(x,y)\nabla \left(\frac{p_m}{h}\right)\cdot \nabla \left(\frac{p_n}{h}\right)\, dydx.
\end{align}

With the kernels $\bK^{w}_A(n,m)$ defined for all $w>0$, we like to compute the limit as $w\to \infty$ and study their properties. For each $m,n\in\Z^d$ and $f\in\ell^p_c(\Z^d)$, we define 
\begin{equation}
    \bK_A(n,m)=\lim_{w\to\infty}\bK^{w}_A(n,m)\label{kernel}
\end{equation}
and 
\begin{equation}
    T_A(f)(n)=\sum_{m\in\Z^d}\bK_A(n,m)f(m)\label{disoper}.
\end{equation}
The following theorem shows that $T_A$ is well-defined and gives an explicit expression for it.  

\begin{theorem}\label{thm:TaIntRep}
Let $A(x,y)$ be a matrix-valued function with $\|A\|<\infty$ and $m,n\in\Z^d$. Then, $\bK^w_A(n,m)$ converges as $w\to\infty$ and 
\begin{align}\label{kernelrep}
    \bK_A(n, m) =\lim_{w\to\infty}\bK^w_A(n,m) = \int_{\R^{d}}\int_{\R_{+}} 2y h(x,y) A(x,y)\nabla \left(\frac{p_m}{h}\right)\cdot \nabla \left(\frac{p_n}{h}\right)\, dydx.
\end{align}
\end{theorem}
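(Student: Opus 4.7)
Proof proposal: The strategy is dominated convergence applied to the integral representation \eqref{kernel3} of $K^w_A(n,m)$. Only the factor $\Phi_w(x,y):=G_w(x,y)/p_n(0,w)$ depends on $w$; everything else is $w$-independent. The plan is to prove (a) $\Phi_w(x,y)\to 2y$ pointwise on $\R^d\times\R_+$; (b) for all $w$ sufficiently large, $\Phi_w(x,y)\le C_d\,y$; and (c) the dominating integrand $y\,h\,|A|\,|\nabla(p_m/h)|\,|\nabla(p_n/h)|$ lies in $L^1(\R^d\times\R_+)$. The desired identity \eqref{kernelrep} then follows immediately.

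For (a) and (b) I would argue directly from the explicit formulas. When $d=1$, writing
\[
G_w(x,y) = \frac{1}{2\pi}\log\Paren{1+\frac{4yw}{x^2+(y-w)^2}}
\]
and expanding $\log(1+u)=u+O(u^2)$ gives $G_w(x,y)=\frac{2y}{\pi w}(1+o(1))$, while $p_n(0,w)=\frac{1}{\pi w}(1+o(1))$; the ratio converges to $2y$, with an easily quantified error term. When $d\ge 2$, apply the mean value theorem to $t\mapsto(|x|^2+t^2)^{-(d-1)/2}$ on $[w-y,w+y]$: there exists $\theta=\theta(x,y,w)\in(w-y,w+y)$ with
\[
\frac{G_w(x,y)}{p_n(0,w)} = \frac{2y\,\theta\,(|n|^2+w^2)^{(d+1)/2}}{w\,(|x|^2+\theta^2)^{(d+1)/2}},
\]
where the gamma identity $(d-1)\Gamma((d-1)/2)=2\Gamma((d+1)/2)$ absorbs the constants. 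The right-hand side manifestly converges to $2y$, and restricting to $w\ge 2(|n|+y+1)$ forces $\theta\in(w/2,3w/2)$ and $|x|^2+\theta^2\ge w^2/4$, giving the uniform bound $\Phi_w(x,y)\le C_d\,y$.

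For (c), expand $\nabla(p_k/h)=\nabla p_k/h-(p_k/h)\nabla h/h$ for $k\in\{m,n\}$, bound $|A|\le\|A\|$, and split the integration into the regions $\{y\ge y_0\}$ and $\{y<y_0\}$. On the former, Lemma~\ref{lem:hlimit} gives $h\ge C_1>0$, and the Poisson kernels and their derivatives decay rapidly enough in $|x|$ and $y$ that integrability is routine. On the latter, combine the periodicity of $h$ with \eqref{eq:hupperbound2} to obtain $h(x,y)\ge C_4\,y/\sqrt{|x-j|^2+y^2}$ on each cell $j+Q$; paired with the pointwise estimates $|\nabla p_k|\le C_d(|x-k|^2+y^2)^{-(d+2)/2}$, $|\nabla h|\le C_d\sum_{j\in\Z^d}(|x-j|^2+y^2)^{-(d+2)/2}$, and the bound $p_k/h\le 1$, a cell-by-cell computation should close the integrability estimate.

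The main obstacle is step (c): because $h$ has Poisson-kernel-type singularities at every lattice point of $\{y=0\}$, the quantity $\nabla h/h$ blows up there, and the naive estimate for $\nabla(p_m/h)$ does not visibly decay. The saving features are the factor $y$ in the dominating integrand and the universal bound $p_m/h\le 1$, but a careful cell-by-cell analysis using the periodicity of $h$ and the estimates \eqref{eq:hupperbound1}--\eqref{eq:hupperbound2} will be needed to make the bookkeeping rigorous. Once (a)--(c) are in place, dominated convergence delivers \eqref{kernelrep}.
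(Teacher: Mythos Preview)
Your outline has two genuine gaps.

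\textbf{The uniform bound (b) is false.} The Green's function $G_w$ has a pole at $(0,w)$, so $\Phi_w(0,w)=\infty$; no inequality $\Phi_w\le C_d\,y$ can hold on all of $\R^d\times\R_+$. Your restriction ``$w\ge 2(|n|+y+1)$'' depends on $y$ and therefore does not yield a $w$-independent majorant for dominated convergence. The paper deals with this by splitting the integral into the region $\{0<y<w/2\}$, where $\Phi_w\le C_d\,y$ does hold (Lemma~\ref{lem:greenlimit}), and the region $\{y\ge w/2\}$, where a separate argument using the bound $\Phi_w\le 2y\,g(y/w)$ shows the contribution vanishes as $w\to\infty$. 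Once you insert this splitting, your treatment of (a) and (b) is essentially the same as the paper's.

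\textbf{The integrability (c) fails for $m=n$.} The sharpest pointwise bound you can extract from Lemma~\ref{lem:gradbound} is
\[
y\,h\,\Bigl|\nabla\frac{p_n}{h}\Bigr|^{2}\;\le\;\frac{4d^2\,p_n^2}{y\,h}\;\le\;\frac{4d^2\,p_n}{y}\;=\;\frac{4d^2 c_d}{(|x-n|^2+y^2)^{(d+1)/2}}\,,
\]
whose integral over $\R^d$ is $C_d/y$, which is not integrable near $y=0$. Cell-by-cell refinements using \eqref{eq:hupperbound1}--\eqref{eq:hupperbound2} run into the same obstruction on the cell containing $n$. The paper circumvents this entirely: applying It\^o's formula with $A=\mathrm{Id}$ yields the exact identity
\[
\int_0^\infty\!\!\int_{\R^d}\frac{G_w(x,y)}{p_n(0,w)}\,h(x,y)\,\Bigl|\nabla\frac{p_n}{h}\Bigr|^{2}\,dx\,dy\;=\;1-\frac{p_n(0,w)}{h(0,w)}\;\le\;1,
\]
which, combined with a lower bound $G_w/p_n(0,w)\ge Cy$ on $\{0<y<1,\ |x|\le N\}$ for large $w$, gives the local $L^1$ control needed to run dominated convergence on compacta, followed by a tail estimate. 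Your proposal does not contain this idea, and I do not see how to close the diagonal case $m=n$ without it or some equivalent Littlewood--Paley identity.

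For $m\ne n$ your plan is correct once the splitting above is in place: the key estimate is $p_m p_n/h\le p_m p_n/(p_m+p_n)\le C_{m,n}\,y/(|x|^2+1)^{(d+1)/2}$ for $0<y<1$, which is integrable, and this is exactly what the paper does.
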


\begin{remark}\label{rmk:kernelsymmetry}
    Note that if $A(x,y)=A$ is a constant matrix, then the change of variables $x=-z+m-n$ gives $K_{A}(n,m)= -K_{A^T}(n,m)$.
\end{remark}

\begin{lemma}\label{lem:greenlimit}
Let $d\geq 1$, $n\in \Z^d$, and $(x,y)\in\R^{d}\times\R_{+}$. Then we have 
\begin{equation*}
    \lim_{w\to\infty}\frac{G_{w}(x,y)}{p_{n}(0,w)}=2y.
\end{equation*}

{Suppose that $d \geq 2$. Then
\begin{equation*}
    \frac{G_{w}(x,y)}{p_{n}(0,w)} \leq \begin{cases}
     C_d y & y \le \tfrac{w}{2} , \\
     C_d w (|\tfrac{x}{w}|^2 + (\tfrac{y}{w} - 1)^2)^{-(d - 1)/2} & y > \tfrac{w}{2} .
    \end{cases}
\end{equation*}
when $n\in \Z^d$ and $w\geq |n|$.

For $d = 1$, we have
\begin{equation*}
    \frac{G_{w}(x,y)}{p_{n}(0,w)} \leq 2yg\left( \frac{y}{w} \right) 
\end{equation*}
if $n \in \Z^d$ and $w \geq |n|$, where 
\begin{align*}
    g(t)=
        \frac{1}{2t} \log(1+4t(t-1)^{-2}) .
\end{align*}}

\end{lemma}
\begin{proof}
The case $d=1$ was proven in~\cite{BanKwa}*{Lemma 3.3}. For $d\geq 2$, we use the mean value theorem to get
\begin{align*}
    \frac{1}{(|x|^{2}+|w-y|^{2})^{\frac{d-1}{2}}}-\frac{1}{(|x|^{2}+|w+y|^{2})^{\frac{d-1}{2}}}
    &= 2(d-1)wy(|x|^{2}+|w-y|^{2}+4wy\varep)^{-\frac{d+1}{2}}\\
    &= 2(d-1)yw^{-d}(1+|{\tfrac{x}{w}}|^{2}+|{\tfrac{y}{w}}|^{2}+2(2\varep-1){\tfrac{y}{w}})^{-\frac{d+1}{2}}
\end{align*}
for some $\varep\in(0,1)$. The first part follows now from
\begin{align*}
    \frac{G_{w}(x,y)}{p_{n}(0,w)}
    & = \frac{\Gamma(\frac{d-1}{2})}{2\pi^{\frac{d+1}{2}}} \frac{2(d-1)yw^{-d}(1+|\frac{x}{w}|^{2}+|\frac{y}{w}|^{2}+2(2\varep-1)\frac{y}{w})^{-\frac{d+1}{2}}}{c_{d}w^{-d}(1+\frac{|n|^{2}}{|w|^{2}})^{-\frac{d+1}{2}}} \\
    & = \frac{2y(1+\frac{|n|^{2}}{|w|^{2}})^{\frac{d+1}{2}}}{(1+|\frac{x}{w}|^{2}+|\frac{y}{w}|^{2}+2(2\varep-1)\frac{y}{w})^{\frac{d+1}{2}}} ,
\end{align*}
{because the right-hand side clearly converges to $2 y$ as $w \to \infty$. To prove the second part of the statement, we observe that}
\begin{align*}
    \frac{G_{w}(x,y)}{p_{n}(0,w)}
    = \frac{2y(1+\frac{|n|^{2}}{|w|^{2}})^{\frac{d+1}{2}}}{(1+|\frac{x}{w}|^{2}+|\frac{y}{w}|^{2}+2(2\varep-1)\frac{y}{w})^{\frac{d+1}{2}}}
    \leq 2y\Big(\frac{2}{|\frac{y}{w}-1|^{2}}\Big)^{\frac{d+1}{2}} \le 2^{1 + 3 (d + 1) / 2} y ,
\end{align*}
when $w \geq |n|$ {and $y \le \tfrac{w}{2}$. Furthermore, when $y > \tfrac{w}{2}$, we have
\begin{align*}
    \frac{G_{w}(x,y)}{p_{n}(0,w)}
    & \le \frac{\Gamma(\frac{d - 1}{2})}{2 \pi^{(d + 1) / 2} c_d} \, \frac{(|n|^2 + w^2)^{(d + 1) / 2}}{w (|x|^2 + (y - w)^2)^{(d - 1) / 2}} \\
    & \le \frac{1}{d - 1} \, \frac{(2 w)^{d + 1}}{w^d (|\frac{x}{w}|^2 + (\frac{y}{w} - 1)^2)^{(d - 1) / 2}}
\end{align*}
when $w \geq |n|$.
}
\end{proof}

\begin{lemma}\label{lem:gradbound}
Let $d\geq 1$, $n\in \Z^d$, $x\in\R^{d}$, and $y>0$, then we have
\begin{equation*}
    \frac{|\nabla p_{n}(x,y)|}{p_{n}(x,y)}\leq \frac{d}{y},\quad
    \frac{|\nabla h(x,y)|}{h(x,y)}\leq \frac{d}{y}.
\end{equation*}
\end{lemma}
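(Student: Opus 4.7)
The plan is to handle the two inequalities separately; the bound on $h$ will follow from the bound on $p_n$ by the triangle inequality, so the only real task is to bound $|\nabla p|/p$ pointwise.

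First I would work with $\log p$ rather than $p$ itself, since the ratio $|\nabla p|/p = |\nabla \log p|$ is what we want to control. From the explicit formula $\log p(x,y) = \log c_d + \log y - \tfrac{d+1}{2} \log(|x|^2 + y^2)$ one computes directly
\begin{equation*}
    \nabla_x \log p(x,y) = -\frac{(d+1)\,x}{|x|^2+y^2}, \qquad \partial_y \log p(x,y) = \frac{1}{y} - \frac{(d+1)y}{|x|^2+y^2}.
\end{equation*}
Squaring and summing, the cross terms collapse nicely and I would expect the simplification
\begin{equation*}
    \frac{|\nabla p(x,y)|^2}{p(x,y)^2} = |\nabla \log p(x,y)|^2 = \frac{1}{y^2} + \frac{d^2-1}{|x|^2+y^2}.
\end{equation*}
Since $|x|^2 + y^2 \geq y^2$ and $d \geq 1$, this quantity is bounded by $\frac{1}{y^2} + \frac{d^2-1}{y^2} = \frac{d^2}{y^2}$, giving the first inequality. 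Because $p_n(x,y) = p(x-n,y)$, the same bound holds with $p$ replaced by $p_n$ for every $n \in \Z^d$.

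For the second inequality I would differentiate the series \eqref{doob-h} term by term (this is justified since $p_n(x,y)$ decays like $|n|^{-(d+1)}$ in $n$, uniformly on compact subsets of $\R^d \times \R_+$), apply the triangle inequality, and then use the pointwise bound already obtained:
\begin{equation*}
    |\nabla h(x,y)| \leq \sum_{n\in\Z^d} |\nabla p_n(x,y)| \leq \frac{d}{y} \sum_{n\in\Z^d} p_n(x,y) = \frac{d}{y}\, h(x,y).
\end{equation*}
Dividing by $h(x,y) > 0$ finishes the argument.

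Neither step poses a real obstacle; the only minor point worth noting is the algebraic collapse in the computation of $|\nabla \log p|^2$, where the two terms involving $(d+1)^2$ combine to cancel against the cross term $-\tfrac{2(d+1)}{|x|^2+y^2}$, leaving the clean expression above. It is also worth remarking that equality holds when $d = 1$ (as then the extra term vanishes), which is consistent with the explicit $d=1$ computations available from \eqref{Explicitd=1}.
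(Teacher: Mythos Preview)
Your proof is correct and follows essentially the same approach as the paper: compute $|\nabla p_n|/p_n$ explicitly and simplify, then pass to $h$ by the triangle inequality over the lattice sum. Your use of $\nabla\log p$ yields the intermediate form $\tfrac{1}{y^2}+\tfrac{d^2-1}{|x|^2+y^2}$, which is algebraically identical to the paper's $\tfrac{|x|^2+d^2y^2}{y^2(|x|^2+y^2)}$; the paper also handles the series for $h$ via finite truncations $h^N$ rather than your direct term-by-term differentiation, but both are routine.
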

\begin{proof}
Direct computation gives
\begin{align}
    \parder{}{x_{i}}p_{n}(x,y)
    &= -\frac{c_{d}(d+1)(x_{i}-n_{i})y}{(|x-n|^{2}+y^{2})^{\frac{d+3}{2}}}
    =-\frac{(d+1)(x_{i}-n_{i})}{|x-n|^{2}+y^{2}}p_{n}(x,y),\label{Poisshor} \\
    \parder{}{y}p_{n}(x,y)
    &= \frac{c_{d}(|x-n|^{2}-dy^{2})}{(|x-n|^{2}+y^{2})^{\frac{d+3}{2}}}
    = \frac{(|x-n|^{2}-dy^{2})}{y(|x-n|^{2}+y^{2})}p_{n}(x,y)\label{Poissver}.
\end{align}
This in turn gives 
\begin{align*}
    \frac{|\nabla p_{n}(x,y)|^{2}}{p_{n}(x,y)^{2}}
    &= \frac{1}{y^{2}(|x-n|^{2}+y^{2})^{2}}\Big((d+1)^{2}y^{2}|x-n|^{2}+(|x-n|^{2}-dy^{2})^{2}\Big) \\
    &= \frac{1}{y^{2}(|x-n|^{2}+y^{2})^{2}}\Big(|x-n|^{4}+(d^{2}+1)y^{2}|x-n|^{2}+d^{2}y^{4}\Big) \\
    &= \frac{1}{y^{2}(|x-n|^{2}+y^{2})^{2}}\Big((|x-n|^{2}+d^{2}y^{2})(|x-n|^{2}+y^{2})\Big) \\
    &= \frac{|x-n|^{2}+d^{2}y^{2}}{y^{2}(|x-n|^{2}+y^{2})} \\
    &\leq  \frac{d^{2}}{y^{2}}.
\end{align*}
Let $N>0$ and set $h^{N}(x,y)=\sum_{n\in \Z^d, |n|\leq N}p_{n}(x,y)$. 
Note that $h^{N}(x,y)\to h(x,y)$ and $\nabla h^{N}(x,y)\to \nabla h(x,y)$ as $N\to \infty$ uniformly on compact sets in $\R^{d}\times \R_{+}$. 
We then have
\begin{align*}
    |\nabla h^{N}(x,y)|
    &\leq  \sum_{n\in \Z^d, |n|\leq N}|\nabla p_{n}(x,y)| \\
    &\leq  \frac{d}{y}\sum_{n\in \Z^d, |n|\leq N} p_{n}(x,y) \\
    &\leq  \frac{d}{y}h(x,y).
\end{align*}
Letting $N\to \infty$, we get the desired result.
\end{proof}

\begin{proof}[Proof of Theorem~\ref{thm:TaIntRep}]
Let 
\begin{align*}
    j_{n,m}(x,y,w)
    =\frac{G_{w}(x,y)}{p_{n}(0,w)}h(x,y)A(x,y)\nabla \left(\frac{p_m}{h}\right)\cdot \nabla \left(\frac{p_n}{h}\right),
\end{align*}
then $\bK^w_A(n,m) = \iint j_{n,m}(x,y,w)\, dxdy$. Let $w\geq |n|$ and define 
\begin{align*}
    j_{n,m}^{(1)}(x,y,w) &= j_{n,m}(x,y,w)\ind_{\{0<y<w/2\}},\\
    j_{n,m}^{(2)}(x,y,w) &= j_{n,m}(x,y,w)\ind_{\{y\ge w/2\}}.
\end{align*}
We claim that
\begin{align*}
    \lim_{w\to\infty}\iint j_{n,m}^{(1)}(x,y,w)\, dxdy & = \int_{\R^{d}}\int_{\R_{+}} 2y h(x,y) A \nabla \left(\frac{p_m}{h}\right)\cdot \nabla \left(\frac{p_n}{h}\right)\, dydx,\\
    \lim_{w\to\infty}\iint j_{n,m}^{(2)}(x,y,w)\, dxdy &= 0.  
\end{align*} 
By Lemma~\ref{lem:gradbound}, we have
\begin{equation*}
	|\nabla (\log\frac{p_{m}(x,y)}{h(x,y)})|
	\leq \frac{|\nabla p_{m}(x,y)|}{p_{m}(x,y)} +\frac{|\nabla h(x,y)|}{h(x,y)} 
	\leq \frac{2d}{y}
\end{equation*}
and
\begin{equation*}
    \Abs{A(x,y)\nabla \left(\log\frac{p_{m}(x,y)}{h(x,y)}\right)\cdot \nabla\left(\log\frac{p_{n}(x,y)}{h(x,y)}\right)}\leq \frac{C_{d,A}}{y^{2}}.
\end{equation*}
Note that if $0<y<w/2$, then $g(y/w)\leq C$ for some $C>0$. By Lemma~\ref{lem:greenlimit}, $$G_w(x,y)/p_n(0,w)\leq 
C y.$$ Thus, we have 
\begin{align*}
    j_{n,m}^{(1)}(x,y,w)
    &=\frac{G_{w}(x,y)}{p_{n}(0,w)} \frac{p_m p_n}{h}A \nabla \left(\log \frac{p_m}{h}\right)\cdot  \nabla \left(\log \frac{p_n}{h}\right) \\
    &\leq \frac{C_{d,A}p_m(x,y) p_n(x,y)}{yh(x,y)}.
\end{align*} 
If $y\ge 1$, then it follows from Lemma~\ref{lem:hlimit} that 
\begin{align*}
	\frac{p_{m}(x,y)p_{n}(x,y)}{h(x,y)}
    \leq C p_m(x,y)p_n(x,y)
    \leq C_{d,m,n}\frac{y^2}{(|x|^2+y^2)^{d+1}}.
\end{align*}	
Since 
\begin{align*}
    \int_1^\infty \int_{\R^d} \frac{y}{(|x|^2+y^2)^{d+1}}\, dxdy<\infty,
\end{align*}
it follows from the dominated convergence theorem that
\begin{align*}
    \lim_{w\to\infty}\int_1^\infty \int_{\R^d} j_{n,m}^{(1)}(x,y,w)\, dxdy
    = \int_1^\infty\int_{\R^{d}} 2y h(x,y) A(x,y)\nabla \left(\frac{p_m}{h}\right)\cdot \nabla \left(\frac{p_n}{h}\right)\, dxdy.
\end{align*} 
Suppose $0<y<1$ and $n\neq m$. Using $p_m(x,y) + p_n(x,y)\leq h(x,y)$, we get 
\begin{align*}
	\frac{p_{m}(x,y)p_{n}(x,y)}{h(x,y)}
	&\leq \frac{c_{d}y}{(|x-n|^{2}+y^{2})^{\frac{d+1}{2}}+(|x-m|^{2}+y^{2})^{\frac{d+1}{2}}} \\
	&\leq \frac{2^{\frac{d-1}{2}}c_{d}y}{(|x-n|^{2}+|x-m|^{2}+2y^{2})^{\frac{d+1}{2}}} \\
	&{=} \frac{c_{d}y}{2(|x-(\frac{n+m}{2})|^{2}+y^{2}+|\frac{n+m}{2}|^{2})^{\frac{d+1}{2}}} \\
	&\leq C_{d,n,m}\frac{y}{(|x|^{2}+1)^{\frac{d+1}{2}}}.
\end{align*}	
Since 
\begin{align*}
    \int_0^1\int_{\R^d} \frac{1}{(|x|^{2}+1)^{\frac{d+1}{2}}}\, dxdy<\infty,
\end{align*}
we obtain that 
\begin{align*}
    \lim_{w\to\infty}\int_0^1 \int_{\R^d} j_{n,m}^{(1)}(x,y,w)\, dxdy
    = \int_0^1\int_{\R^{d}} 2y h(x,y) A(x,y)\nabla \left(\frac{p_m}{h}\right)\cdot \nabla \left(\frac{p_n}{h}\right)\, dxdy
\end{align*} 
when $n\neq m$. 

Suppose $0<y<1$ and $n=m$. For $f=\delta_n$, it follows from It\^o's formula that
\begin{align*}
    M_t^f 
    = u_f(Z_t)
    = u_f(0,w)+\int_0^t \nabla u_f(Z_s)\cdot dB_s
    =\frac{p_n(0,w)}{h(0,w)}+\int_0^t \nabla\left(\frac{p_n(Z_s)}{h(Z_s)}\right)\cdot dB_s.
\end{align*}
Note that $\rE_{(0,w)}[M_\tau^f|X_\tau=n]=1$. 
Applying~\eqref{kernel3} with $A(x,y)=\mathrm{Id}$, we get
\begin{align*}
    \rE_{(0,w)}\left[\int_0^\tau \nabla\left(\frac{p_n(Z_s)}{h(Z_s)}\right)\cdot dB_s\bigg|X_\tau=n\right]
    = \int_0^\infty \int_{\R^d} \frac{G_w(x,y)}{p_n(0,w)}h(x,y) \left|\nabla\left(\frac{p_n}{h}\right)\right|^2\, dxdy,
\end{align*}
which leads to
\begin{align}\label{Ito}
    \int_0^\infty \int_{\R^d} \frac{G_w(x,y)}{p_n(0,w)}h(x,y) \left|\nabla\left(\frac{p_n}{h}\right)\right|^2\, dxdy = 1-\frac{p_n(0,w)}{h(0,w)}.
\end{align}
Fix $N>0$. Note that it follows from the proof of Lemma~\ref{lem:greenlimit} that there exists a constant $C$ depending only on $d$ such that for large $w$,
\begin{align*}
    \frac{G_w(x,y)}{p_n(0,w)}\ge C y
\end{align*}
for all $0<y<1$ and $|x|\le N$. Thus, we obtain
\begin{align*}
    \int_0^1\int_{|x|\le N} y h(x,y) \left|\nabla\left(\frac{p_n}{h}\right)\right|^2\, dxdy \leq C.
\end{align*}
By Lemma~\ref{lem:greenlimit} and the dominated convergence theorem, we get
\begin{align*}
    \lim_{w\to\infty}\int_0^1\int_{|x|\le N}j_{n,n}^{(1)}(x,y,w)\, dxdy 
    = \int_0^1\int_{|x|\le N} 2y h(x,y) A(x,y)\nabla \left(\frac{p_n}{h}\right)\cdot \nabla \left(\frac{p_n}{h}\right)\, dxdy.
\end{align*}
Since $|j_{n,n}^{(1)}{(\cdot, \cdot, w)}|$ is integrable over $\R^d{{} \times (0, 1)}$, we have
\begin{align*}
    \int_0^1\int_{|x|\ge N}j_{n,n}^{(1)}(x,y,w)\, dxdy \to 0
\end{align*}
as $N\to \infty$. Using the previous argument, we see that 
\begin{align*}
    |2y h(x,y) A(x,y)\nabla \left(\frac{p_n}{h}\right)\cdot \nabla \left(\frac{p_n}{h}\right)|
    \le C\frac{p_n(x,y)^2}{yh(x,y)}
    \le C\frac{p_n(x,y)}{y}
    = \frac{C}{(|x-n|^2+y^2)^{(d+1)/2}}.
\end{align*}
Since the integral of $(|x-n|^2+y^2)^{-(d+1)/2}$ over $|x|\ge N$ converges to 0 as $N\to\infty$, we get
\begin{align*}
    \lim_{w\to\infty}\int_0^1\int_{\R^d}j_{n,n}^{(1)}(x,y,w)\, dxdy 
    = \int_0^1\int_{\R^d} 2y h(x,y) A(x,y)\nabla \left(\frac{p_n}{h}\right)\cdot \nabla \left(\frac{p_n}{h}\right)\, dxdy
\end{align*}
as desired. 

For the other integral, we have
\begin{align*}
    j_{n,m}^{(2)}(x,y,w)
	&\leq C_{d,n,m,A}\frac{y^{2}}{(|x|^{2}+y^{2})^{d+1}} \cdot {\frac{C_d w}{(|\frac{x}{w}|^2 + |\frac{y}{w} - 1|^2)^{(d - 1) / 2}}} 
\cdot \frac{1}{y^{2}} \\ 
	&{{} = C_{d,n,m,A} \frac{1}{w^{2 d + 1} (|\frac{x}{w}|^2 + |\frac{y}{w}|^2)^{d + 1} (|\frac{x}{w}|^2 + |\frac{y}{w} - 1|^2)^{(d - 1) / 2}}} .
\end{align*}
Thus,
\begin{align*}
	{\int_0^{\infty}\int_{\R^{d}} j_{n,m}^{(2)}(x,y,w) \,dxdy}
	& { \le \frac{C_{d,n,m,A}}{w^{2 d + 1}} \int_{\frac{w}{2}}^{\infty}\int_{\R^{d}} \frac{1}{(|\frac{x}{w}|^2 + |\frac{y}{w}|^2)^{d + 1} (|\frac{x}{w}|^2 + |\frac{y}{w} - 1|^2)^{(d - 1) / 2}} \, dxdy} \\
	& {= \frac{C_{d,n,m,A}}{w^d} \int_{\frac{1}{2}}^{\infty}\int_{\R^{d}} \frac{1}{(|v|^2 + s^2)^{d + 1} (|v|^2 + |s - 1|^2)^{(d - 1) / 2}} \, dvds}
\end{align*}
and the right-hand side clearly converges to zero as $w \to \infty$.
\end{proof}

\begin{theorem}\label{Proj} 
Let $f\in\ell^p(\Z^d)$, $1<p<\infty$,  and $A(x,y)$ be a matrix-valued function with $\|A\|<\infty$. Then 
\begin{equation}\label{projTH-gen}
    \norm{T_A(f)}_{\ell^{p}} \leq (p^\ast-1)\|A\|\norm{f}_{\ell^{p}}. 
\end{equation}
If in addition, $A(x,y)$ is orthogonal for all $(x,y)\in\R^d\times \R_+$, then 
\begin{equation}\label{projTH-gen-2}
    \norm{T_A(f)}_{\ell^{p}} \leq \cot\left(\frac{\pi}{2p^{\ast}}\right)\|A\|\norm{f}_{\ell^{p}}.	
\end{equation}
\end{theorem}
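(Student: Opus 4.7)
The plan is to transfer the martingale transform bounds of Theorem~\ref{thm:MartTrans} to the discrete setting via the conditional expectation representation $T_A^w(f)(n) = \rE_{(0,w)}[(A\star M^f)_\tau \mid X_\tau = n]$, and then send $w \to \infty$. Three ingredients make this work: conditional Jensen's inequality, the boundary identity $M^f_\tau = f(X_\tau)$ (which holds because $\lim_{y\to 0} p_m(n,y)/h(n,y) = \Psi(n-m) = \delta_{nm}$, so $u_f(n,0) = f(n)$ for $f \in \ell^p_c(\Z^d)$), and the explicit exit distribution $\pr_{(0,w)}(X_\tau = n) = p_n(0,w)/h(0,w)$ of the Doob $h$-process.

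First I would take $f \in \ell^p_c(\Z^d)$ and work at finite $w > 0$. Conditional Jensen applied to $t \mapsto |t|^p$ yields $|T_A^w(f)(n)|^p \leq \rE_{(0,w)}[|(A\star M^f)_\tau|^p \mid X_\tau = n]$. Multiplying by $p_n(0,w)/h(0,w)$ and summing over $n$, then applying Theorem~\ref{thm:MartTrans} together with $M^f_\tau = f(X_\tau)$, I obtain the weighted inequality
\[
\sum_n |T_A^w(f)(n)|^p \frac{p_n(0,w)}{h(0,w)} \leq c_p^p \|A\|^p \sum_n |f(n)|^p \frac{p_n(0,w)}{h(0,w)},
\]
where $c_p = p^\ast - 1$ in general and $c_p = \cot(\pi/(2p^\ast))$ when $A$ is orthogonal.

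Next I would renormalize to recover the unweighted $\ell^p$ norm. Multiplying the previous display by $h(0,w)/p_0(0,w)$, the weights become $p_n(0,w)/p_0(0,w) = (1 + |n|^2/w^2)^{-(d+1)/2}$, which are bounded by $1$ and converge pointwise to $1$ as $w \to \infty$. Since $f$ is compactly supported, dominated convergence gives that the right-hand side tends to $c_p^p \|A\|^p \|f\|_{\ell^p}^p$. For the left-hand side, I would fix $N > 0$ and restrict the sum to $|n| \leq N$; this truncated sum is bounded by the full weighted sum, hence by the right-hand side. Since $T_A^w(f)(n) = \sum_m K_A^w(n,m)\,f(m)$ is a finite sum in $m$ and $K_A^w(n,m) \to K_A(n,m)$ by Theorem~\ref{thm:TaIntRep}, I have $T_A^w(f)(n) \to T_A(f)(n)$ pointwise. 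Passing to the limit $w \to \infty$ in the truncated inequality and then letting $N \to \infty$ yields $\|T_A(f)\|_{\ell^p} \leq c_p \|A\| \|f\|_{\ell^p}$ for $f \in \ell^p_c(\Z^d)$; a standard density argument extends this to all of $\ell^p(\Z^d)$, which simultaneously gives a canonical meaning to $T_A$ on the whole space.

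The main obstacle is the passage from the probability-weighted inequality to the counting-measure inequality. As $w \to \infty$ the starting point recedes to infinity and the exit distribution spreads out; after renormalization by $p_0(0,w)$ it converges pointwise (but not in total variation) to the counting measure on $\Z^d$. Consequently the interchange of limit and sum on the left side cannot be performed by dominated convergence, since a priori no integrable majorant for $|T_A^w(f)|^p$ is available. Truncating to $|n|\leq N$ before taking $w\to\infty$ and only afterwards letting $N\to\infty$ sidesteps this issue; the orthogonality hypothesis, when present, plays no role beyond invoking the sharper constant in part~(ii) of Theorem~\ref{thm:MartTrans}.
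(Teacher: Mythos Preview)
Your proof is correct and follows essentially the same approach as the paper: conditional Jensen plus Theorem~\ref{thm:MartTrans} at level $w$, then a limiting argument. The only cosmetic difference is that the paper normalizes by $w^d/c_d$ rather than $h(0,w)/p_0(0,w)$ (these differ by the harmless factor $h(0,w)\to 1$) and handles the left-hand side by a direct appeal to Fatou's lemma on $\Z^d$ instead of your truncate-then-exhaust maneuver; the two devices are equivalent here.
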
 
\begin{proof} 
Suppose $f:\Z^d\to\R$ is compactly supported. By the sharp martingale inequality (Theorem~\ref{thm:MartTrans}) and Jensen's inequality for conditional expectations, we have 
\begin{align}\label{eq:sharpmgineq1-gen}
	\sum_{n\in \Z^d}|T^{w}_A(f)(n)|^{p}\frac{p_{n}(0,w)}{h(0,w)}
	&= \rE_{(0,w)}[|T^{w}_A(f)(X_{\tau})|^{p}] \\\nonumber
    &= \rE_{(0,w)}[|\rE_{(0,w)}[(A \ast M^f)_{\tau}|X_{\tau}]|^{p}] \\\nonumber
    &\le \rE_{(0,w)}[\rE_{(0,w)}[|(A \ast M^f)_{\tau}|^{p}|X_{\tau}]] \\\nonumber
	&= \rE_{(0,w)}[|(A \star M^{f})_{\tau}|^{p}] \\\nonumber
	&\leq (p^\ast -1)^{p}\|A\|^p\rE_{(0,w)}[|M^{f}_{\tau}|^{p}] \\\nonumber
	&= (p^\ast -1)^{p}\|A\|^p\sum_{n\in \Z^d}|f(n)|^{p}\frac{p_{n}(0,w)}{h(0,w)}.
\end{align}
Since
\begin{equation*}
	p_{n}(0,w)
	=\frac{c_{d}w}{(|n|^{2}+w^{2})^{\frac{d+1}{2}}}
	=\frac{c_{d}}{w^{d}}\frac{1}{(\frac{|n|^{2}}{w^{2}}+1)^{\frac{d+1}{2}}}
    \le \frac{c_{d}}{w^{d}},
\end{equation*}
we get
\begin{align*}
    \frac{1}{c_d}\sum_{n\in \Z^d}|T^{w}_A(f)(n)|^{p} w^d p_{n}(0,w)
	&\le (p^\ast -1)^{p}\|A\|^p\sum_{n\in \Z^d}|f(n)|^{p}.
\end{align*}
Recall that $T_A(f)(n)$ is the (pointwise) limit of $T_A^w(f)(n)$. By Fatou's lemma, we get 
\begin{align*}
	\sum_{n\in \Z^d}|T_A(f)(n)|^{p}
    &\leq \liminf_{w\to\infty} \frac{1}{c_d}\sum_{n\in \Z^d}|T^{w}_A(f)(n)|^{p} w^d p_{n}(0,w)\\
	&\leq (p^\ast -1)^{p}\|A\|^p\sum_{n\in \Z^d}|f(n)|^{p},
\end{align*}
which proves \eqref{projTH-gen}. The proof of  \eqref{projTH-gen-2} follows from the same argument using the second part of Theorem~\ref{thm:MartTrans}.  
\end{proof} 
The following Littlewood--Paley inequality is the analogue in our current setting of the inequalities in \cite[Corollaries 3.42 and 3.9.2]{Ban}.  

\begin{corollary}
Let $p\in(1,\infty)$, $q=p/(p-1)$, $f\in \ell^p(\Z^d)$, and $g\in \ell^q(\Z^d)$, then
\begin{align*}
    \int_0^\infty\int_{\R^d} 2yh(x,y)|\nabla u_f(x,y)||\nabla u_g(x,y)|\, dxdy
    \le (p^\ast -1) \norm{f}_{{\ell^{p}}}\norm{g}_{{\ell^{q}}}.
\end{align*}
\end{corollary}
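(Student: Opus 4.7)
The plan is to realize the integral on the left-hand side as a pairing $\langle T_A f, g\rangle_{\ell^2}$ for a cleverly chosen matrix-valued function $A$, and then invoke Theorem~\ref{Proj} together with H\"older's inequality.

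First I would verify the following duality identity for compactly supported $f,g:\Z^d\to\R$ and any bounded matrix field $A$: expanding $u_f=\sum_m f(m)p_m/h$ and $u_g=\sum_n g(n)p_n/h$, exchanging the (finite) sums with the integral, and using the kernel representation \eqref{kernelrep}, one gets
\begin{equation*}
\sum_{n\in\Z^d}T_A(f)(n)\,g(n)
=\int_{\R^d}\int_{\R_+}2yh(x,y)\,A(x,y)\nabla u_f(x,y)\cdot\nabla u_g(x,y)\,dydx.
\end{equation*}
This is the bridge between the discrete and the continuous pictures.

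Next I would choose $A$ so that the integrand of the right-hand side becomes the desired product of norms. Set
\begin{equation*}
A(x,y)=\frac{\nabla u_g(x,y)}{|\nabla u_g(x,y)|}\otimes\frac{\nabla u_f(x,y)}{|\nabla u_f(x,y)|},
\end{equation*}
with $A(x,y)=0$ at points where either gradient vanishes. This rank-one matrix satisfies $\|A\|\le 1$, and a direct computation gives $A(x,y)\nabla u_f(x,y)\cdot\nabla u_g(x,y)=|\nabla u_f(x,y)|\,|\nabla u_g(x,y)|$. The duality identity together with H\"older's inequality and the bound \eqref{projTH-gen} of Theorem~\ref{Proj} then yields, for compactly supported $f,g$,
\begin{equation*}
\int_0^\infty\!\int_{\R^d}2yh(x,y)\,|\nabla u_f|\,|\nabla u_g|\,dxdy
=\langle T_A f,g\rangle_{\ell^2}
\le\|T_Af\|_{\ell^p}\|g\|_{\ell^q}
\le(p^\ast-1)\|f\|_{\ell^p}\|g\|_{\ell^q}.
\end{equation*}

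Finally, I would remove the compact support assumption by approximation. Take $f_k\to f$ in $\ell^p$ and $g_k\to g$ in $\ell^q$ with $f_k,g_k$ compactly supported; since $u_{f_k}\to u_f$ and $u_{g_k}\to u_g$ together with their gradients pointwise on $\R^d\times\R_+$ (each term in \eqref{eq:def_harmonic} is smooth in $(x,y)$), Fatou's lemma applied to the nonnegative integrand $2yh|\nabla u_{f_k}||\nabla u_{g_k}|$ transfers the inequality to the limit. The only delicate point is the choice of $A$, because it depends on the gradients of $u_f$ and $u_g$; but since the only property of $A$ used in Theorem~\ref{Proj} is the uniform bound $\|A\|\le 1$, this dependence is harmless. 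This, rather than any analytic subtlety, is the main conceptual step of the argument.
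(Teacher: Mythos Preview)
Your proposal is correct and follows essentially the same approach as the paper: both use the duality identity coming from the kernel representation~\eqref{kernelrep}, choose the same rank-one matrix $A$ (you write it as $\tfrac{\nabla u_g}{|\nabla u_g|}\otimes\tfrac{\nabla u_f}{|\nabla u_f|}$, the paper writes out its entries $A_{ij}=\partial_j u_f\,\partial_i u_g/(|\nabla u_f||\nabla u_g|)$), and then apply H\"older together with Theorem~\ref{Proj}. You additionally supply the approximation step extending from compactly supported $f,g$ to general $\ell^p\times\ell^q$, which the paper's proof omits.
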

\begin{proof}
Let $A(x,y)$ be a matrix-valued function with $\|A\|<\infty$. Assume that $f$ and $g$ have compact supports. By~\eqref{kernelrep} and~Theorem~\ref{Proj}, we have
\begin{align*}
    \Big| \sum_{n\in\Z^d} T_A f(n)g(n)\Big|
    &= \Big|\sum_{n,m\in\Z^d} \left( \int_{\R^{d}}\int_{\R_{+}} 2y h(x,y) A(x,y)\nabla \left(\frac{p_m}{h}\right)\cdot \nabla \left(\frac{p_n}{h}\right)\, dydx \right) f(m)g(n)\Big|\\
    &= \Big|\int_{\R^{d}}\int_{\R_{+}} 2y h(x,y) A(x,y)\nabla u_f(x,y) \cdot \nabla u_g(x,y)\, dydx\Big|\\
    &\le (p^\ast -1)\|A\|\norm{f}_{{\ell^{p}}}\norm{g}_{{\ell^{q}}}.
\end{align*}
Define
\begin{align*}
    A_{ij}(x,y) =\frac{\partial_j u_f(x,y) \partial_i u_g(x,y)}{|\nabla u_f(x,y)||\nabla u_g(x,y)|}
\end{align*}
for $1\le i,j\le d+1$. Here, we used the notations $\partial_i=\partial_{x_i}$ for $i=1,2,\cdots,d$ and $\partial_{d+1}=\partial_y$. Since $\|A\|\le 1$ and $A\nabla u_f \nabla u_g =|\nabla u_f ||\nabla u_g|$, the proof is complete.
\end{proof}

\begin{remark}
Let $a:\Z^d\to (0,\infty)$. We define $h_a(x,y) = \sum_{n\in\Z^d} a(n)p_n(x,y)$ and
\begin{align*}
    u^a_f(x,y) = \sum_{n\in\Z^d} f(n)\frac{a(n)p_n(x,y)}{h^a(x,y)}.
\end{align*}
For $f:\Z^d\to \R$ we define its weighted $\ell^p(a)$ norm by
$$\norm{f}_{\ell^{p}(a)}=\left(\sum_{n\in \Z^d}|f(n)|^p a(n)\right)^{1/p}.$$ 
Following the same argument with $h_a(x,y)$, and under the assumption that 
$$\sum_{n\neq 0}a(n) |n|^{-d-1}<\infty,$$
which guarantees that the harmonic function $h_a(x,y)$ is well-defined, we can define a discrete operator $T^{h_a}_A$ associated with $h_a$ and obtain the weighted norm inequalities:
\begin{equation}\label{weight1}
    \norm{T^{h_a}_A f}_{\ell^{p}(a)} \leq (p^\ast-1)\|A\|\norm{f}_{\ell^{p}(a)},  
\end{equation}
for any $A$.  If in addition  $A$ is orthogonal then, 
\begin{equation}\label{weight2}
    \norm{T^{h_a}_A f}_{\ell^{p}(a)}\leq \cot\left(\frac{\pi}{2p^{\ast}}\right)\|A\|\norm{f}_{\ell^{p}(a)}.	
\end{equation}
\end{remark}

\section{Discrete Calder\'on--Zygmund operators}
\subsection{Discrete Calder\'on--Zygmund operators and norm estimates}\label{CZO}  
Let $T$ be an operator acting on the Schwartz space of rapidly decreasing functions on $\R^d$. We say $T$ is a Calder\'on--Zygmund operator if it is bounded in $L^2$ and can be written as 
\begin{align}\label{CZK-1} 
    Tf(x) =p.v. \int_{\R^d} K(x,z) f(z)\, dz
\end{align}
where $K$ is continuously differentiable off the diagonal with the bounds 
\begin{align}\label{CZK-2}
    |K(x,z)|\leq \frac{\kappa}{|x-z|^{d}},\quad
    |\nabla_x K(x,z)|\leq \frac{\kappa}{|x-z|^{d+1}},\quad
    |\nabla_z K(x,z)|\leq \frac{\kappa}{|x-z|^{d+1}},
\end{align}
for $x\neq z$, for some universal constant $\kappa$.

The Calder\'on--Zygmund operator $T$ as above are bounded in $L^p$, for $1<p<\infty$ (see~\cite[Chapter 8]{Graf}). Here we will consider Calder\'on--Zygmund operators which are of convolution type. That is, their kernels are of the form $K(x,z)=K(x-z)$ satisfying 
\begin{align}\label{eq:ConK-def}
    K\in C^1(\R^d\setminus \{0\}), \quad |K(z)|\leq \kappa|z|^{-d}, \quad |\nabla K(z)|\leq \kappa|z|^{-(d+1)}, 
\end{align}
for some universal constant $\kappa$.
 
For these operators, Calder\'on and Zygmund \cite{CZ}  defined their discrete analogues  by
\begin{align}\label{eq:Tdis-def}
    \Tdis (f)(n)=\sum_{m\in\Z^d\setminus\{n\}}K(n-m)f(m),\quad f\in\ell^p(\Z^d).
\end{align} 
As already mentioned in the introduction, M.~Riesz \cite{Riesz} showed that in dimension~1, the boundedness of $H$ on $L^p(\R)$ implies the boundedness of $H_{\dis}$ on $\ell^p(\Z)$. In the  ``Added in proof'' section of their paper Calder\'on and Zygmund observed that the boundedness of $T$  on $L^p(\bR^d)$ leads to the boundedness of $\Tdis$  on $\ell^p(\Z^d)$. In fact, Calder\'on and Zygmund simply  remark{ed} (\cite[pg.~138]{CZ}) that ``for $n=1$ this remark is due to M. Riesz, and the proof in the case of general $n$ follows a similar pattern'' (here their $n=d$) and no further details are provided. For the sake of completeness and because we wish to keep track of constants, we provide the proof here. Recall that the truncated operator $T_\varep$ is defined by $T_\varep(f)=K_\varep\ast f$ where $K_\varep(x)=K(x)\ind_{\{|x|\ge \varep\}}$, $T_{\varep}$ satisfies $\|T_{\varep}f \|_{L^p}\leq C_p\|f\|_{L^p}$, where $C_p$ is independent of $\varep$ and $\lim_{\varep\to 0}T_{\varep}f$ exists in $L^p(\R^d)$ and a.e. We denote the limit operator by $T$.

\begin{proposition}\label{thm:RCZ}
Let $T$ be the Calder\'on--Zygmund operator with convolution kernel $K(x)$ satisfying \eqref{eq:ConK-def}. Then, $\Tdis$ is bounded on $\ell^p(\Z^d)$, $1<p<\infty$. Furthermore, we have 
$$\|T_{\dis}\|_{\ell^p\to\ell^p}\leq \|T_1\|_{L^p\to L^p}+C(d, \kappa).$$ 
\end{proposition}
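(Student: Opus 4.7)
The plan is to compare $\Tdis$ to the $L^p$-bounded continuous truncation $T_1$ via piecewise-constant interpolation. Let $Q = [-\tfrac12,\tfrac12)^d$ and define $\tilde f:\R^d\to\R$ by $\tilde f(x) = f(n)$ for $x\in n+Q$, so that $\|\tilde f\|_{L^p(\R^d)} = \|f\|_{\ell^p(\Z^d)}$. Using $|m+Q|=1$, I would rewrite, for any $x\in n+Q$,
\[
\Tdis f(n) = \sum_{m\neq n} f(m) \int_{m+Q} K(n-m)\, dy, \qquad T_1\tilde f(x) = \sum_{m\in\Z^d} f(m) \int_{m+Q} K(x-y)\ind_{|x-y|\ge 1}\, dy,
\]
and estimate $\Tdis f(n) - T_1\tilde f(x)$ term-by-term by splitting on $|n-m|$.

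Fix a dimensional threshold $R_d := 2\sqrt{d}+2$. For \emph{far} indices $|n-m|>R_d$, the indicator $\ind_{|x-y|\ge 1}$ is identically $1$ on the relevant region, and the mean value theorem applied to $K$, together with $|(n-m)-(x-y)|\le \sqrt{d}$ and the lower bound $|z|\ge |n-m|/2$ along the segment, yields
\[
\left|\int_{m+Q} \bigl[K(n-m) - K(x-y)\bigr]\, dy\right| \le \frac{C_d \kappa}{|n-m|^{d+1}}.
\]
For the \emph{near} indices $1\le |n-m|\le R_d$ (finitely many) and the self-term $m=n$ (where the observation $\int_{n+Q} |K(x-y)|\ind_{|x-y|\ge 1}\, dy \le \int_{1\le |z|\le \sqrt{d}} \kappa |z|^{-d}\, dz \le C_d\kappa$ saves the day), the crude size bound $|K(z)|\le \kappa$ on the truncation region is enough. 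Assembling everything yields, uniformly in $x\in n+Q$,
\[
|\Tdis f(n)| \le |T_1\tilde f(x)| + (|f|*\phi)(n),
\]
where $\phi\in\ell^1(\Z^d)$ satisfies $\|\phi\|_{\ell^1}\le C(d,\kappa)$.

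To conclude, I would take the $L^p(n+Q)$ norm in $x$ on both sides (the left-hand side is constant in $x$ and $|n+Q|=1$), raise to the $p$-th power, sum in $n$, and invoke Minkowski and Young's convolution inequality to get
\[
\|\Tdis f\|_{\ell^p} \le \|T_1\tilde f\|_{L^p} + \|\,|f|*\phi\,\|_{\ell^p} \le \bigl(\|T_1\|_{L^p\to L^p} + C(d,\kappa)\bigr)\|f\|_{\ell^p}.
\]
The main obstacle is the interaction between the continuous truncation radius $1$ and the lattice spacing: $R_d$ must be large enough that for $|n-m|>R_d$ one \emph{simultaneously} has $|x-y|\ge 1$ for all $x\in n+Q$, $y\in m+Q$ (so the indicator drops out) and $|n-m-\theta|\ge |n-m|/2$ along the mean-value segment (so the gradient bound is uniform). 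Outside this regime both the truncation-free representation and the usable smoothness fail, but this involves only finitely many lattice points, which is precisely why the error absorbs into the additive constant $C(d,\kappa)$ rather than multiplying $\|T_1\|_{L^p\to L^p}$.
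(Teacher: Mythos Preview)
Your argument is correct. Both proofs rest on the same quantitative fact---that replacing $K(n-m)$ by the cube-averaged kernel $\int_{m+Q}K_1(x-y)\,dy$ produces an error that decays like $|n-m|^{-(d+1)}$ and hence defines an $\ell^1$ convolution kernel---but they package it differently. The paper follows Riesz's original duality argument: it pairs $\Tdis f$ with an arbitrary $g\in\ell^q$, writes $\langle T_1 F,G\rangle$ for the piecewise-constant extensions $F,G$, and identifies this with $\langle \Tdis f,g\rangle$ plus an error bilinear form with kernel $\wt{K_1}(n-m)=\int_Q\int_Q[K_1(m-n+s-t)-K_1(m-n)]\,dt\,ds$; H\"older then gives the additive constant directly. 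You instead compare $\Tdis f(n)$ to $T_1\tilde f(x)$ pointwise for $x\in n+Q$, bound the discrepancy by $(|f|*\phi)(n)$ with $\phi\in\ell^1$, and then recover the $\ell^p$ bound by averaging over the cube and invoking Minkowski plus Young. Your route avoids introducing the dual function, at the price of the extra averaging/Minkowski step; the paper's route is more symmetric and slightly shorter, but requires the duality setup. The paper itself notes (just after its proof) that a non-duality argument in the spirit of yours appears in~\cite{Laeng} for the Hilbert transform.
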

\begin{proof}
As asserted by Calder\'on and Zygmund, the proof follows the argument of Riesz. Let $p\in(1,\infty)$ and $q$ be the conjugate exponent, that is, $\frac1p+\frac1q=1$. Let $f\in\ell^p(\Z^d)$ and $g\in\ell^q(\Z^d)$.  Define $F\in L^p(\R^d)$ and $G\in L^q(\R^d)$ by $F(x)=f(n)$ and $G(x)=g(n)$ for $x\in n+Q$, $Q=[-\frac12,\frac12)^d$. Then,
\begin{align*}
    \int_{\R^d} T_{1}(F)(x)G(x)\, dx
    &= \int_{\R^d}\int_{\R^d} K_{1}(y-x)F(x)G(y)\, dxdy\\
    &= \sum_{n,m\in\Z^d}\left(\int_{m+Q}\int_{n+Q}K_{1}(y-x)\, dxdy\right)f(n)g(m)\\
    &= \sum_{n\in\Z^d}\Tdis(f)(n) g(n) + \sum_{n,m\in\Z^d}\wt{K_1}(n-m) f(n)g(m),
\end{align*}
where 
\begin{align*}
    \wt{K_{1}}(n-m)
    &=\int_{m+Q}\int_{n+Q}K_{1}(y-x)\, dxdy-K_{1}(m-n)\\
    &=\int_{Q}\int_{Q}(K_{1}(m-n+s-t)-K_{1}(m-n))\, dtds.
\end{align*}

Using $|\nabla K_{1}(x-z)|\leq \kappa|x-z|^{-(d+1)}$, we have
\begin{align*}
    |\wt{K_{1}}(n-m)|\leq C(d, \kappa) |n-m|^{-(d+1)},     
\end{align*}
for $|m-n|$ large enough. Since $|m|^{-(d+1)}$ is summable, we have
\begin{align*}
    \left | \sum_{n,m\in\Z^d}\wt{K_1}(n,m) f(n)g(m)\right |
    &\leq \left( \sum_{n,m\in\Z^d}|\wt{K_1}(n,m)| |f(n)|^p \right)^{1/p} \left( \sum_{n,m\in\Z^d}|\wt{K_1}(n,m)| |g(m)|^q \right)^{1/q}\\
    &\leq C(d, \kappa)\|f\|_{\ell^p}\|g\|_{\ell^q} 
\end{align*}
and this gives 
$$\|T_{\dis}\|_{\ell^p\to \ell^p}\leq \|T_{1}\|_{L^p\to L^p}+C(d, \kappa),$$
where the constant $C(d, \kappa)$ depends on $d$ and $\kappa$ but not on $p$.  
\end{proof}

Riesz's argument was modified  in \cite{HunMukWhe} to prove a discrete $A_p$-weighted version of the celebrated Hunt-Muckenhoupt-Wheeden weighted norm inequality for the Hilbert transform.  See also \cite[Theorem 4.6]{Laeng}.  

In~\cite{Titc26}, Titchmarsh gave (with a slightly different version of $H$) a different proof of Riesz's theorem by first showing that $H_{\dis}$ is bounded on $\ell^p$ and from this that $H$ is bounded in $L^p$ and that in fact $\|H\|_{L^p\to L^p}\leq \|H_{\dis}\|_{\ell^p\to\ell^p}$. We show next that a similar result holds for singular integrals that commute with dilations. More precisely, consider singular integrals with kernels of the form $K(x)=\frac{\Omega(x)}{|x|^{d}}$, where $\Omega$ is homogeneous of degree zero; $\Omega(rx)=\Omega(x),$ for all $r>0$. We assume that $\Omega$ satisfies the necessary hypothesis (see for example \cite[Theorem 3]{Stein70}) so that the singular integral is bounded on $L^p$.  That is, (i) $\Omega$ is bounded, (ii) Dini continuous, and (iii) its integral on the sphere is $0$.

\begin{theorem}\label{homeker} 
Suppose $K(x)=\frac{\Omega(x)}{|x|^{d}}$ satisfying (i)--(iii) 
and $\Omega(-x)=-\Omega(x)$ for all $x\in\R^d\setminus\{0\}$. For $1<p<\infty$, we have $ \|T\|_{L^p\to L^p}\leq \|T_{\dis}\|_{\ell^p\to \ell^p}$.
\end{theorem}
\begin{proof}
We define the continuous-discrete operator $\wt{T}_{\dis}$ on $L^p(\R^d)$ by
\begin{align*}
    \wt{T}_{\dis}(F)(x) = \sum_{n\in\Z^d\setminus\{0\}}K(n)F(x-n), \quad F\in L^p, \, x\in\R^d. 
\end{align*} 
Let $Q={[-\frac12, \frac12)^d}
$. For $f\in \ell^p(\Z^d)$, let $F(x)=\sum_{n\in\Z^d} f(n) \ind_Q(x-n)$. Then, $\|F\|_{{L^p}}=\|f\|_{{\ell^p}}$ and
\begin{align*}
    \wt{T}_\dis (F)(n)
    &= \sum_{m\in\Z^d\setminus\{0\}}K(m) \sum_{l\in\Z^d}f(l) \ind_Q(n-m-l) \\
    &= \sum_{m\in\Z^d\setminus\{0\}} K(m) f(n-m)\\
    &= T_\dis(f)(n),
\end{align*}
which implies $\|T_{\dis}\|_{\ell^p\to \ell^p}\leq  \|\wt{T}_\dis\|_{L^p\to L^p}$. On the other hand, for any $F\in L^p$ we have, 
\begin{align*}
	\|\wt{T}_\dis F\|_{L^p}^p
    &= \int_{\R^d} \Big| \sum_{m\in\Z^d\setminus\{0\}} K(m) F(x-m)\Big|^p dx \\
    &= \sum_{n\in\Z^d\setminus\{0\}}\int_{Q} \Big| \sum_{m\in\Z^d\setminus\{0\}} K(m) F(x+n-m)\Big|^p dx \\
	&\leq \|T_\dis\|^p_{{\ell^p\to\ell^p}}\sum_{n\in\Z^d\setminus\{0\}}\int_{Q} |F(x+n)|^p dx \\
    &= \|T_\dis\|^p_{{\ell^p\to\ell^p}}\|F\|_{L^p}^p.
\end{align*}
Thus in fact, $\|\wt{T}_\dis\|_{{L^p\to L^p}}=\|T_\dis\|_{{\ell^p\to\ell^p}}$. 

Let $\varep>0$ and define $\tau_\varep F(x):=\varep^{\frac{d}{p}}F(\varep x)$. Then $\|\tau_\varep F\|_p=\|F\|_p$, for $F\in L^p(\R^d)$. Let $\wt{T}_\dis^\varep F(x)=\tau_{\frac{1}{\varep}}\wt{T}_\dis \tau_{\varep} F(x)$. Now suppose $F$ is smooth with compact support. Then 
\begin{align*}
	\wt{T}_\dis^\varep F(x)
    &= \sum_{m\in\Z^d\setminus\{0\}}K(m) F(x-\varep m) \\
    &= \sum_{m\in\Z^d\setminus\{0\}}\frac{\Omega(\varep m)}{|\varep m|^{d}}F(x-\varep m) \varep^{d}\\
    &= \frac{\varep^d}{2}\sum_{m\in\Z^d\setminus\{0\}}\frac{\Omega(\varep m)}{|\varep m|^{d}}(F(x-\varep m)-F(x+\varep m)).
\end{align*}
For each $r>0$, we have
\begin{align*}
    \lim_{\varep\to 0} \frac{\varep^d}{2}\sum_{\substack{m\in\Z^d\setminus\{0\},\\ |\varep m|>r}}\frac{\Omega(\varep m)}{|\varep m|^{d}}(F(x-\varep m)-F(x+\varep m))
    =\frac12 \int_{|y|>r}\frac{\Omega(y)}{|y|^d}(F(x-y)-F(x+y))\, dy.
\end{align*}
On the other hand, since $\Omega$ is bounded and $F$ is smooth on compact support, it follows that
\begin{align*}
    \Bigg|\frac{\varep^d}{2}\sum_{\substack{m\in\Z^d\setminus\{0\},\\ |\varep m|\le r}}\frac{\Omega(\varep m)}{|\varep m|^{d}}(F(x-\varep m)-F(x+\varep m))\Bigg|
    \le C \varep^d \sum_{\substack{m\in\Z^d\setminus\{0\},\\ |\varep m|\le r}} \frac{1}{|\varep m|^{d-1}}
    =  C r.
\end{align*}
Similarly, 
\begin{align*}
    \Bigg| \int_{|y|\le r}\frac{\Omega(y)}{|y|^d}(F(x-y)-F(x+y))\, dy\Bigg|
    \le C \int_{|y|\le r}|y|^{1-d}\, dy = Cr.
\end{align*}
Therefore, we get
\begin{align}\label{RiemSumConv}
    \lim_{\varep \to 0}\wt{T}_\dis^\varep F(x)
    &= \lim_{\varep \to 0}\frac{\varep^d}{2}\sum_{m\in\Z^d\setminus\{0\}}\frac{\Omega(\varep m)}{|\varep m|^{d}}(F(x-\varep m)-F(x+\varep m))\\\nonumber
    &=\frac12 \lim_{r\to 0}\int_{|y|>r}\frac{\Omega(y)}{|y|^d}(F(x-y)-F(x+y))\, dy\label{RiemSumConv}\\
    &=T(F)(x).\nonumber
\end{align}
By Fatou's lemma, we get
\begin{equation*}
    \|T(F)\|_{L^p} \leq \liminf_{\varep\downarrow0}\|\wt{T}_\dis^{\varep}(F)\|_{L^p}\leq \|\wt{T}_\dis(F)\|_{L^p},
\end{equation*}
which finishes the proof.
\end{proof}

The ``continuous-discrete operator'' versions have been used in several places to bound the norm of the continuous version by that of its discrete versions, see for example \cite{Laeng, Pierce}.  

\subsection{A conjecture on the $\ell^p$-norms of the discrete Riesz transforms}\label{Conjecture} 
The canonical examples of Calder\'on--Zygmund operators that satisfy the assumptions of both Proposition \ref{thm:RCZ} and Theorem \ref{homeker} are the classical Riesz transforms on $\R^d$ already defined in \eqref{classicalCRT-1}.

The Riesz transforms arise naturally from the Poisson semigroup and its connection to the Laplacian.  That is, if we let $P_yf(x)$ be the convolution of the function $f$ with the Poisson kernel $p(\cdot\, , y)$ as in \eqref{PoissonK}, then in fact, 
\begin{align}\label{SemigroupRiesz} 
    R^{(k)}f(x)=\int_0^{\infty} \frac{\partial}{\partial x_k}P_y f(x)\, dy=\frac{\partial }{\partial x_k}(-\Delta)^{-1/2}f(x).
\end{align}
With this interpretation, the Riesz transforms can be defined in a variety of analytic and geometric settings, including manifolds, Lie groups, and Wiener space. We briefly recall here the Gundy--Varopoulos \cite{GV79} representation of $R^{(k)}$, referring the reader to \cite{Ban86, Ban} for details and applications. Let $B_t$ be the standard Brownian motion  in the upper half-space $\R_{+}^{d+1}$  starting at the point $(0, y)$ and $\tau$ its exit time. Consider the conditional expectations operators 
\begin{equation}\label{GunVorRiesz1}
    \rE_{(0, y)}{\Big[}\int_0^{\tau} \bH^{(k)}\nabla U_f(B_s)\cdot dB_s \Big| B_{\tau}=x{\Big]},
\end{equation} 
where $\bH^{(k)}$ are the matrices in \eqref{HRiesz}, $U_f(x, y)=P_yf(x)$. Just as we do in Section \ref{ProjOP} for the discrete case, under the assumption that $f$ is sufficiently smooth, the quantity in \eqref{GunVorRiesz1} converges pointwise to $R^{(k)}f(x)$, as $y\to \infty$.

We remark here that verifying the convergence of \eqref{GunVorRiesz1} to the Riesz transforms is much simpler than the corresponding convergence results in Section \ref{ProjOP}; see for example \cite[p. 417]{Ban}. It is also worth mentioning here that the original Gundy--Varopoulos paper used the so called ``background radiation'' process in the construction. That the background radiation is not needed was shown in \cite{Ban86}.

It is proved in Laeng~\cite{Laeng09} that the $L^p$-norm of the truncated Hilbert transform $H_\varep$ 
\begin{align*}
    H_\varep f(x)=\frac{1}{\pi} \int_{|z|>\varep} \frac{f(x-z)}{z}\,dz,
\end{align*}
coincides with that of the Hilbert transform $H$. That is, $\|H_\varep\|_{p\to p}= \cot\left(\frac{\pi}{2p^\ast}\right)$ for every $\varep>0$.

The methods of rotations can be used to obtain the same for the truncated Riesz transforms. That is, let $K_\varep^{(k)}(z)=K^{(k)}(z)\ind_{\{|z|> \varep\}}$ and $R^{(k)}_\varep f(x)=K_\varep^{(k)}*f(x)$ be the truncated Riesz transform. Then,  
\begin{equation}\label{tranRiesz}
    \|R^{(k)}_\varep\|_{L^p\to L^p} = \cot\left(\frac{\pi}{2p^\ast}\right),
\end{equation} 
for all $\varep>0$. 
Combining \eqref{SCRiesz} with Fatou's Lemma, it suffices to show the upper inequality 
\begin{align}\|R^{(k)}_\varep\|_{L^p\to L^p} \leq \cot\left(\frac{\pi}{2p^\ast}\right).
\end{align} 
This follows from the method of rotations (\cite{Graf}*{Equation (4.2.17)}) applied with  $\Omega(y)=c_d\frac{y_k}{|y|}$ and the additional observation that 
\begin{align*}
    \int_{\bS^{d-1}}|\Omega(\theta)|\, d\theta
    =c_d \int_{\bS^{d-1}}|\theta_k|\, d\theta
    =c_d \frac{2\pi^{\frac{d-1}{2}}}{\Gamma\left(\frac{d+1}{2}\right)}
    =\frac{2}{\pi}.
\end{align*}

It follows from this and  Propositions  \ref{thm:RCZ} and Theorem  \ref{homeker}  that \eqref{prop1} holds.  That is, we have the following 

\begin{corollary}\label{bestweknow} 
For $1<p<\infty$, $k=1, 2, \dots, d$, we have 
$$\cot\left(\frac{\pi}{2p^{\ast}}\right)\leq \|R_{\dis}^{(k)}\|_{\ell^p\to \ell^p}\leq \cot\left(\frac{\pi}{2p^{\ast}}\right)+C_d,$$  
where $C_d$ is a dimensional constant. 
\end{corollary}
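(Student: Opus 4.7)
The plan is to read this off directly from the two boundedness principles proved earlier in the section, specialized to $K^{(k)}(z) = c_d z_k/|z|^{d+1}$. Since $|K^{(k)}(z)| \leq c_d |z|^{-d}$ and $|\nabla K^{(k)}(z)| \leq C'_d |z|^{-(d+1)}$, the kernel satisfies \eqref{eq:ConK-def} with a constant $\kappa$ depending only on $d$. So the CZ hypotheses of Proposition~\ref{thm:RCZ} are met, and the homogeneity/cancellation hypotheses behind Proposition~\ref{homeker} are met as well (the function $\Omega(x) = c_d x_k/|x|$ on $\mathbb{S}^{d-1}$ is bounded, smooth, has mean zero over the sphere, and has the requisite symmetry under $x \mapsto -x$ that drives the antisymmetric rearrangement in the proof of Proposition~\ref{homeker}).

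For the upper bound I would apply Proposition~\ref{thm:RCZ} with $T = R^{(k)}$, obtaining
\begin{equation*}
\|R^{(k)}_{\dis}\|_{\ell^p \to \ell^p} \leq \|R^{(k)}_1\|_{L^p \to L^p} + C(d,\kappa) = \|R^{(k)}_1\|_{L^p \to L^p} + C(d),
\end{equation*}
where the last equality absorbs $\kappa = \kappa(d)$. The norm of the truncated Riesz transform $R^{(k)}_1$ is identified by \eqref{tranRiesz} (the truncation of the Riesz transform at any scale $\varepsilon > 0$ has $L^p$ norm exactly $\cot(\pi/(2p^\ast))$, as explained via the method of rotations together with \eqref{bestR_jH}), yielding $\|R^{(k)}_{\dis}\|_{\ell^p \to \ell^p} \leq \cot(\pi/(2p^\ast)) + C(d)$.

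For the lower bound I would invoke Proposition~\ref{homeker} with the same kernel. That proposition asserts $\|T_{\dis}\|_{\ell^p \to \ell^p} \geq \|T\|_{L^p \to L^p}$, and by \eqref{bestR_jH} the right-hand side equals $\cot(\pi/(2p^\ast))$. This gives the matching lower bound. I do not anticipate any serious obstacle here: the work has already been done in Propositions~\ref{thm:RCZ} and~\ref{homeker} and in the explicit computation \eqref{tranRiesz} of the norm of the truncated Riesz transform; the corollary is a one-line specialization, with the only bookkeeping being that the Calder\'on--Zygmund constant $\kappa$ for the Riesz kernel depends only on the dimension, so that the additive error in Proposition~\ref{thm:RCZ} collapses to a dimensional constant $C(d)$ as claimed.
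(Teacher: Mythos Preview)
Your proposal is correct and follows exactly the paper's approach: the corollary is stated immediately after \eqref{tranRiesz} with the one-line justification ``With \eqref{tranRiesz} and the bounds in Propositions~\ref{thm:RCZ} and~\ref{homeker} we have,'' and you have simply unpacked those three ingredients. The only minor remark is that the hypothesis $\Omega(x)=\Omega(-x)$ in Proposition~\ref{homeker} as stated is a typo (the proof there, and the application to the Riesz kernel, requires $\Omega$ odd), but you correctly identified the ``antisymmetric rearrangement'' as the relevant mechanism.
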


\begin{conjecture}\label{SharpRiesz} 
For all $d>1$, $1<p<\infty$, $k=1, \dots, d$, 
\begin{equation}\label{DRTConj} 
	\|R^{(k)}_\dis\|_{\ell^p\to \ell^p}= \cot\left(\frac{\pi}{2p^{\ast}}\right).
\end{equation}
\end{conjecture}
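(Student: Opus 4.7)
The plan is to reduce Conjecture~\ref{SharpRiesz} to Theorem~\ref{bestpro-dis-riesz} (the sharp bound for the probabilistic discrete Riesz transforms $T_{\bH^{(k)}}$) by establishing a convolution factorization
\begin{equation*}
    R^{(k)}_{\dis} f = T_{\bH^{(k)}}(\mu^{(k)}\ast f)
\end{equation*}
for some probability measure $\mu^{(k)}$ on $\Z^d$. Since convolution with a probability measure is a contraction on $\ell^p(\Z^d)$, combining the identity with Theorem~\ref{bestpro-dis-riesz} would yield
\begin{equation*}
    \|R^{(k)}_{\dis}\|_{\ell^p\to\ell^p} \leq \|T_{\bH^{(k)}}\|_{\ell^p\to\ell^p} = \cot\!\left(\frac{\pi}{2p^*}\right).
\end{equation*}
This is the natural $d\geq 2$ analogue of the route taken in Section~\ref{FMPDH} for the discrete Hilbert transform, and it is precisely the content of Question~\ref{convZd}.

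To produce such a $\mu^{(k)}$, I would work on the Fourier side over the torus $Q=[-\tfrac12,\tfrac12)^d$. Using the Poisson summation formula~\eqref{FourierInv} one obtains a tractable expression for the symbol $\cF(R^{(k)}_{\dis})(\xi)$, and the integral representation~\eqref{kernelrep} (together with the antisymmetry $\bH^{(k)}+(\bH^{(k)})^T=0$, which collapses the bilinear form to a single off-diagonal term) gives a parallel spectral representation for $\cF(T_{\bH^{(k)}})(\xi)$; the two differ essentially by the relative weight assigned to each lattice frequency $n\in\Z^d$ inside the expansion of $h(x,y)$. One then defines
\begin{equation*}
    m^{(k)}(\xi) := \frac{\cF(R^{(k)}_{\dis})(\xi)}{\cF(T_{\bH^{(k)}})(\xi)},\quad \xi\in Q\setminus\{0\},
\end{equation*}
and must verify that $m^{(k)}$ extends continuously to $Q$, equals $1$ at the origin, and is positive-definite; by the Bochner-type Lemma~\ref{lem:negdef} this would identify $m^{(k)}=\widehat{\mu^{(k)}}$ for a probability measure $\mu^{(k)}$ on $\Z^d$, finishing the proof.

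The main obstacle is verifying the positive-definiteness of $m^{(k)}$. In dimension one the closed form~\eqref{Explicitd=1} for $h$ makes $m^{(1)}$ an explicit elementary function and Bochner's criterion can be checked by inspection, as in Theorem~\ref{BanKwaMain}; for $d\geq 2$ no comparable closed form is available. A promising alternative is to give $\mu^{(k)}$ a direct probabilistic meaning: since the kernel $K_{T_{\bH^{(k)}}}$ arises as the conditional expectation of a martingale transform of the $h$-process started at $(0,w)$ with $w\to\infty$, one may try to realize $R^{(k)}_{\dis}$ by replacing the degenerate starting point $(0,w)$ with the uniform distribution on $Q\times\{w\}$; the passage from ``fixed-point'' to ``averaged'' starting distribution should, in the limit, manifest itself as convolution by the lattice-valued hitting law $\mu^{(k)}$, whose positivity would then be automatic. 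Checking that this averaging indeed reproduces the Calder\'on--Zygmund kernel~\eqref{RieszK} and not an unrelated object appears to be the crux of the problem.

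A weaker fallback, in case the positive-definiteness of $m^{(k)}$ resists proof, is to settle Problem~\ref{WeakerProb} by showing only that $m^{(k)}$ is bounded, which via Plancherel and Proposition~\ref{thm:RCZ} would sharpen the additive constant $C(d)$ in Corollary~\ref{bestweknow} without removing it; this would quantify the gap to the conjecture but not close it.
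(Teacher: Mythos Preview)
The statement you are attempting is presented in the paper as an open \emph{conjecture}, not a theorem; the paper gives no proof of it. Your proposal is not a proof either---it is a research plan with explicitly acknowledged gaps---and in fact it coincides with the strategy the paper itself suggests: the convolution factorization $R^{(k)}_{\dis}=T_{\bH^{(k)}}\ast\mu^{(k)}$ you seek is precisely Question~\ref{convZd}, which the paper poses as the natural higher-dimensional analogue of Theorem~\ref{BanKwaMain} and leaves unresolved.

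The obstacle you name---verifying that the quotient symbol $m^{(k)}$ is positive-definite on the $d$-torus when $d\ge 2$---is the genuine crux, and nothing in the paper settles it. Two smaller points about your sketch. First, the probabilistic speculation about replacing the starting point $(0,w)$ by the uniform distribution on $Q\times\{w\}$ does not obviously produce the Calder\'on--Zygmund kernel: the limiting operator $T_{\bH^{(k)}}$ is already translation-invariant (this is how~\eqref{ExplicitK-H} is obtained), so spatially averaging the starting point has no effect in the limit and cannot by itself manufacture the missing convolution factor. Second, your fallback of showing only that $m^{(k)}$ is \emph{bounded} would, via Plancherel, recover the sharp constant only at $p=2$ (where it is already trivially known from the explicit multiplier); mere $L^\infty$ control of a torus multiplier says nothing about $\ell^p$ for $p\ne 2$, so this would neither sharpen the constant $C(d)$ in Corollary~\ref{bestweknow} nor address Problem~\ref{WeakerProb}. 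For either of those you would need at minimum that $m^{(k)}$ is the Fourier transform of an $\ell^1$ kernel with dimension-free norm, which is already close in strength to the full positive-definiteness you are after.
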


A weaker but also interesting result would be  

\begin{problem}\label{WeakerProb}  Show that 
\begin{equation}
\|R^{(k)}_\dis\|_{\ell^p} \leq C_p, 
\end{equation}
where $C_p$ is independent of $d$.   Even more, it would be interesting to show that the vector of discrete Riesz transforms also has  a bound independent of the dimension, as it was first proved by Stein in \cite{SteSome} for the continuous version. That is, if 
\begin{align}R_{\dis}f(n)=\left(\sum_{k=1}^d |R^{(k)}_\dis f(n)|^2\right)^{1/2}, 
\end{align}
then 
\begin{align}\label{vector} \|R_{\dis}f\|_{\ell^p}\leq C_p\|f\|_{\ell^p},
\end{align}
where $C_p$ is independent of $d$. 
\end{problem} 

From the representation of the kernel $\bK_A(n,m)$ in~\eqref{kernelrep} and Remark~\ref{rmk:kernelsymmetry}, we have $T_{\bH^{(k)}} = 2T_{A^{(k)}}$ where $A^{(k)}_{ij}=1$ for $i=d+1$, $j=k$ and otherwise 0. By~\cite[Theorem 4.2]{BanWang}, we obtain that
\[
    \left\|\left(\sum_{k=1}^d |T_{\bH^{(k)}}f|^{2}\right)^{1/2}\right\|_{\ell^p}\le 2(p^\ast -1)\|f\|_{\ell^p}.
\]

As for the best constant \eqref{vector} for the vector, this is a well known problem even in the continuous case.  The original proof with bound independent of dimension in \cite{SteSome}, and different subsequent proofs shortly after, did not even give the sharp asymptotic behavior which is  $\mathcal{O}(1/p-1)$, as $p\to 1$ and $\mathcal{O}(p)$, as $p\to\infty$.  The first proof providing this behavior was given in \cite{BanGoodLamb} with subsequent improvements with explicit constants in \cite[Corollary 4.21]{BanWang} and \cite[Theorem 5.1]{IwaMar}.

These problem are also motivated by the remark in \cite{MSW}*{pg.\,193} already discussed in connection to inequality \eqref{MSW-Ineq}.

\section{Discrete Riesz transforms and their probabilistic counterparts}
\subsection{Probabilistic Discrete Riesz Transforms and their norms}\label{PDR} 
The proof of the Conjecture \ref{SharpRiesz} for $d=1$ in \cite{BanKwa} rests on the probabilistic construction of the operators in Section \ref{general-d} for $d=1$ applied to the operator $T_{\bH}^{w}$ as in \eqref{projTH-gen-2} with the matrix 
\begin{equation}\label{matrixH} 
    \bH=  \left[\begin{matrix} 0 & -1 \\ 1 & 0 \end{matrix}\right],  
\end{equation} 
which is orthogonal and of norm 1. Motivated by this and the Gundy--Varopoulos \cite{GV79} probabilistic representation of the Riesz transforms $R^{(k)}$ on $\R^d$, $d\geq 1$ as in \eqref{GunVorRiesz1} and its many variants studied over the years (see for example \cite{Ban, BBL20} and the many references therein), we consider the operators $T_{\bH^{(k)}}$, where for  each $k=1,2,\cdots,d$, the $(d+1)\times (d+1)$ matrix  $\bH^{(k)}$ is given by \eqref{HRiesz}. As already noted $\bH^{(k)}$ is orthogonal, $\|\bH^{(k)}\|=1$.  This gives the operators $T_{\bH^{(k)}}$, $k=1,\dots d$, which we call the \textit{``probabilistic discrete Riesz transforms''}. By Theorem \ref{thm:TaIntRep}, their kernels are given by 
\begin{equation}
\bK_{\bH^{(k)}}(n,m)=\int_{\R^{d}}\int_0^{\infty}2y h(x,y)\bH^{(k)}\nabla \left(\frac{p_m(x, y)}{h(x, y)}\right) \cdot  \nabla \left(\frac{p_n(x,y)}{h(x,y)}\right) \,dydx.
\end{equation} 
Using the fact that $h(x+m,y)=h(x,y)$ for all $m\in\Z^d$, a change of variables shows that 
\begin{equation}\label{ExplicitK-H}
    \bK_{\bH^{(k)}}(n,m)=\bK_{\bH^{(k)}}(n-m)
    =\int_{\R^{d}}\int_0^{\infty}2y h(x,y)\bH^{(k)}\nabla \left(\frac{p_0(x, y)}{h(x, y)}\right) \cdot  \nabla \left(\frac{p_{n-m}(x, y)}{h(x, y)}\right) \,dydx.
\end{equation}

Note that when  $d=1$ the matrices in \eqref{HRiesz} reduce to the matrix $\bH$ in \eqref{matrixH} which gives the probabilistic discrete Hilbert transform $T_{\bH}$.  

The kernels $\bK_{\bH^{(k)}}$ enjoy the following  properties that will be used below in several computations. 
For $n=(n_1,n_2,\cdots,n_d)\in\Z^d$, define 
\begin{align*}
	\wt{n}=(n_1,n_2,\cdots,-n_k,\dots n_d).
\end{align*}
It follows from \eqref{ExplicitK-H} (or from~\eqref{eq:rep_jn} below) that $\bK_{\bH^{(k)}}(n)=-\bK_{\bH^{(k)}}(\wt{n})$. Thus we have $\bK_{\bH^{(k)}}(n)=0$, if $n_k=0$, and in particular, $\bK_{\bH^{(k)}}(0)=0$. It also follows that $\bK_{\bH^{(k)}}(n)={-}\bK_{\bH^{(k)}}(-n)$. These properties of $\bK_{\bH^{(k)}}$ will be used below in several computations.   Because $\bK_{\bH^{(k)}}(0)=0$ we often write the sum in the convolution over $\Z$ and not over $\Z\setminus\{0\}$.

By~\eqref{projTH-gen-2} of Theorem~\ref{Proj} and \eqref{kernelrep} of Theorem~\ref{thm:TaIntRep}, we obtain the following
\begin{theorem}\label{ProbRbound} 
Suppose $f\in\ell^p(\Z^d)$, $1<p<\infty$. Set 
\begin{equation}\label{probRiesz} 
    T_{\bH^{(k)}}(f)(n)=\sum_{m\in\Z^d}\bK_{\bH^{(k)}}(n-m)f(m).
\end{equation}
Then,
\begin{align}\label{cot-bound-ProbRiesz}
	\|T_{\bH^{(k)}}f\|_{\ell^p}\leq \cot(\frac{\pi}{2p^{\ast}})\|f\|_{\ell^{p}}, \quad k=1, 2, \dots, d.
\end{align}	
\end{theorem}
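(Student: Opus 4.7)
The plan is to apply Theorem~\ref{Proj}(ii) with $A(x,y) \equiv \bH^{(k)}$, after checking the two hypotheses: that $\bH^{(k)}$ is orthogonal in the sense defined in Section~\ref{general-d} (that is, $\langle \bH^{(k)} v, v\rangle = 0$ for every $v \in \R^{d+1}$) and that $\|\bH^{(k)}\| = 1$. Both follow directly from the explicit entries in \eqref{eq:Hdef}: the matrix is skew-symmetric, so $\langle \bH^{(k)} v, v\rangle = \langle v, (\bH^{(k)})^T v\rangle = -\langle \bH^{(k)} v, v\rangle$, forcing the bilinear form to vanish; and a quick computation gives $\bH^{(k)} v = -v_{d+1} e_k + v_k e_{d+1}$, whence $\|\bH^{(k)} v\|^2 = v_k^2 + v_{d+1}^2 \le \|v\|^2$ with equality when $v = e_k$.

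Next, for any compactly supported $f \in \ell^p_c(\Z^d)$, Theorem~\ref{Proj}(ii) immediately yields
\[
\|T_{\bH^{(k)}} f\|_{\ell^p} \;\le\; \cot\!\Paren{\tfrac{\pi}{2p^\ast}} \, \|\bH^{(k)}\| \, \|f\|_{\ell^p} \;=\; \cot\!\Paren{\tfrac{\pi}{2p^\ast}} \|f\|_{\ell^p}.
\]
This is exactly \eqref{cot-bound-ProbRiesz} on the dense subspace $\ell^p_c(\Z^d)$, and one then extends to all of $\ell^p(\Z^d)$ by a standard density argument, using that the kernel representation \eqref{probRiesz} of $T_{\bH^{(k)}}$ is consistent with this extension.

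Finally, to justify writing $T_{\bH^{(k)}}$ as the discrete convolution in \eqref{probRiesz}, I would invoke the kernel formula from Theorem~\ref{thm:TaIntRep}: $K_{\bH^{(k)}}(n,m)$ is given by the integral \eqref{kernelrep}. The periodicity $h(x+\ell, y) = h(x,y)$ for $\ell \in \Z^d$, together with $p_{n+\ell}(x+\ell,y) = p_n(x,y)$, allows the substitution $x \mapsto x + m$ in \eqref{kernelrep} and shows that $K_{\bH^{(k)}}(n,m) = K_{\bH^{(k)}}(n-m,0)$, which is precisely the translation-invariance recorded in \eqref{ExplicitK-H}.

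There is no genuine obstacle here: the theorem is designed to harvest the machinery already developed. The only point that requires a moment of thought is checking the skew-symmetry of $\bH^{(k)}$, which is essential because Theorem~\ref{Proj}'s sharp bound with the constant $\cot(\pi/(2p^\ast))$ relies on Bañuelos--Wang's orthogonal martingale inequality rather than on Burkholder's general transform inequality (which would only give the weaker constant $p^\ast - 1$).
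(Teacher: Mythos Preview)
Your proof is correct and follows essentially the same approach as the paper: the paper simply records (just before the theorem) that $\bH^{(k)}$ is orthogonal with $\|\bH^{(k)}\|=1$, and then states that the theorem follows from~\eqref{projTH-gen-2} of Theorem~\ref{Proj} together with the kernel formula~\eqref{kernelrep} of Theorem~\ref{thm:TaIntRep}. You have spelled out the skew-symmetry and norm computations and the translation-invariance argument in slightly more detail than the paper, but the route is identical.
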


In the following proposition, we derive a different integral representation for the kernel $\bK_{\bH^{(k)}}(n)$ which will provide a relationship between the operators  $T_{\bH^{(k)}}$ and the CZ discrete Riesz transforms $R^{(k)}_{\dis}$. As we shall see, this representation allows us to prove that the $\ell^p$-bound in~\eqref{cot-bound-ProbRiesz} is best possible; see Theorem~\ref{bestpro-dis-riesz}.

\begin{theorem}\label{DisProbVSDiscRies}
We have
\begin{align}\label{eq:rep_jn}
    \bK_{\bH^{(k)}}(n)
    &=\left(\int_{\bR^d}\int_0^{\infty} \frac{U_n(x,y)}{h(x,y)}\, dydx\right) \ind_{\{|n|\geq 1\}}(n)\\
    &=\left(4\int_{\bR^d}\int_0^{\infty} \frac{S_n(x,y)}{h(x,y)}\, dydx-3\int_{\R^d}\int_0^{\infty} \frac{T_n(x,y)}{h(x,y)}\, dydx\right)\ind_{\{|n|\geq 1\}}(n)\nonumber\\
	&=-4\int_{\R^{d}}\int_{0}^{\infty}\frac{1}{h}\parder{{p_0}}{x_k}\parder{}{y}(yp_n)\,dydx
\end{align}
where 
\begin{align*}
	S_n(x,y)&=\frac{2c_d^2(d+1)x_k y^2}{(|x|^2+y^2)^{\frac{d+3}{2}}(|x-n|^2+y^2)^{\frac{d+1}{2}}},\\
	T_n(x,y)&=\frac{4c_d^2(d+1)^2 x_k y^4}{3(|x|^2+y^2)^{\frac{d+3}{2}}(|x-n|^2+y^2)^{\frac{d+3}{2}}},
\end{align*}
and $U_n(x,y)=4S_n(x,y)-3T_n(x,y)$.
\end{theorem}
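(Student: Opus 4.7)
Assume $n \neq 0$ (the case $n=0$ is covered by Remark~\ref{Properties-KH}, which gives $K_{\bH^{(k)}}(0)=0$). The plan is to strip off the factor $h$ in the numerator of $2yh\,\bH^{(k)}\nabla(p_0/h)\cdot\nabla(p_n/h)$ by two integration-by-parts moves, then use a reflection symmetry of $h$ to identify the result with $U_n/h$. Writing $\langle a,b\rangle_H := \bH^{(k)}\nabla a\cdot\nabla b = \partial_k a\,\partial_y b - \partial_y a\,\partial_k b$, this form is antisymmetric in $a,b$ and kills mixed second partials inside itself. Setting $u := p_0/h$, $v := p_n/h$, I integrate the $\partial_k u\,\partial_y v$ piece by parts in $y$, the $\partial_y u\,\partial_k v$ piece by parts in $x_k$, and symmetrize in $u\leftrightarrow v$. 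Because $h(u\partial_k v - v\partial_k u) = (p_0\partial_k p_n - p_n\partial_k p_0)/h$ and $v\nabla u - u\nabla v = (p_n\nabla p_0 - p_0\nabla p_n)/h^2$ (the $\partial_k h$ and $\nabla h$ contributions cancel), this yields, up to boundary terms,
\[
K_{\bH^{(k)}}(n) \;=\; \iint \frac{p_0\partial_k p_n - p_n\partial_k p_0}{h}\,dxdy \;+\; \iint \frac{y\,\bH^{(k)}\nabla h\cdot V}{h^2}\,dxdy, \qquad V := p_n\nabla p_0 - p_0\nabla p_n.
\]

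To eliminate the remaining $h^{-2}$, I use $\bH^{(k)}\nabla h/h^2 = -\bH^{(k)}\nabla(1/h)$ together with the key identity $\dive(\bH^{(k)}V) = 2\langle p_0, p_n\rangle_H$. The latter holds because $\dive(\bH^{(k)}W) \equiv 0$ for any smooth $W$, so this divergence reduces to a quadratic expression in first derivatives of $p_0, p_n$, and the harmonicity $\Delta p_0 = \Delta p_n = 0$ collapses all terms except the antisymmetric pairing. Combining with $\nabla(1/h)\cdot \bH^{(k)}V = \dive((1/h)\bH^{(k)}V) - (1/h)\dive(\bH^{(k)}V)$ and a second integration by parts gives, still up to boundary terms,
\[
K_{\bH^{(k)}}(n) \;=\; 2\iint \frac{p_0\partial_k p_n - p_n\partial_k p_0}{h}\,dxdy \;-\; 2\iint\frac{y\,\bH^{(k)}\nabla p_0\cdot\nabla p_n}{h}\,dxdy.
\]

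To match this with $\iint U_n/h\,dxdy$, I plug in the explicit formulas $\partial_j p_m = -(d+1)(x_j-m_j)p_m/(|x-m|^2+y^2)$ and the analogous closed form for $\bH^{(k)}\nabla p_0\cdot\nabla p_n$. Both integrands in the previous display, and $U_n/h$ as well, then take the form $\tfrac{2(d+1)p_0 p_n}{h(|x|^2+y^2)(|x-n|^2+y^2)}$ times a polynomial in $x,y,n$. A direct calculation shows that the difference $R(x,y,n)$ of the polynomial remainder from the previous display and the one coming from $U_n/h$ satisfies $R(n-x,y,n) = -R(x,y,n)$; indeed, $R$ factors as $(2x_k-n_k)\,[\,\cdot\,]$ with the bracketed factor vanishing identically after using $|x|^2 - |x-n|^2 = 2x\cdot n - |n|^2$. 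The common weight is invariant under the reflection $x\mapsto n-x$, since $h(n-x,y) = h(-x,y) = h(x,y)$ by periodicity and evenness of $h$ and $p_0 p_n$ merely swaps its two factors; hence the antisymmetric remainder integrates to zero, yielding the first claimed equality. The second equality $U_n = 4S_n - 3T_n$, with $S_n = -2(\partial_k p_0)p_n$, is a routine factorization.

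The main obstacle is rigorously checking that every boundary contribution at $y\to 0$, $y\to\infty$, and $|x|\to\infty$ really vanishes. At $y\to\infty$ the Poisson kernels and their gradients decay polynomially while $h\to 1$; at $|x|\to\infty$ the decay of $p_0$ and $p_n$ suffices; at $y\to 0$ the explicit factor $y$ in the boundary expressions, combined with the distributional convergence $p_0(\cdot,y) \to \delta_0$ (and analogously for $p_n$), forces the limits to zero, with the blow-up of $1/h$ at lattice points tamed by the complementary vanishing of the other factors, so the argument can be carried out via dominated convergence on cutoffs $\varepsilon < y < R$.
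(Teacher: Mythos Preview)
Your approach is correct in outline and reaches the same endpoint, but it is considerably more elaborate than the paper's argument. The paper obtains the formula in essentially one move: starting from~\eqref{ExplicitK-H}, an integration by parts (together with the cancellation of mixed partials) collapses the whole expression directly to
\[
K_{\bH^{(k)}}(n)=-4\int_{\R^d}\int_0^\infty \frac{1}{h}\,\partial_{x_k}p_0\,\partial_y(yp_n)\,dy\,dx,
\]
after which one simply inserts the explicit derivatives~\eqref{Poisshor},~\eqref{Poissver} and splits $|x-n|^2-dy^2=(|x-n|^2+y^2)-(d+1)y^2$ to read off $4S_n-3T_n$. There is no need for a second round of integration by parts, no $\operatorname{div}(\bH^{(k)}V)$ identity, and no reflection-symmetry argument. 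Your route instead lands at $2\iint(p_0\partial_k p_n-p_n\partial_k p_0)/h-2\iint y\,\bH^{(k)}\nabla p_0\cdot\nabla p_n/h$, which differs from $-4\iint\frac{1}{h}\partial_k p_0\,\partial_y(yp_n)$ by an integrand that is odd under $x\mapsto n-x$; your reflection step then closes the gap. That symmetry is a nice structural observation, but it is extra work here.

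Two local corrections. First, the claim ``$\operatorname{div}(\bH^{(k)}W)\equiv 0$ for any smooth $W$'' is false: for a vector field $W$ one has $\operatorname{div}(\bH^{(k)}W)=\partial_y W_k-\partial_k W_{d+1}$. Your identity $\operatorname{div}(\bH^{(k)}V)=2\langle p_0,p_n\rangle_H$ is nonetheless correct, but for the elementary reason that the mixed second partials $p_0\partial_k\partial_y p_n$ and $p_n\partial_k\partial_y p_0$ cancel in pairs; harmonicity plays no role. Second, your description of the $y\to 0$ boundary behavior is inverted: $1/h$ blows up \emph{away} from lattice points (since $h(x,y)\to 0$ for $x\notin\Z^d$) and tends to $0$ at lattice points. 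The boundary terms still vanish for $n\neq 0$ because every integrand carries a factor $p_0p_n/h$ (or similar), whose integrability over $\R^d\times\R_+$ was already established in the proof of Theorem~\ref{thm:TaIntRep}; the relevant dominations there justify the integrations by parts.
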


\begin{proof} 
{
By~\eqref{ExplicitK-H}, we have
\begin{align*}
    &\bK_{\bH^{(k)}}(n)\\
    &=\int_{\R^{d}}\int_0^{\infty}2y h\bH^{(k)}\nabla \left(\frac{p_0}{h}\right) \cdot  \nabla \left(\frac{p_{n}}{h}\right) \,dydx\\
    &=\int_{\R^{d}}\int_0^{\infty}2y h\left(
        \parder{}{x_k} \left(\frac{p_0}{h}\right)   \parder{}{y} \left(\frac{p_{n}}{h}\right) 
        -\parder{}{y} \left(\frac{p_0}{h}\right)   \parder{}{x_k} \left(\frac{p_{n}}{h}\right) 
        \right)
        \,dydx\\
    &=
    \int_{\R^{d}}\int_0^{\infty}2y h\left(
        \frac{1}{h^2}\parder{p_0}{x_k}\parder{p_n}{y}
        -\frac{p_0}{h^3}\parder{h}{x_k}\parder{p_n}{y}
        -\frac{p_n}{h^3}\parder{p_0}{x_k}\parder{h}{y}
        +\frac{p_0p_n}{h^4}\parder{h}{x_k}\parder{h}{y}
        \right)
        \,dydx\\
    &\qquad - \int_{\R^{d}}\int_0^{\infty}2y h\left(
        \frac{1}{h^2}\parder{p_0}{y}\parder{p_n}{x_k}
        -\frac{p_0}{h^3}\parder{h}{y}\parder{p_n}{x_k}
        -\frac{p_n}{h^3}\parder{p_0}{y}\parder{h}{x_k}
        +\frac{p_0p_n}{h^4}\parder{h}{y}\parder{h}{x_k}
        \right)
        \,dydx\\
    &=
    \int_{\R^{d}}\int_0^{\infty}2y h\left(
        \frac{1}{h^2}\parder{p_0}{x_k}\parder{p_n}{y}
        -\frac{p_0}{h^3}\parder{h}{x_k}\parder{p_n}{y}
        +\frac{p_0}{h^3}\parder{h}{y}\parder{p_n}{x_k}
        \right)
        \,dydx\\
    &\qquad - \int_{\R^{d}}\int_0^{\infty}2y h\left(
        \frac{1}{h^2}\parder{p_0}{y}\parder{p_n}{x_k}
        -\frac{p_n}{h^3}\parder{p_0}{y}\parder{h}{x_k}
        +\frac{p_n}{h^3}\parder{p_0}{x_k}\parder{h}{y}
        \right)
        \,dydx.
\end{align*}
By the change of variable $x=n-x'$, we simplify
\begin{align*}
    \bK_{\bH^{(k)}}(n)
	=4\int_{\R^{d}}\int_{0}^{\infty}\left(
        \frac{y}{h}\parder{{p_0} }{x_k}\parder{p_n}{y} 
        +\frac{y{p_0} }{h^2}\parder{h}{y}\parder{p_n}{x_k} 
        -\frac{y{p_0} }{h^2}\parder{h}{x_k}\parder{p_n}{y} 
    \right)\,dydx.
\end{align*}
It follows from integration by parts that
\begin{align*}
    \int_0^\infty 
        \frac{y{p_0} }{h^2}\parder{h}{y}\parder{p_n}{x_k} 
    \, dy
    &= -\int_0^\infty 
    h\parder{}{y}\left(
        \frac{y{p_0} }{h^2}\parder{p_n}{x_k} 
        \right)
    \, dy\\
    &= -\int_0^\infty 
    \left(
        \frac{{p_0} }{h}\parder{p_n}{x_k} 
        +\frac{y }{h}\parder{p_0}{y}\parder{p_n}{x_k} 
        -\frac{2y{p_0} }{h^2}\parder{h}{y}\parder{p_n}{x_k} 
        +\frac{y{p_0} }{h}\frac{\partial^2 p_n}{\partial y \partial x_k} 
        \right)
    \, dy\\
    &= \int_0^\infty 
    \left(
        \frac{{p_0} }{h}\parder{p_n}{x_k} 
        +\frac{y }{h}\parder{p_0}{y}\parder{p_n}{x_k} 
        +\frac{y{p_0} }{h}\frac{\partial^2 p_n}{\partial y \partial x_k} 
        \right)
    \, dy.
\end{align*}
Similarly, 
\begin{align*}
    -\int_{\R}
        \frac{y{p_0} }{h^2}\parder{h}{x_k}\parder{p_n}{y} 
    \, dx_k
    &= \int_{\R}
    h\parder{}{x_k}\left(
        \frac{y{p_0} }{h^2}\parder{p_n}{y} 
        \right)
    \, dx_k\\
    &= \int_{\R}
    \left(
        \frac{y }{h}\parder{p_0}{x_k}\parder{p_n}{y} 
        -\frac{2y{p_0} }{h^2}\parder{h}{x_k}\parder{p_n}{y} 
        +\frac{y{p_0} }{h}\frac{\partial^2 p_n}{\partial y \partial x_k} 
        \right)
    \, dx_k\\
    &=- \int_{\R}
    \left(
        \frac{y }{h}\parder{p_0}{x_k}\parder{p_n}{y} 
        +\frac{y{p_0} }{h}\frac{\partial^2 p_n}{\partial y \partial x_k} 
        \right)
    \, dx_k.
\end{align*}
Combining these, we get 
\begin{align*}
    \bK_{\bH^{(k)}}(n)
    &=4\int_{\R^{d}}\int_{0}^{\infty}\left(
        \frac{y}{h}\parder{{p_0} }{x_k}\parder{p_n}{y} 
        +\frac{y{p_0} }{h^2}\parder{h}{y}\parder{p_n}{x_k} 
        -\frac{y{p_0} }{h^2}\parder{h}{x_k}\parder{p_n}{y} 
    \right)\,dydx\\
    &=4\int_{\R^{d}}\int_{0}^{\infty}\bigg(
        \frac{y}{h}\parder{{p_0} }{x_k}\parder{p_n}{y} 
        +\frac{{p_0} }{h}\parder{p_n}{x_k} 
        +\frac{y }{h}\parder{p_0}{y}\parder{p_n}{x_k} 
        +\frac{y{p_0} }{h}\frac{\partial^2 p_n}{\partial y \partial x_k} \\
    &\qquad\qquad -\frac{y }{h}\parder{p_0}{x_k}\parder{p_n}{y} 
        -\frac{y{p_0} }{h}\frac{\partial^2 p_n}{\partial y \partial x_k} 
    \bigg)\,dydx\\
    &=4\int_{\R^{d}}\int_{0}^{\infty}\bigg(
        \frac{{p_0} }{h}\parder{p_n}{x_k} 
        +\frac{y }{h}\parder{p_0}{y}\parder{p_n}{x_k} 
    \bigg)\,dydx\\
    &
    =
    4\int_{\R^{d}}\int_{0}^{\infty} 
    \frac{1}{h}\parder{p_n}{x_k} \parder{}{y}\left(yp_0\right)
    \,dydx
    =
    -4\int_{\R^{d}}\int_{0}^{\infty} 
    \frac{1}{h}\parder{p_0}{x_k} \parder{}{y}\left(yp_n\right)
    \,dydx
\end{align*}
We note that on the numerator of~\eqref{Poissver} we have $(|x-n|^2-dy^2)$. Splitting this into $|x-n|^2+y^2$ and $-(d+1)y^2$, then the first term can be absorbed in $S_n(x,y)$ and the other in $T_n(x,y)$. Thus, integration by parts with~\eqref{Poisshor} and~\eqref{Poissver} gives that 

\begin{align}\label{byparts} 
    \bK_{\bH^{(k)}}(n)
	& =8c_d^2(d+1)\int_{\R^d}\int_0^\infty \frac{1}{h(x,y)}\frac{x_k y^2}{(|x|^2+y^2)^{\frac{d+3}{2}}(|x-n|^2+y^2)^{\frac{d+1}{2}}} \, dydx\\
	&\qquad\qquad -4c_d^2 (d+1)^2 \int_{\R^d}\int_0^\infty \frac{1}{h(x,y)}\frac{x_k y^4}{(|x|^2+y^2)^{\frac{d+3}{2}}(|x-n|^2+y^2)^{\frac{d+3}{2}}} \, dydx. \nonumber
\end{align}
}
\end{proof}

\begin{remark} 
When $d=1$, \eqref{Explicitd=1} gives  
$h(x,y) =\frac{\sinh(2\pi y)}{\cosh(2\pi y)-\cos(2\pi x)},$
and 
 \begin{align*} 
	S_n(x,y)
	&=\frac{4}{\pi^2}\frac{xy^2}{(x^2+y^2)^2 ((x-n)^2+y^2)},\\
	T_n(x,y)
	&=\frac{16}{3\pi^2}\frac{x y^4}{(x^2+y^2)^2 ((x-n)^2+y^2)^2},
\end{align*}
From this we have that for each $n\in\Z\setminus\{0\}$ \begin{align*}
    \frac{\pi^2}{16} K_{\bH}(n)
	&= \frac{\pi^2}{4}\iint\frac{S_n(x,y)}{h(x,y)}\,dxdy-\frac{3\pi^2}{16}\iint\frac{T_n(x,y)}{h(x,y)}\,dxdy \\
	&=\int_0^\infty	\coth(2\pi y)y^2 \left(\int_{\R}\frac{x}{(x^2+y^2)^2 ((x-n)^2+y^2)}\,dx\right) dy\\
	&\qquad -\int_0^\infty	\frac{y^2}{\sinh(2\pi y)} \left(\int_{\R}\frac{x\cos(2\pi x)}{(x^2+y^2)^2 ((x-n)^2+y^2)}\,dx\right) dy\\
	&\qquad -\int_0^\infty	\coth(2\pi y)y^4 \left(\int_{\R}\frac{x}{(x^2+y^2)^2 ((x-n)^2+y^2)^2}\,dx\right) dy\\
	&\qquad +\int_0^\infty	\frac{y^4}{\sinh(2\pi y)} \left(\int_{\R}\frac{x\cos(2\pi x)}{(x^2+y^2)^2 ((x-n)^2+y^2)^2}\,dx\right) dy.
\end{align*}
The inner  integrals with respect to $x$ can all be computed using  partial fraction decompositions.
Indeed, for $n\in\Z\setminus\{0\}$ we have:  

\begin{align*}
    \int_{\R}\frac{x}{(x^2+y^2)^2 ((x-n)^2+y^2)}\,dx
    =\frac{2\pi n }{y(n^2+4y^2)^2}, 
\end{align*}

\begin{align*}
    \int_{\R}\frac{x\cos(2\pi x)}{(x^2+y^2)^2 ((x-n)^2+y^2)}\,dx
    =\frac{2\pi n  e^{-2\pi y}}{y(n^2+4y^2)^2}-\frac{2\pi^2 e^{-2\pi y}}{n(n^2+4y^2)}, 
\end{align*}

\begin{align*}
	\int_{\R}\frac{x}{(x^2+y^2)^2 ((x-n)^2+y^2)^2}\,dx
	=\frac{\pi n (n^2+20y^2)}{2y^3(n^2+4y^2)^3}, 
\end{align*}

\begin{align*}
	\int_{\R}\frac{x\cos(2\pi x)}{(x^2+y^2)^2 ((x-n)^2+y^2)^2}\,dx
	=\frac{e^{-2\pi y}\pi n(n^2+20y^2)}{2y^3(n^2+4y^2)^3} + \frac{e^{-2\pi y}\pi^2 (n^2-4y^2)}{ny^2(n^2+4y^2)^2}. 
\end{align*}

From these we obtain 	
\begin{align*}
  K_{\bH}(n)
    &= \frac{1}{\pi n}\left( 1+ 16\pi \int_0^\infty	\frac{y^2 e^{-2\pi y}(3n^2+4y^2)}{\sinh(2\pi y)(n^2+4y^2)^2}\,dy \right)\nonumber\\
    &=\frac{1}{\pi n}\left( 1+ \int_0^\infty \frac{2 y^3}{(y^2 + \pi^2 n^2) \sinh^2 y} \, dy \right),
\end{align*}
where the last equality follows from integrations by parts with 
\begin{align*}
	f(y)
    =\frac{e^{-y}}{\sinh(y)},\qquad g(y)=\frac{2y^3}{y^2+\pi^2 n^2}. 
\end{align*}
This leads to the expression 
\begin{align}\label{kerneld} 
    \bK_{\bH}(n)&= \frac{1}{\pi n}\left(1+\int_0^\infty\frac{2y^3}{(y^2+\pi^2 n^2)\sinh^2(y)}\, dy\right)\,\ind_{\Z \setminus \{0\}}(n)\\
    &=\frac{1}{\pi n}\left(1+\int_0^\infty\frac{2y^3}{(y^2+\pi^2 n^2)\sinh^2(y)}\, dy\right)\,\ind_{\{|n|\geq 1\}}(n).\nonumber
\end{align}
This formula was computed in \cite{BanKwa} with a slightly different approach. We will return to this formula below in Section \ref{ProbContRiesz}.  
\end{remark} 

Recall that $R^{(k)}_{\dis}$ 
are the CZ discrete Riesz transform given by
\begin{align}\label{ClaRiesz} 
    R^{(k)}_{\dis}f(m) =c_d\sum_{\substack{n\in\Z^d\setminus\{0\}}} \frac{n_{k}}{|n|^{d+1}}f(m-n) =\sum_{n\in Z^d} 
    K_{R^{(k)}_{\dis}}(n)
    f(m-n),
\end{align} 
where 
\begin{align*}
    K_{R^{(k)}_{\dis}}(n)
    =c_d\frac{{n_k}}{|n|^{d+1}} \ind_{\Z^d\setminus \{0\}}(n),\qquad c_d=\frac{\Gamma(\frac{d+1}{2})}{\pi^{\frac{d+1}{2}}}.
\end{align*}

The following alternative representation for  $K_{R^{(k)}_{\dis}}$  will be frequently used below. 
\begin{proposition}\label{DisRieszKern1}
We have
\begin{align*}
    K_{R^{(k)}_{\dis}}(n)
    =\left(\int_{\R^d}\int_0^{\infty} U_n(x,y)\, dydx\right)\ind_{\{|n|\geq 1\}}(n)&=\left(\int_{\R^d}\int_0^{\infty} S_n(x,y)\, dydx\right)\ind_{\{|n|\geq 1\}}(n)\\
   & =\left(\int_{\R^d}\int_0^{\infty}T_n(x,y)\, dydx\right)\ind_{\{|n|\geq 1\}}(n).
\end{align*}
\end{proposition}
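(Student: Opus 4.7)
The plan is to reduce all three integrals to a single elementary one--dimensional integral by exploiting the semigroup property of the Poisson kernel. The starting point is a set of algebraic identities: using the explicit formulas~\eqref{Poisshor} and~\eqref{Poissver}, or equivalently the relations $c_d x_k/(|x|^2+y^2)^{\frac{d+3}{2}} = -\partial_k p_0/((d+1)y)$ and $c_d y^2/(|x-n|^2+y^2)^{\frac{d+3}{2}} = (p_n/y - \partial_y p_n)/(d+1)$, one verifies by direct substitution that
\begin{align*}
    S_n = -2\,(\partial_k p_0)\, p_n, \qquad T_n = -\tfrac{4}{3}(\partial_k p_0)\, p_n + \tfrac{4}{3}\, y\, (\partial_k p_0)(\partial_y p_n),
\end{align*}
so that $U_n = 4S_n - 3T_n = -4(\partial_k p_0)(p_n + y\,\partial_y p_n)$.

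The next step is to evaluate the inner integrals in $x$ using the Poisson semigroup identity $\int_{\R^d} p(x-a, y_1)\, p(x-b, y_2)\, dx = p(a-b, y_1+y_2)$. Differentiating once in $a_k$ (and invoking that $\partial_k p$ is odd in its first argument) and then in $y_2$ yields
\begin{align*}
    \int_{\R^d} (\partial_k p_0)(x, y)\, p_n(x, y)\, dx &= (\partial_k p)(n, 2y),\\
    \int_{\R^d} (\partial_k p_0)(x, y)\, (\partial_y p_n)(x, y)\, dx &= (\partial_y \partial_k p)(n, 2y).
\end{align*}
After Fubini and the change of variable $u = 2y$, the integral $I_S(n) := \iint S_n$ collapses to $-\int_0^\infty (\partial_k p)(n, u)\, du$, while $I_T(n) := \iint T_n$ is the same expression up to a constant plus a term $\tfrac{1}{3}\int_0^\infty u\,(\partial_k p)'(n, u)\, du$ (the prime denoting differentiation in the second argument). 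A single integration by parts in $u$---whose boundary terms vanish since $u\,(\partial_k p)(n, u)$ is $O(u^{-d})$ at infinity and $O(u^2)$ near zero---converts this into a multiple of the same scalar integral. A direct evaluation using $(\partial_k p)(n, u) = -(d+1)c_d n_k u/(|n|^2+u^2)^{\frac{d+3}{2}}$ and the substitution $v = u^2$ gives $\int_0^\infty (\partial_k p)(n, u)\,du = -c_d n_k/|n|^{d+1}$, whence $I_S(n) = I_T(n) = c_d n_k/|n|^{d+1} = K_{R^{(k)}_\dis}(n)$ whenever $n \neq 0$; the identity for $I_U(n)$ is then immediate from $U_n = 4S_n - 3T_n$.

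For $n = 0$ each of $S_0, T_0, U_0$ factors as $x_k$ times a function that is even in $x_k$, so the inner integration in $x_k$ makes the full integral vanish, matching $\ind_{\{|n| \geq 1\}}(0) = 0$. Fubini's theorem is justified throughout by the fact that the integrands decay like $|(x, y)|^{-(2d+2)}$ at infinity and are locally integrable near the origin. There is no serious obstacle; the content is essentially conceptual, comparing with the representation~\eqref{eq:rep_jn} of $K_{\bH^{(k)}}$: removing the factor $1/h(x, y)$ from the integrand collapses the three a priori different expressions to the single Calder\'on--Zygmund Riesz kernel.
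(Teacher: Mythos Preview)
Your proof is correct and takes a genuinely different route from the paper's. The paper proceeds by brute force: it writes each power $(|x|^2+y^2)^{-N}$ and $(|x-n|^2+y^2)^{-N}$ as a Gamma integral $\frac{1}{\Gamma(N)}\int_0^\infty u^{N-1}e^{-uA}\,du$, completes the square in the resulting Gaussian, and evaluates the $x$- and $y$-integrals explicitly, finishing with a Beta-function identity. Your argument is more structural: you recognize $S_n$ and $T_n$ as products of Poisson-kernel derivatives, so the $x$-integral is handled in one line by the semigroup identity $p(\cdot,y_1)*p(\cdot,y_2)=p(\cdot,y_1+y_2)$, and the remaining one-dimensional $y$-integral is elementary. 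This buys you a shorter and more transparent computation that also explains \emph{why} the three integrals coincide---they all reduce to $-\int_0^\infty(\partial_k p)(n,u)\,du$---whereas the paper's approach yields the three equalities as separate numerical coincidences. One small remark: your discussion of the case $n=0$ is unnecessary, since the indicator $\ind_{\{|n|\ge 1\}}$ makes the statement vacuous there (and indeed the integrals for $S_0,T_0$ are not absolutely convergent near the origin, so the oddness argument would require some care); but this does not affect the validity of your proof for $n\neq 0$, which is all that is needed.
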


\begin{proof}
Let $N=\frac{d+3}{2}$. By the definition of Gamma function, 
\begin{align*}
	&\int_{0}^{\infty}\int_{\R^{d}}\frac{x_{k}y^{2}}{(|x|^{2}+y^{2})^{N}(|x-n|^{2}+y^{2})^{N-1}}\,dxdy\\
	&= \frac{1}{\Gamma(N)\Gamma(N-1)}\int_{0}^{\infty}\int_{\R^{d}}\int_{0}^{\infty}\int_{0}^{\infty}
		x_{k}y^{2}u^{N-1}v^{N-2}e^{-u(|x|^{2}+y^{2})-v(|x-n|^{2}+y^{2})}\,dudvdxdy\\
	&= \frac{1}{\Gamma(N)\Gamma(N-1)}\int_{0}^{\infty}\int_{\R^{d}}\int_{0}^{\infty}\int_{0}^{\infty}
		x_{k}y^{2}u^{N-1}v^{N-2}e^{-(u+v)|x-\frac{v}{u+v}n|^{2}-\frac{uv}{u+v}|n|^2 -(u+v)y^{2}}\,dudvdxdy.
\end{align*}
Since we have
\begin{align*}
	\int_{\R^d}x_ke^{-(u+v)|x-\frac{v}{u+v}n|^{2}}\, dx
	&= \int_{\R^d}\left(x_k+\frac{v}{u+v}n_k\right)e^{-(u+v)|x|^{2}}\, dx\\
	&= \frac{v}{u+v}n_k\int_{\R^d}e^{-(u+v)|x|^{2}}\, dx\\
	&= \frac{\pi^{\frac{d}{2}}v}{(u+v)^{1+\frac{d}{2}}}n_k
\end{align*}
and
\begin{equation*}
	\int_0^\infty y^2 e^{-(u+v)y^2}\, dy
	= (u+v)^{-\frac{3}{2}}\frac{\sqrt{\pi}}{4},
\end{equation*}
it follows from Fubini's theorem that
\begin{align*}
	&\int_{0}^{\infty}\int_{\R^{d}}\frac{x_{k}y^{2}}{(|x|^{2}+y^{2})^{N}(|x-n|^{2}+y^{2})^{N-1}}\,dxdy\\
	&= \frac{\pi^{\frac{d+1}{2}}n_k}{4\Gamma(N)\Gamma(N-1)}\int_{0}^{\infty}\int_{0}^{\infty} \frac{u^{N-1}v^{N-1}}{(u+v)^{N+1}}e^{-\frac{uv}{u+v}|n|^2 }\,dudv.
\end{align*}
Substituting $t = \frac{u}{u + v}$, $s = (u + v) |n|^2$, $du dv = |n|^{-4} s ds dt$, we obtain
\begin{align*}
	&{\int_{0}^{\infty}\int_{\R^{d}}\frac{x_{k}y^{2}}{(|x|^{2}+y^{2})^{N}(|x-n|^{2}+y^{2})^{N-1}}\,dxdy}\\
	&= \frac{\pi^{\frac{d+1}{2}}}{4\Gamma(N)\Gamma(N-1)} \frac{n_k}{|n|^{d+1}} \int_{0}^{1}\int_{0}^{\infty}s^{N-2}t^{N-1}(1-t)^{N-1}e^{-st(1-t)}\,dsdt\\
	&= \frac{1}{2(d+1)c_d} \frac{n_k}{|n|^{d+1}}.
\end{align*}
Similarly, we have
\begin{align*}
	&\int_{0}^{\infty}\int_{\R^{d}} \frac{x_k y^4}{(|x|^2+y^2)^N(|x-n|^2+y^2)^N} \,dxdy\\
	&= \frac{1}{\Gamma(N)^{2}}\int_{0}^{\infty}\int_{\R^{d}}\int_{0}^{\infty}\int_{0}^{\infty}x_k y^4u^{N-1}v^{N-1}e^{-(u+v)|x-\frac{v}{u+v}n|^{2}-\frac{uv}{u+v}|n|^2 -(u+v)y^{2}}\,dudvdxdy\\
	&= \frac{3\pi^{\frac{d+1}{2}}}{8\Gamma(N)^{2}}\frac{n_k}{|n|^{d+1}} \int_{0}^{\infty}\int_{0}^{\infty}\frac{u^{N-1}v^{N}}{(u+v)^{N+2}}e^{-\frac{uv}{u+v} }\,dudv\\
	&=\frac{3}{4c_d(d+1)^2}\frac{n_k}{|n|^{d+1}}.
\end{align*}
\end{proof}

We are now ready to prove that the upper bound in Theorem~\ref{ProbRbound} is sharp. 

\begin{theorem}\label{bestpro-dis-riesz} 
The $\ell^p$-bound of $T_{\bH^{(k)}}$ in Theorem~\ref{ProbRbound} is best possible. That is, for all $d\geq 1$,  $k=1, \dots d$, $\|T_{\bH^{(k)}}\|_{\ell^p\to \ell^p} = \cot(\frac{\pi}{2p^{\ast}})$.  
\end{theorem}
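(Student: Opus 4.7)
The upper bound $\|T_{\bH^{(k)}}\|_{\ell^p\to\ell^p} \leq \cot(\pi/(2p^\ast))$ is already contained in Theorem~\ref{ProbRbound}, and Pichorides's theorem provides $\|R^{(k)}\|_{L^p\to L^p} = \cot(\pi/(2p^\ast))$. The task therefore reduces to establishing the lower bound
$$\|T_{\bH^{(k)}}\|_{\ell^p(\Z^d)\to\ell^p(\Z^d)} \;\geq\; \|R^{(k)}\|_{L^p(\R^d)\to L^p(\R^d)}.$$
My plan is to adapt the continuous-to-discrete transference argument from Proposition~\ref{homeker}, exploiting the fact that, as $|n|\to\infty$, the kernel $K_{\bH^{(k)}}(n)$ agrees asymptotically with the classical Riesz kernel $c_d\,n_k/|n|^{d+1}$.

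Concretely, I would first introduce the continuous companion $\wt T_{\bH^{(k)}}:L^p(\R^d)\to L^p(\R^d)$ defined by $\wt T_{\bH^{(k)}} F(x) := \sum_{m\in\Z^d} K_{\bH^{(k)}}(m) F(x-m)$ and verify, by the same $\ell^p$-$L^p$ step function argument as in the proof of Proposition~\ref{homeker}, that $\|\wt T_{\bH^{(k)}}\|_{L^p\to L^p} = \|T_{\bH^{(k)}}\|_{\ell^p\to\ell^p}$. Next, introduce the $L^p$-preserving dilation $\tau_\eps F(x) := \eps^{d/p} F(\eps x)$ and the rescaled operator
$$\wt T_{\bH^{(k)}}^\eps F(x) \;:=\; \tau_{1/\eps}\,\wt T_{\bH^{(k)}}\,\tau_\eps F(x) \;=\; \sum_{m\in\Z^d} K_{\bH^{(k)}}(m) F(x - \eps m),$$
which has the same $L^p$ operator norm for every $\eps>0$. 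The heart of the argument is to show that for $F \in C_c^\infty(\R^d)$,
$$\wt T_{\bH^{(k)}}^\eps F \;\longrightarrow\; R^{(k)} F \quad\text{pointwise as }\eps\to 0^+.$$
Fatou's lemma then yields $\|R^{(k)}F\|_{L^p}\leq \|\wt T_{\bH^{(k)}}\|_{L^p\to L^p}\|F\|_{L^p}$, which gives the required inequality.

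The core technical step---and the one I expect to be the main obstacle---is the kernel expansion
$$K_{\bH^{(k)}}(n) \;=\; \frac{c_d\,n_k}{|n|^{d+1}} + R(n), \qquad |R(n)|\leq \frac{C}{|n|^{d+\delta}}\ \text{for some }\delta>0,$$
together with the Calderón--Zygmund-type bound $|K_{\bH^{(k)}}(n)|\leq C|n|^{-d}$ needed to justify the Riemann-sum passage. By Theorem~\ref{DisProbVSDiscRies} and Proposition~\ref{DisRieszKern1},
$$R(n) \;=\; \int_{\R^d}\int_0^\infty U_n(x,y)\left[\frac{1}{h(x,y)}-1\right]dy\,dx.$$
Lemma~\ref{lem:hlimit} provides $|1/h(x,y)-1|=O(1/y)$ uniformly in $x$ for large $y$, while \eqref{eq:hupperbound2} controls $1/h$ at small $y$; combined with the explicit decay of $U_n(x,y)$, which is concentrated near the segment joining $0$ to $n$, these should produce the required polynomial decay of $R$. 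The delicate point will be that $1/h$ has mild singularities at each integer point of the hyperplane $y=0$, so these must be balanced against the weight $U_n$ uniformly in $n$. Once the expansion is secured, the principal Riesz part of $\wt T_{\bH^{(k)}}^\eps F$ converges to $R^{(k)}F$ by exactly the Riemann-sum calculation carried out in~\eqref{RiemSumConv}, while the remainder $\sum_m R(m)F(x-\eps m)$ vanishes as $\eps\to 0$ by dominated convergence, using both the absolute summability of $R$ and the vanishing of $\sum_m R(m)$ forced by the antisymmetry $R(-n)=-R(n)$ (which $R$ inherits from $K_{\bH^{(k)}}$ via Remark~\ref{Properties-KH}).
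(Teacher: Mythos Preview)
Your overall strategy is exactly the paper's: pass to the continuous-discrete operator $\wt T_{\bH^{(k)}}$, rescale by $\tau_\eps$, prove pointwise convergence of $\wt T_{\bH^{(k)}}^\eps F$ to $R^{(k)}F$ for $F\in C_c^\infty$, and conclude by Fatou. The split into a Riesz Riemann sum plus a remainder is also the same.

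The one place you diverge from the paper is in the strength of the kernel asymptotic. You aim for a quantitative rate $|R(n)|\le C|n|^{-d-\delta}$ and then dispose of the remainder by absolute summability and dominated convergence. The paper (Lemma~\ref{lem:DiscreteAsymptotics}) proves only the qualitative statement $|n|^d|R(n)|\to 0$, obtained by dominated convergence after the scaling $x\mapsto |n|x$, $y\mapsto |n|y$; no rate drops out of that argument directly. To compensate, the paper's remainder estimate (Lemma~\ref{lem:DiscreteApproximation}) first symmetrizes using the oddness of the kernel, writes $\sum_n(F(x+\eps n)-F(x-\eps n))R(n)$, extracts the Lipschitz factor $|\eps n|$, and then runs a standard $\delta$--$r$ splitting that needs only $R(n)=o(|n|^{-d})$. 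Your route is cleaner once the rate is in hand, but you should be aware that establishing $\delta>0$ requires more than Lemma~\ref{lem:hlimit} alone: you must also quantify the contribution from the strip $\{|n|y\lesssim 1\}$, where $1/h$ is not close to $1$. This can be done (the estimates in Section~\ref{ProbContRiesz} show the continuous analogue $J^{(k)}$ is in $L^1$, and a careful version of the scaling argument gives $|R(n)|\le C|n|^{-d-1}\log|n|$), but it is extra work the paper sidesteps entirely.
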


The result will follow from the next two lemmas. 
\begin{lemma}\label{lem:DiscreteAsymptotics} 
With the notation introduced earlier in this section, we have
$$
	\lim_{|n| \to \infty} |n|^{d} |\bK_{\bH^{(k)}}(n) - K_{R_{\dis}^{(k)}}(n) | = 0.
$$
\end{lemma}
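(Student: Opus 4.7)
The plan is to start from the integral representations in~\eqref{eq:rep_jn} and Proposition~\ref{DisRieszKern1}, which for $|n|\geq 1$ combine to give
\[
 K_{\bH^{(k)}}(n) - K_{R_{\dis}^{(k)}}(n) = \int_0^\infty\int_{\R^d} U_n(x,y)\,\frac{1 - h(x,y)}{h(x,y)}\,dx\,dy,
\]
and the goal is to show this is $o(|n|^{-d})$; in fact the strategy will establish the stronger bound $O(|n|^{-d-1})$. The key idea is to split the domain at $y=1$ and treat the two parts by different methods: on $\{y\geq 1\}$ the factor $1/h-1$ is exponentially small in $y$, while on $\{0<y<1\}$ the periodicity of $h$, together with a decomposition of $\R^d$ into unit cubes, will provide sufficient control.

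For $y\geq 1$, the Poisson summation representation~\eqref{FourierInv} immediately gives $|h(x,y)-1|\leq \sum_{m\neq 0}e^{-2\pi|m|y}\leq C_d\, e^{-2\pi y}$, and since Lemma~\ref{lem:hlimit} yields a uniform lower bound $h\geq C_1$ for $y\geq 1$, this becomes $|1/h-1|\leq C e^{-2\pi y}$. Splitting the $x$-integral at $|x|=|n|/2$ and using the scaling identity $\int_{\R^d}|x_k|(|x|^2+y^2)^{-(d+3)/2}\,dx=C/y^2$ (and the analogue with exponent $(d+5)/2$ for $T_n$), I can bound $\int_{\R^d}|U_n(x,y)|\,dx\leq C|n|^{-d-1}$ for $y\leq|n|$ and $\int_{\R^d}|U_n(x,y)|\,dx\leq Cy^{-d-1}$ for $y>|n|$. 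Integrating against $e^{-2\pi y}$ then produces the desired $O(|n|^{-d-1})$ contribution from this part.

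The harder region is $0<y<1$, where $1/h-1$ is not pointwise small. Here the plan is to decompose $\R^d=\bigsqcup_{m\in\Z^d}(m+Q)$ and use the periodicity $h(x,y)=h(x-m,y)$ together with the lower bound $h(x-m,y)\geq p_0(x-m,y)=c_d y/(|x-m|^2+y^2)^{(d+1)/2}$, which yields $1/h(x,y)\leq C/y$ on each cube $m+Q$. The essential cancellation is that this $1/y$ merges with the $y^2$ factor appearing in $S_n$ (respectively $y^4$ in $T_n$). The cube $m=0$ contributes a factor $|n|^{-d-1}$ from $|x-n|\geq|n|/2$, leaving the integral $\int_0^1\int_Q|x_k|y(|x|^2+y^2)^{-1}\,dx\,dy$, which is uniformly finite (logarithmic in $y$ for $d=1$, bounded for $d\geq 2$). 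The cube $m=n$ is symmetric (via $|x|\geq|n|/2$) and gives the smaller $O(|n|^{-d-2})$. For cubes $m\neq 0,n$ the bounds $|x|\geq|m|/2$ and $|x-n|\geq|n-m|/2$ yield a per-cube contribution $\lesssim (|m|+1)/(|m|^{d+3}|n-m|^{d+1})$, and summing over $m$ by splitting the zones $|m|\leq|n|/2$, $|n|/2<|m|\leq 2|n|$, and $|m|>2|n|$ produces $O(|n|^{-d-1})$, with each zone's convergence ultimately resting on $\sum_{m\neq 0}|m|^{-d-2}<\infty$.

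Putting the two regions together yields $|K_{\bH^{(k)}}(n)-K_{R_{\dis}^{(k)}}(n)|\leq C|n|^{-d-1}$, and multiplying by $|n|^d$ gives the claimed decay $O(|n|^{-1})\to 0$. The hardest step will be the third subcase of the $y<1$ region, where the summation over lattice cubes requires the three-zone split; everything else reduces to careful but essentially routine scaling estimates based on the representations of $U_n=4S_n-3T_n$ in Section~\ref{PDR}.
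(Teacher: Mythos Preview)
Your argument is correct and in fact yields the quantitative bound $|K_{\bH^{(k)}}(n)-K_{R_{\dis}^{(k)}}(n)|=O(|n|^{-d-1})$, which is stronger than the paper's $o(|n|^{-d})$. The key step on the cube $m=0$ works precisely because you use the \emph{sharp} lower bound $h(x,y)\geq p_0(x,y)=c_d y(|x|^2+y^2)^{-(d+1)/2}$ rather than merely $h\gtrsim y$: the factor $(|x|^2+y^2)^{(d+1)/2}$ cancels against the $(|x|^2+y^2)^{-(d+3)/2}$ in $S_n$, leaving the convergent integral $\int_0^1\int_Q|x_k|\,y\,(|x|^2+y^2)^{-1}\,dx\,dy$ you wrote down. (A minor point: for cubes with $|m|$ or $|n-m|$ comparable to $\sqrt{d}$, the inequalities $|x|\geq|m|/2$ and $|x-n|\geq|n-m|/2$ need a small adjustment, but these are finitely many cubes and are handled like the $m=0$ and $m=n$ cases.)

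The paper takes a rather different route. It rescales by $\varepsilon=1/|n|$, using the homogeneity $U_n(x,y)=\varepsilon^{2d+1}U_{\varepsilon n}(\varepsilon x,\varepsilon y)$ to write
\[
|n|^d\bigl(K_{\bH^{(k)}}(n)-K_{R_{\dis}^{(k)}}(n)\bigr)
=\int_{\R^d}\int_0^\infty U_{n/|n|}(x,y)\Bigl(\frac{1}{h(|n|x,|n|y)}-1\Bigr)\,dy\,dx.
\]
Then~\eqref{eq:hupperbound2} and Lemma~\ref{lem:hlimit} give a uniform bound and pointwise decay for the bracketed factor, and after composing with an orthogonal map sending $n/|n|$ to a fixed unit vector, a single $n$-independent integrable majorant allows dominated convergence. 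This is shorter and avoids the lattice-cube bookkeeping, but it only yields $o(1)$ after multiplying by $|n|^d$, with no rate. Your direct estimate trades elegance for a genuine quantitative improvement.
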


\begin{proof}
Recall that
$$
	\bK_{\bH^{(k)}}(n)
	= \int_{\R^{d}} \int_0^\infty \frac{U_n(x, y)}{h(x, y)} \, dy dx , \qquad K_{R_{\dis}^{(k)}}(n)
	= \int_{\R^{d}} \int_0^\infty U_n(x, y) dy dx ,
$$
where
$$
	U_n(x, y)
	= -4 \parder{p_0}{x_k}(x, y) \, \parder{}{y} (y p_n(x, y)) \, .
$$
Observe that $p_0(x, y) = \varep^{d + 1} p_0(\varep x, \varep y)$ and $p_n(x, y) = \varep^{d + 1} p_{\varep n}(\varep x, \varep y)$. Therefore,
$$
	U_n(x, y)
	= \varep^{2 d + 1} U_{\varep n}(\varep x, \varep y) ;
$$
here and below we abuse the notation and we allow $n$ in $p_n$ and $U_n$ to be an arbitrary vector in $\R^{d}$. It follows that
\begin{align*}
	\bK_{\bH^{(k)}}(n)
	& = \int_{\R^{d}} \int_0^\infty \frac{U_n(x, y)}{h(x, y)} \, dy dx \\
	& = \varep^{2 d + 1} \int_{\R^{d}} \int_0^\infty \frac{U_{\varep n}(\varep x, \varep y)}{h(x, y)} \, dy dx \\
	& = \varep^{d} \int_{\R^{d}} \int_0^\infty \frac{U_{\varep n}(x, y)}{h(\frac{1}{\varep} x, \frac{1}{\varep} y)} \, dy dx .
\end{align*}
If we choose $\varep = \tfrac{1}{|n|}$, we find that
$$
	|n|^{d} \bK_{\bH^{(k)}}(n)
	= \int_{\R^{d}} \int_0^\infty \frac{U_{n / |n|}(x, y)}{h(|n| x, |n| y)} \, dy dx .
$$
Similarly,
$$
	|n|^{d} K_{R_{\dis}^{(k)}}(n) = \int_{\R^{d}} \int_0^\infty U_{n / |n|}(x, y) dy dx .
$$
Therefore,
\begin{equation}\label{eq:unformlyintegrableconvergence}
	|n|^{d} |\bK_{\bH^{(k)}}(n) - K_{R_{\dis}^{(k)}}(n) |
	\le \int_{\R^{d}} \int_0^\infty |U_{n / |n|}(x, y)| \times \biggl|\frac{1}{h(|n| x, |n| y)} - 1\biggr| dy dx .
\end{equation}
By the estimate~\eqref{eq:hupperbound2}, we have
\begin{equation*}
	\frac{y}{\sqrt{x^2 + y^2}} \, {\biggl(}\frac{1}{h(|n| x, |n| y)} {{}+{}} 1{\biggr)} \leq C_1
\end{equation*}
for a constant $C_1$ that depends only on the dimension $d$, and by Lemma~\ref{lem:hlimit}, the left-hand side converges pointwise to zero as $|n| \to \infty$. On the other hand, by the explicit expression for $U_{n / |n|}$ given in Theorem~\ref{DisProbVSDiscRies}, we have
\begin{align*}
	\frac{\sqrt{x^2 + y^2}}{y} \, |U_{n / |n|}(x, y)| 
	& \le \frac{C_2 \sqrt{x^2 + y^2}}{y} \, \frac{|x_k| y^2}{(|x|^2 + y^2)^{\frac{d + 3}{2}} (|x - \frac{n}{|n|}|^2 + y^2)^{\frac{d + 1}{2}}} \\
	& \qquad + \frac{C_3 \sqrt{x^2 + y^2}}{y} \, \frac{|x_k| y^4}{(|x|^2 + y^2)^{\frac{d + 3}{2}} (|x - \frac{n}{|n|}|^2 + y^2)^{\frac{d + 3}{2}}} \\
	& = C_2 \, \frac{|x_k| y}{(|x|^2 + y^2)^{\frac{d + 2}{2}} (|x - \frac{n}{|n|}|^2 + y^2)^{\frac{d + 1}{2}}} \\
	& \qquad + C_3 \, \frac{|x_k| y^3}{(|x|^2 + y^2)^{\frac{d + 2}{2}} (|x - \frac{n}{|n|}|^2 + y^2)^{\frac{d + 3}{2}}} \\
	& \le C_4 \, \frac{y}{(|x|^2 + y^2)^{\frac{d + 1}{2}} (|x - \frac{n}{|n|}|^2 + y^2)^{\frac{d + 1}{2}}}
\end{align*}
for some constants $C_2$, $C_3$ and $C_4$ that again depend only on $d$. We have thus shown that
\begin{equation}\label{eq:Unbound}
	|U_{n / |n|}(x, y)| \times {\biggl(}\frac{1}{h(|n| x, |n| y)} {{}+{}} 1{\biggr)} \leq C_1 C_4 \, \frac{y}{(|x|^2 + y^2)^{\frac{d + 1}{2}} (|x - \frac{n}{|n|}|^2 + y^2)^{\frac{d + 1}{2}}} \, ,
\end{equation}
and additionally, as $|n| \to \infty$, the left-hand side converges point-wise to zero. Observe that if we denote by $\mathbb{O}_n$ an orthogonal transformation of $\R^{d}$ which maps $\frac{n}{|n|}$ to $e_1 = (1, 0, 0, \ldots, 0)$, then the above estimate takes form
\begin{equation*}
	|U_{n / |n|}(\mathbb{O}_n x, y)| \times \biggl|\frac{1}{h(|n| \mathbb{O}_n x, |n| y)} - 1\biggr| \leq C_1 C_4 \, \frac{y}{(|x|^2 + y^2)^{\frac{d + 1}{2}} (|x - e_1|^2 + y^2)^{\frac{d + 1}{2}}} \, ,
\end{equation*}
and the right-hand side no longer depends on $n$. Since the right-hand side is integrable, by the dominated convergence theorem we find that
\begin{align*}
	& \lim_{|n| \to \infty} \int_{\R^{n}} \int_0^\infty |U_{n / |n|}(x, y)| \times \biggl|\frac{1}{h(|n| x, |n| y)} - 1\biggr| dx dy \\
	& \qquad = \lim_{|n| \to \infty} \int_{\R^{n}} \int_0^\infty |U_{n / |n|}(\mathbb{O}_n x, y)| \times \biggl|\frac{1}{h(|n| \mathbb{O}_n x, |n| y)} - 1\biggr| dx dy = 0 .
\end{align*}
The desired result now follows from~\eqref{eq:unformlyintegrableconvergence}.
\end{proof}
Let us consider the continuous-discrete operator
$$
    \wt{T}_{\bH^{(k)}} F(x) = \sum_{n \in \Z^{d} \setminus \{0\}} F(x + n) \bK_{\bH^{(k)}}(n). 
$$
Recall that $\bK_{\bH^{(k)}(0)}=0$.  By the argument of Theorem~\ref{homeker}, the norm of the operator $\wt{T}_{\bH^{(k)}}$ on $L^{p}(\R^{d})$ is equal to the norm of the operator $T_{\bH^{(k)}}$ on $\ell^{p}(\Z^{d})$.

For $\varep > 0$, $1< p<\infty$ and a function $F$ on $\R^{d}$, denote $\tau_{\varep} F(x) = \varep^{d/p} F(\varep x)$ and $\wt{T}_{\bH^{(k)}}^\varep = \tau_{1/\varep} \wt{T}_{\bH^{(k)}} \tau_{\varep}$. Observe that $\|\tau_{\varep} F\|_p = \|F\|_p$, and hence the norm of the operator $\wt{T}_{\bH^{(k)}}^\varep$ on $L^{p}(\R^{d})$ does not depend on $\varep$.

We claim that as $\varep \to 0^+$, the operators $\wt{T}_{\bH^{(k)}}^\varep$ approximate the continuous Riesz transform in \eqref{classicalCRT-1}.  More precisely we have 

\begin{lemma} \label{lem:DiscreteApproximation}
Suppose that $F$ is a smooth and compactly supported function on $\R^d$. Then
$$
	\lim_{\varep \to 0^+} \wt{T}_{\bH^{(k)}}^\varep F(x) = R^{(k)} F(x),
$$
for every $x \in \R^{d}$.
\end{lemma}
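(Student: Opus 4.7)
The plan is to decompose $K_{\bH^{(k)}}(n) = K_{R_\dis^{(k)}}(n) + \Delta(n)$ and split $\wt{T}_{\bH^{(k)}}^\varep F(x)$ into a Riemann-sum approximation of $R^{(k)}F(x)$ plus an error controlled by $\Delta(n)$. First I would unfold the dilation using $\tau_\varep F(y) = \varep^{d/p}F(\varep y)$ to obtain
\[
    \wt{T}_{\bH^{(k)}}^\varep F(x) = \sum_{n \in \Z^d \setminus \{0\}} F(x + \varep n)\, K_{\bH^{(k)}}(n),
\]
so that $\varep$ enters only through the sampling points of $F$, not through the kernel. The factor $\varep^{d/p}$ from the inner dilation cancels the $\varep^{-d/p}$ from the outer one, which is why the claim is $p$-independent.

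By Remark~\ref{Properties-KH}, both $K_{\bH^{(k)}}$ and $K_{R_\dis^{(k)}}$ are odd in $n$, and so is $\Delta$. Grouping $n$ with $-n$ I would then rewrite
\[
    \wt{T}_{\bH^{(k)}}^\varep F(x) = \tfrac{1}{2}\sum_{n \neq 0} \bigl[F(x{+}\varep n) - F(x{-}\varep n)\bigr] K_{R_\dis^{(k)}}(n) + \tfrac{1}{2}\sum_{n \neq 0} \bigl[F(x{+}\varep n) - F(x{-}\varep n)\bigr] \Delta(n).
\]
For the first sum, the homogeneity $K_{R_\dis^{(k)}}(n) = \varep^d c_d (\varep n)_k / |\varep n|^{d+1}$ recasts it as a Riemann sum for the absolutely convergent integral $\tfrac{c_d}{2}\int_{\R^d}[F(x{+}y) - F(x{-}y)]\,y_k/|y|^{d+1}\,dy$, which reproduces $R^{(k)}F(x)$ (up to the sign dictated by the conventions in~\eqref{classicalCRT}) exactly as in the derivation of~\eqref{RiemSumConv} in the proof of Proposition~\ref{homeker}.

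The heart of the argument is showing that the error sum involving $\Delta(n)$ vanishes as $\varep \to 0^+$. Given $\eta > 0$, use Lemma~\ref{lem:DiscreteAsymptotics} to choose $M$ with $|n|^d |\Delta(n)| < \eta$ for all $|n| > M$, and split at $|n| = M$. For $|n| \leq M$ the mean value theorem gives $|F(x{+}\varep n) - F(x{-}\varep n)| \leq 2\varep M \|\nabla F\|_\infty$, and the finite sum is $O(\varep)$ with $M$ fixed. For $|n| > M$, the compact support of $F$ truncates the sum at $|n| \leq R/\varep$ for some $R = R(F,x)$; combining $|F(x{+}\varep n) - F(x{-}\varep n)| \leq 2\varep |n| \|\nabla F\|_\infty$ with $|\Delta(n)| \leq \eta/|n|^d$ bounds the tail by
\[
    C \varep \eta \sum_{M < |n| \leq R/\varep} |n|^{1-d} = O(\eta)
\]
uniformly in $\varep$, because $\sum_{|n| \leq N} |n|^{1-d} = O(N)$ in every dimension $d \geq 1$. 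Sending $\varep \to 0^+$ first and $\eta \to 0^+$ afterwards completes the argument.

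The main obstacle is precisely this borderline error estimate: Lemma~\ref{lem:DiscreteAsymptotics} supplies only $|\Delta(n)| = o(|n|^{-d})$, which is just barely not absolutely summable in $\R^d$. Plain absolute bounds would lose by a logarithmic factor, and only the combination of the $\varep |n|$ gain from the smoothness of $F$ together with the $|n| \lesssim 1/\varep$ truncation from compact support converts the would-be logarithmic divergence into a constant multiple of $\eta$.
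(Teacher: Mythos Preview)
Your proposal is correct and follows essentially the same approach as the paper's proof: the same decomposition $K_{\bH^{(k)}} = K_{R_{\dis}^{(k)}} + \Delta$, the same symmetrization via oddness, the same appeal to~\eqref{RiemSumConv} for the Riemann-sum term, and the same treatment of the error term by splitting at a radius $M$ chosen from Lemma~\ref{lem:DiscreteAsymptotics}, using the mean-value bound on $F$ together with the compact-support cutoff $|n|\lesssim R/\varep$ to turn the borderline sum $\sum |n|^{1-d}$ into a quantity of order $\eta$. The only differences are notational.
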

\begin{proof}
We write
\begin{equation}\label{eq:DiscreteApproximation}\begin{aligned}
	\wt{T}_{\bH^{(k)}}^\varep F(x)
	& = \sum_{n \in \Z^{d} \setminus \{0\}} F(x + \varep n) \bK_{\bH^{(k)}}(n) \\
	& = \frac{1}{2} \sum_{n \in \Z^{d} \setminus \{0\}} (F(x + \varep n) - F(x - \varep n)) \bK_{\bH^{(k)}}(n) \\
	& = \frac{1}{2} \sum_{n \in \Z^{d} \setminus \{0\}} (F(x + \varep n) - F(x - \varep n)) K_{R_{\dis}^{(k)}}(n) \\
	& \qquad + \frac{1}{2} \sum_{n \in \Z^{d} \setminus \{0\}} (F(x + \varep n) - F(x - \varep n)) (\bK_{\bH^{(k)}}(n) - K_{R_{\dis}^{(k)}}(n)) .
\end{aligned}\end{equation}
We treat the two terms on the right-hand side separately.

Since $ K_{R_{\dis}^{(k)}}(n) = \varep^d K_{R^{(k)}}(\varep n)$, the first term in the right-hand side of~\eqref{eq:DiscreteApproximation} is just the Riemann sum
$$
	\frac{\varep^d}{2} \sum_{n \in \Z^{d} \setminus \{0\}} (F(x + \varep n) - F(x - \varep n)) K_{R^{(k)}}(\varep n)
$$
of the integral
$$
	R^{(k)} F(x) = \frac{1}{2} \int_{\R^d} (F(x + y) - F(x - y)) K_{R^{(k)}}(y) dy .
$$
By \eqref{RiemSumConv} of Theorem~\ref{homeker} applied to the kernels for $R^{(k)}$ we have, 
\begin{align*}
	& \lim_{\varep \to 0^+} \frac{\varep^d}{2} \sum_{n \in \Z^{d} \setminus \{0\}} (F(x + \varep n) - F(x - \varep n)) K_{R^{(k)}}(\varep n) \\
	& \qquad = \frac{1}{2} \int_{\R^d} (F(x + y) - F(x - y)) K_{R^{(k)}}(y) dy .
\end{align*}

Thus, it remains to prove that the other term in the right-hand side of~\eqref{eq:DiscreteApproximation} converges to zero. To this end, we apply Lemma~\ref{lem:DiscreteAsymptotics}. First, there is a constant $C_1$ (which depends on $F$) such that $|F(x + y) - F(x - y)| \le C_1 |y|$. If $R > 0$ is large enough, so that $F(x + y) = 0$ whenever $|y| \ge R$, we have
$$\begin{aligned}
	& \frac{1}{2} \biggl| \sum_{n \in \Z^{d} \setminus \{0\}} (F(x + \varep n) - F(x - \varep n)) (\bK_{\bH^{(k)}}(n) - K_{R_{\dis}^{(k)}}(n)) \biggr| \\
	& \qquad \le \frac{C_1}{2} \sum_{n \in \Z^{d} \setminus \{0\}} \ind_{B(0, R)}(\varep n) |\varep n| \, |\bK_{\bH^{(k)}}(n) - K_{R_{\dis}^{(k)}}(n) |.
\end{aligned}$$
Given any $\delta > 0$, by Lemma~\ref{lem:DiscreteAsymptotics} there is $r > 0$ such that $|\bK_{\bH^{(k)}}(n) - K_{R_{\dis}^{(k)}}(n) | < \delta |n|^{-d}$ when $|n| \geq r$. Thus, denoting $C_2 = \sup_{n \ne 0} |\bK_{\bH^{(k)}}(n) - K_{R_{\dis}^{(k)}}(n) |$, we find that
$$\begin{aligned}
	& \frac{1}{2} \biggl| \sum_{n \in \Z^{d} \setminus \{0\}} (F(x + \varep n) - F(x - \varep n)) (\bK_{\bH^{(k)}}(n) - K_{R_{\dis}^{(k)}}(n)) \biggr| \\
	& \qquad \le \frac{C_1 \delta}{2} \sum_{n \in \Z^{d} \setminus \{0\}} \ind_{B(0, R)}(\varep n) |\varep n| \, |n|^{-d} + \frac{C_1 C_2}{2} \sum_{n \in \Z^{d} \setminus \{0\}} \ind_{B(0, r)}(n) |\varep n| \\
	& \qquad \le \frac{C_1 \delta \varep}{2} \sum_{n \in \Z^{d} \setminus \{0\}} \ind_{B(0, R / \varep)}(n) |n|^{1 - d} + \frac{C_1 C_2 \varep}{2} \sum_{n \in \Z^{d} \setminus \{0\}} \ind_{B(0, r)}(n) |n| .
\end{aligned}$$
The first sum in the right-hand side is bounded by $\frac{C_3}{\eps}$ for an appropriate constant $C_3$, and the second one does not depend on $\eps$. Therefore,
$$
	{\limsup_{\eps \to 0^+} \frac{1}{2} \biggl| \sum_{n \in \Z^{d} \setminus \{0\}} (F(x + \varep n) - F(x - \varep n)) (\bK_{\bH^{(k)}}(n) - K_{R_{\dis}^{(k)}}(n)) \biggr| \le \frac{C_1 C_3 \delta}{2} .}
$$
Since $\delta > 0$ is arbitrary, we conclude that 
$$
	\lim_{\varep \to 0^+} \frac{1}{2} \biggl| \sum_{n \in \Z^{d} \setminus \{0\}} (F(x + \varep n) - F(x - \varep n)) (\bK_{\bH^{(k)}}(n) - K_{R_{\dis}^{(k)}}(n)) \biggr| = 0 ,
$$
and the proof is complete.
\end{proof}

Applying Fatou's lemma, exactly as in the proof of Theorem~\ref{homeker}, proves that $\|R^{(k)}\|_{L^p\to L^p}\leq \|T_{\bH^{(k)}}\|_{\ell^p\to\ell^p}$. This and the equality  \eqref{SCRiesz} give the assertion of Theorem~\ref{bestpro-dis-riesz}.

\begin{remark} 
From the martingale inequality in \cite{BanWang}*{Theorem 1} we also obtain the following version of Ess\'en's inequality for the probabilistic discrete Riesz transforms 
\begin{align}\label{eq:Essen}
    \left\|\left(|T_{\bH^{(k)}} f|^2+|f|^2\right)^{\sfrac12}\right\|_{\ell^p}\leq \sqrt{1+\left(\cot\Big(\frac{\pi}{2p^{\ast}}\Big)\right)^2}\|f\|_{\ell^p}. 
\end{align} 
Let $F\in L^p(\R^d)$ be such that $F(x)=f(n)$ for $x\in n+Q$, $n\in\Z^d$. Then, it follows from Lemma~\ref{lem:DiscreteApproximation} with Fatou's lemma that
\begin{align*}
    \left\|\left(|\widetilde{T}_{\bH^{(k)}} F|^2+|F|^2\right)^{\sfrac{1}{2}}\right\|_{L^p} 
    \geq \left\|\left(|R^{(k)} F|^2+|F|^2\right)^{\sfrac12}\right\|_{L^p}.
\end{align*}
Let $\tau_\varep F(x) = \varep^{d/p}F(\varep x)$. Since $\tau_{1/\varep}R^{(k)}\tau_\varep= R^{(k)}$ and $\|\tau_\varep F\|_p=\|F\|_p$, we have
\begin{align*}
    \left\|\left(|R^{(k)} F|^2+|F|^2\right)^{\sfrac12}\right\|_{L^p}
    = \left\|\left(|R^{(k)} (\tau_\varep F)|^2+|\tau_\varep F|^2\right)^{\sfrac12}\right\|_{L^p}.
\end{align*}
Since any function $G\in L^p(\R^d)$ can be approximated by $\tau_\varep F$ where $F(x)$ is of the form $F(x)=\sum_{n\in\Z^d}f(n)\ind_{n+Q}(x)$ for $f\in\ell^p(\Z^d)$, with the help of the Fatou's lemma, we see that the inequality~\eqref{eq:Essen} is also sharp.  
\end{remark} 

\begin{remark} 
Similarly, using the matrices $A_{jk}$ as in \cite{BanWang}*{pg.~595} would lead to what one  may call ``probabilistic discrete  second order Riesz transforms'' with $\ell^p$-norms bounded above by $(p^*-1)$. Notice, however,  that even if we had  the analogues of the above Lemmas for these operators (which we do not currently have), the $(p^*-1)$ bound will not be sharp. Instead one would expect the sharp bound to be  $\frac{1}{2}(p^*-1)$ when $j\ne k$ and the Choi constant when $j=k$, see \cite{GesMonSak, BanOse1}.  Similar questions could be asked about the probabilistic discrete Beurling--Ahlfors operator,  its sharp norm on $\ell^p$  and the relationships to the discrete Beurling--Ahlfors operator which Calder\'on and Zygmund highlight in their discussion on discrete singular integrals, see \cite[pg. 138]{CZ}.  Based on Iwaniec's conjecture \cite{Iwa82} that the norm of the Beurling--Ahlfors operator on $L^p(\R^2)$ is $(p^*-1)$, $1<p<\infty$, one would conjecture that the CZ discrete Beurling--Ahlfors operator should also have norm $(p^*-1)$ on $\ell^p(\Z^2)$.  We have not explored these questions. 
\end{remark} 

\section{Fourier multiplier of the probabilistic discrete Hilbert transform}\label{FMPDH} 
In this section, we focus on the case $d=1$ and compute the Fourier transform of the probabilistic discrete Hilbert transform $T_{\bH}$ whose kernel is given by 
\begin{align}\label{disProbRiesz} 
    \bK_{\bH}(n) =-4\int_{\R}\int_{0}^{\infty}\frac{1}{h}\parder{p}{x}\parder{}{y}(yp_n)\,dydx.
\end{align}

This representation for the kernel of the probabilistic discrete Hilbert transform $T_{\bH}$ together with the computation from Proposition \ref{DisRieszKern1} makes it clear that there is a connection between this operator and the discrete Hilbert transform $H_{\dis}$. However,  this by itself does not yet give the bound $\|H_{\dis}\|_{\ell^p\to \ell^p}\leq \cot(\frac{\pi}{2p^*})$. In order to derive this bound from the bound of  $\|T_{\bH}\|_{\ell^p\to \ell^p}$, as already noted in the introduction, one needs to show that up to convolution with a probability kernel, the discrete Hilbert transform equals the probabilistic discrete Hilbert transform.  This crucial fact was derived in \cite{BanKwa}*{Lemma 1.3} using explicit computations to construct such a kernel. In what follows we provide a completely different proof of this fact, based on the formula from Lemmas \ref{DPG-forlula1}, \ref{DPG-formula2}, which gives an explicit formula for the Fourier transform of such a kernel. Although not clear at all at this point, it may be possible that such an approach based on the Fourier transform (as opposed to the complex variables approach in \cite{BanKwa}) could lead to similar results for the discrete Riesz transforms $R^{(k)}_{\dis}$ in $d>1$.  

Again from  \eqref{Explicitd=1}  
$h(x,y) =\frac{\sinh(2\pi y)}{\cosh(2\pi y)-\cos(2\pi x)} \, ,$
which gives 
\begin{align}\label{hinverse}
 \frac{1}{h(x, y)}=\coth(2\pi y) -\frac{1}{\sinh(2\pi y)} \cos(2\pi x)
\end{align} 
(a linear combination of 1 and $\cos(2\pi x)$ for $y$ fixed). This is crucial for the computations below.   

Let $\psi(x)$ be the digamma function defined by
\begin{align*}
    \psi(x) 
    = \frac{d}{dx}\log(\Gamma(x)) = \frac{\Gamma'(x)}{\Gamma(x)}
    =-\upgamma + \int_0^\infty \frac{e^{-y}-e^{-xy}}{1-e^{-y}}\, dy
\end{align*}
where $\upgamma$ is the Euler constant. 

\begin{theorem}\label{BanKwaMain} 
There exists a kernel $\cP$ such that $\cP(n)\ge 0$ for all $n\in\Z$, $\sum_{n\in\Z}\cP(n)=1$, and $H_{\dis}=T_{\bH}\ast \cP$.  That is, for all $f:\Z\to \R$ of compact support, 
$$H_{\dis}f(n)=(T_{\bH}\ast \cP)f(n),$$ 
where $\ast$ denotes the convolution operation. 
\end{theorem}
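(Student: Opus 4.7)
The plan is to work entirely on the Fourier side and recover $\cP$ from its Fourier transform. Applying $\cF$ to the desired identity $H_{\dis} = T_{\bH} * \cP$ turns it into the pointwise equation $\cF(H_{\dis})(\xi) = \cF(T_{\bH})(\xi)\,\cF(\cP)(\xi)$ on the fundamental cube $Q = [-\tfrac12,\tfrac12)$. Accordingly, I would define
\[
\varphi(\xi) := \frac{\cF(H_{\dis})(\xi)}{\cF(T_{\bH})(\xi)}
\]
and verify, via Bochner's theorem on the torus, that $\varphi$ is a continuous positive-definite function with $\varphi(0)=1$. Its inverse Fourier transform $\cP$ is then automatically a nonnegative sequence on $\Z$ summing to $1$, and the identity $H_{\dis}=T_{\bH}*\cP$ holds by construction.

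First I would compute the two multipliers explicitly. The classical Fourier series $\sum_{m\ge 1}\frac{\sin(2\pi m\xi)}{m}=\pi(\tfrac12-\xi)$ on $(0,\tfrac12)$ immediately gives $\cF(H_{\dis})(\xi)=-i\,\sgn(\xi)(1-2|\xi|)$. For $\cF(T_{\bH})$, I would substitute the representation \eqref{disProbRiesz} for $K_{\bH}(n)$ into $\cF(T_{\bH})(\xi)=\sum_{n\in\Z}K_{\bH}(n)e^{-2\pi in\xi}$, interchange summation and integration, and use the Poisson summation identity \eqref{FourierInv} together with the explicit closed form \eqref{Explicitd=1} of $h(x,y)$. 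Here $1/h(x,y)$ is an affine function of $\cos(2\pi x)$ in $x$ for each fixed $y$, which reduces the $x$-integration to elementary terms, leaving a one-variable $y$-integral that the formulas from Lemmas~\ref{DPG-forlula1} and~\ref{DPG-formula2} express in closed form in terms of the digamma function $\psi$.

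Next I would verify that $\varphi$ extends continuously to $Q$ and satisfies $\varphi(0)=1$. Both $\cF(H_{\dis})$ and $\cF(T_{\bH})$ are odd and vanish to first order at $\xi=0$, so Taylor-expanding the numerator and denominator at the origin yields a removable singularity, and a direct comparison of the leading coefficients forces $\varphi(0)=1$ (which is the statement $\sum_n \cP(n)=1$); away from $\xi=0$ the denominator is checked to be nonzero from the digamma-function formula.

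The main obstacle is the positive-definiteness of $\varphi$, which is precisely where Lemma~\ref{lem:negdef} is intended to enter. The plan is to massage the digamma-function expression for $\varphi$ into a Bernstein-type superposition
\[
\varphi(\xi) = \int_0^\infty e^{-t\,\Phi(\xi)}\,d\mu(t),
\]
for a negative-definite function $\Phi$ on $\T$ and a nonnegative measure $\mu$ on $(0,\infty)$ with $\int_0^\infty d\mu = 1$; equivalently, a L\'evy--Khintchine representation of $\varphi$ on the group $\T$. Since each $e^{-t\Phi(\cdot)}$ is the characteristic function of an infinitely divisible probability measure on $\Z$ (by Lemma~\ref{lem:negdef}), $\varphi$ is a mixture of positive-definite functions and hence positive-definite; Bochner's theorem then delivers the probability kernel $\cP$. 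Producing the explicit integral representation from the digamma formula for $\cF(T_{\bH})$ is the delicate computational step and is where the Fourier-analytic route replaces the complex-variable manipulations of~\cite{BanKwa}. Once $\cP$ is in hand, the sharp bound $\|H_{\dis}\|_{\ell^p\to\ell^p}\le \cot(\pi/(2p^\ast))$ follows at once from Theorem~\ref{ProbRbound} (with $d=1$) and the fact that convolution with a probability measure is an $\ell^p$-contraction.
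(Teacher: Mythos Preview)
Your overall strategy---pass to the Fourier side, form the ratio $\varphi=\cF(H_{\dis})/\cF(T_{\bH})$, and invoke Bochner on the torus---is exactly what the paper does. The computation of the two multipliers is carried out in Lemmas~\ref{DPG-forlula1} and~\ref{DPG-formula2}, and indeed $\varphi(\xi)=(1-2|\xi|)/\wt M(\xi)$ with $\varphi(0)=1$.

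The gap is in your treatment of positive-definiteness. You propose to ``massage'' $\varphi$ into a Bernstein superposition $\int_0^\infty e^{-t\Phi}\,d\mu(t)$ for some negative-definite $\Phi$, but you never identify $\Phi$ or explain how to verify it is negative-definite---and this is the entire content of the argument. The paper makes the specific choice $\Phi(\xi)=u(\xi):=\wt M(\xi)/(1-2|\xi|)-1$, so that $\varphi=1/(1+u)$ and your Bernstein representation is simply $\varphi=\int_0^\infty e^{-t}e^{-tu}\,dt$. The nontrivial step is then to show that $u$ is negative-definite on the torus, and this is \emph{not} what Lemma~\ref{lem:negdef} says as a black box (you mischaracterize the lemma: it is not a statement about $e^{-t\Phi}$ being a characteristic function). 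Rather, Lemma~\ref{lem:negdef} gives a concrete sufficient condition---$u$ nonnegative, periodic, $u(0)=0$, increasing and concave on $[0,\tfrac12]$---under which $u$ is negative-definite (via a P\'olya-type criterion on $\T$) and hence $1/(1+u)$ is positive-definite. The real work in the paper's proof is the explicit verification that $u(\xi)=\wt M(\xi)/(1-2|\xi|)-1$ is increasing and concave on $[0,\tfrac12]$, which requires writing out $u''$ in terms of polygamma functions, expanding in partial-fraction series, and checking a sign term by term. Your proposal contains no substitute for this computation; without it, the positive-definiteness of $\varphi$ remains an assertion.
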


An immediate corollary of this is the main result in \cite{BanKwa}. 
\begin{corollary}\label{BanKwaCorollary} 
$$\|H_{\dis}\|_{\ell^p\to \ell^p}\leq \|T_{\bH}\|_{\ell^p\to \ell^p}\leq \cot(\frac{\pi}{2p^*}).$$
\end{corollary}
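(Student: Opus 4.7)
The plan is to derive each of the two inequalities from a single application of a previously established fact, and no new machinery is needed beyond Theorems \ref{ProbRbound} and \ref{BanKwaMain}.

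The right-hand bound $\|T_{\bH}\|_{\ell^p\to\ell^p}\leq \cot(\pi/(2p^*))$ is simply the $d=1$ specialization of Theorem~\ref{ProbRbound}. Since the matrix $\bH$ in \eqref{matrixH} is orthogonal with $\|\bH\|=1$, the sharp orthogonal-martingale bound of Burkholder--Wang (Theorem~\ref{thm:MartTrans}(ii)), combined with Jensen's inequality for conditional expectations exactly as in the proof of \eqref{projTH-gen-2} in Theorem~\ref{Proj}, yields this constant. So this half is essentially immediate from prior results.

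For the left-hand inequality, the plan is to invoke Theorem~\ref{BanKwaMain}, which provides a probability kernel $\cP$ on $\Z$ (nonnegative and summing to $1$) such that $H_{\dis}=T_{\bH}\ast\cP$ as convolution operators on $\Z$. Given a compactly supported $f:\Z\to\R$, I would write
$$
H_{\dis}f(n)=\sum_{m\in\Z}\cP(m)\,(T_{\bH}f)(n-m)
$$
and then estimate the $\ell^p$-norm by Minkowski's inequality for sums:
$$
\|H_{\dis}f\|_{\ell^p}\leq \sum_{m\in\Z}\cP(m)\,\|\tau_m T_{\bH}f\|_{\ell^p}=\|T_{\bH}f\|_{\ell^p},
$$
where $\tau_m$ denotes translation by $m$, which is an isometry of $\ell^p(\Z)$, and the final equality uses $\sum_{m}\cP(m)=1$. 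Equivalently, one may quote Young's convolution inequality with $\|\cP\|_{\ell^1}=1$. Taking the supremum over $f$ with $\|f\|_{\ell^p}\leq 1$ and using the density of compactly supported functions in $\ell^p(\Z)$ gives $\|H_{\dis}\|_{\ell^p\to\ell^p}\leq \|T_{\bH}\|_{\ell^p\to\ell^p}$.

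The depth of the corollary is entirely contained in the two theorems just cited, so there is no substantive obstacle. The only point requiring a moment of care is the direction of the convolution in the identity $H_{\dis}=T_{\bH}\ast\cP$, so that it is precisely the $\ell^1$-norm of the probability kernel $\cP$ (and not some other quantity) that enters the Young estimate; once this is observed, the chain of inequalities closes and the corollary follows.
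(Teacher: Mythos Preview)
Your proposal is correct and matches the paper's approach: the paper simply declares this an ``immediate corollary'' of Theorem~\ref{BanKwaMain}, and you have spelled out exactly the two ingredients (Theorem~\ref{ProbRbound} for the right inequality and Young/Minkowski applied to $H_{\dis}=T_{\bH}\ast\cP$ for the left). The only minor remark is that since convolution on $\Z$ is commutative, the ``direction'' concern you flag is not actually an issue.
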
 

\begin{lemma}\label{DPG-forlula1} 
The kernel for the probabilistic discrete Hilbert transform is given by 
\begin{align*}
    \bK_{\bH}(n) = -i\int_\R \frac{\xi}{|\xi|} M(\xi)e^{2\pi i n\xi}\, d\xi 
\end{align*} 
where
\begin{align*}
    M(\xi) =\begin{cases}
    |\xi|^{-1}, & |\xi|\ge 1,\\[1em]
    \ds   1 +(1-|\xi|) \left(2(\psi(1+|\xi|)-\psi(1)\right) +|\xi|(\psi'(1+|\xi|)-\psi'(1)), & |\xi|<1.
    \end{cases}
\end{align*}
\end{lemma}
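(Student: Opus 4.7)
The plan is to compute $K_{\bH}(n)$ directly from~\eqref{disProbRiesz} by passing to the Fourier side in the variable $x$. The ingredients have explicit Fourier representations: $p(x,y) = \int_{\R} e^{-2\pi|\xi|y}\,e^{2\pi i x\xi}\,d\xi$ gives $\partial_x p(x,y) = \int_\R 2\pi i\xi\,e^{-2\pi|\xi|y}\,e^{2\pi ix\xi}\,d\xi$, and $\partial_y[y p_n(x,y)]$ is the inverse Fourier transform in $x$ of $e^{-2\pi in\eta}(1-2\pi|\eta|y)e^{-2\pi|\eta|y}$. The crucial one-dimensional feature to exploit is the closed form
$$\frac{1}{h(x,y)} = \coth(2\pi y) - \frac{e^{2\pi ix}+e^{-2\pi ix}}{2\sinh(2\pi y)},$$
a finite combination of the frequencies $0$ and $\pm 1$ in $x$. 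Substituting this into~\eqref{disProbRiesz} splits the $x$-integrand into three pieces, each of which may be evaluated by Plancherel: the $\coth$ piece pairs the Fourier transform of $\partial_x p$ against that of $\partial_y[yp_n]$ at the reflected frequency, while each $e^{\pm 2\pi ix}$ piece acts as a modulation and shifts the frequency of $\partial_x p$ by $\pm 1$.

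After combining the three contributions and interchanging $\int_0^\infty dy$ with the resulting $\int_\R d\xi$, one arrives at
$$K_\bH(n) = -8\pi i \int_\R \tilde{N}(\xi)\, e^{2\pi in\xi}\, d\xi,$$
where $\tilde{N}(\xi) = \int_0^\infty \Phi(\xi, y)\, dy$ is an explicit $y$-integral built from $\coth(2\pi y)$, $1/\sinh(2\pi y)$, and exponentials of the form $e^{-2\pi(|\xi|+|\xi\pm 1|)y}$. Since $K_\bH$ is odd in $n$ (Remark~\ref{Properties-KH}), one may reduce to $\xi>0$ and handle the two cases $\xi>1$ and $0<\xi<1$ separately; the target identity then reads $8\pi\tilde N(\xi) = M(\xi)$.

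For $\xi > 1$, after the substitution $u = 2\pi y$, the algebraic identity $(\xi-1)e^u + (\xi+1)e^{-u} = 2\xi\cosh u - 2\sinh u$ forces the $\coth u$ dependence coming from the $\coth$ and $1/\sinh$ pieces to cancel exactly, collapsing the whole integral to $\int_0^\infty (1-\xi u)e^{-2\xi u}\,du = 1/(4\xi)$ and yielding $M(\xi) = 1/|\xi|$. For $0 < \xi < 1$, the analogous simplification using $\coth u = 1 + 2/(e^{2u}-1)$ and $1/(2\sinh u) = e^{-u}/(1-e^{-2u})$ to recast everything over the common denominator $e^{2u}-1$ reduces the integrand to a multiple of $(1-\xi u)(1-e^{-2\xi u})/(e^{2u}-1)$. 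After rescaling $t = 2u$ and separating the linear-in-$t$ part of the numerator, the classical representations
$$\int_0^\infty \frac{1-e^{-\xi t}}{e^t-1}\, dt = \psi(\xi+1)-\psi(1), \qquad \int_0^\infty \frac{t(1-e^{-\xi t})}{e^t-1}\, dt = \psi'(1) - \psi'(\xi+1)$$
assemble into the stated digamma formula for $M(\xi)$.

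The main obstacle I anticipate is justifying the interchange of integrals at the level of Fubini: each of the three $x$-integrated terms behaves individually like a multiple of $y^{-1}$ as $y \to 0^+$ (because $\coth(2\pi y)$ and $1/\sinh(2\pi y)$ both behave like $1/(2\pi y)$ there), so none is absolutely integrable in $y$ on its own; only in the combined sum does the singularity cancel. One therefore has to combine the three Plancherel contributions before bringing $\int_0^\infty dy$ inside, or else work through a regularization such as inserting $e^{-\varepsilon y}$ and passing to the limit $\varepsilon \to 0^+$. A convenient arithmetic sanity check at the end is continuity of $M$ at $|\xi|=1$: both branches should evaluate to $M(\pm 1)=1$, which provides an independent verification of the digamma computation in the $|\xi|<1$ regime.
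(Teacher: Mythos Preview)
Your proposal is correct and follows essentially the same route as the paper: compute the Fourier transform in $x$ of $\tfrac{1}{h}\,\partial_x p$ using the explicit three-frequency expression for $1/h$, pair it with the Fourier transform of $\partial_y(yp_n)$ via Plancherel, and then carry out the $y$-integral with the digamma representations. The paper handles your Fubini concern in exactly the way you suggest---by combining the three modulation pieces first (i.e.\ computing $\bigl(\tfrac{1}{h}\,\partial_x p\bigr)^{\wedge}(\xi)$ as a single well-behaved function of $y$) before switching the order of integration, so that no individual $y^{-1}$ singularity ever appears.
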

\begin{proof} 
We begin by observing that 
\begin{align*}
    \Big(\frac{\partial p}{\partial x}(\cdot,y)\Big)^\wedge(\xi)
    &= 2\pi i \xi e^{-2\pi |\xi|y},\\
    \Big(\frac{\partial }{\partial y}(yp_n(\cdot,y))\Big)^\wedge(\xi)
    &= (1-2\pi |\xi|y) e^{-2\pi i n\cdot \xi} e^{-2\pi |\xi|y}. 
\end{align*}
Then, 
\begin{align*}
    \left(\frac{1}{h}\frac{\partial p}{\partial x}\right)^\wedge(\xi)
    &= \coth(2\pi y)\left(\frac{\partial p}{\partial x}\right)^\wedge(\xi) - \frac{1}{\sinh(2\pi y)}\left(\cos(2\pi x)\frac{\partial p}{\partial x}\right)^\wedge(\xi)
\end{align*}
and
\begin{align*}
    \left(\cos(2\pi x)\frac{\partial p}{\partial x}\right)^\wedge(\xi)
    &= \left(\frac12(e^{2\pi i x}+e^{-2\pi i x})\frac{\partial p}{\partial x}\right)^\wedge(\xi)\\
    &= \pi i \left( (\xi-1)e^{-2\pi|\xi-1|y}+(\xi+1)e^{-2\pi|\xi+1|y} \right)\\
    &=\begin{cases}
        \ds 2\pi i \frac{\xi}{|\xi|}e^{-2\pi|\xi|y}(|\xi|\cosh(2\pi y)-\sinh(2\pi y)), & |\xi|\ge 1,\\[1.5em]
        \ds 2\pi i \frac{\xi}{|\xi|}e^{-2\pi y}(|\xi|\cosh(2\pi |\xi|y)-\sinh(2\pi|\xi|y)), & |\xi|<1.
    \end{cases}
\end{align*}
If $|\xi|\ge 1$, then
\begin{align*}
    \left(\frac{1}{h}\frac{\partial p}{\partial x}\right)^\wedge(\xi)
    =2\pi i \frac{\xi}{|\xi|}e^{-2\pi|\xi|y}.
\end{align*}
On the other hand, if $|\xi|<1$, then
\begin{align*}
    \left(\frac{1}{h}\frac{\partial p}{\partial x}\right)^\wedge(\xi)
    =2\pi i \frac{\xi}{|\xi|}e^{-2\pi|\xi|y}\left( |\xi|+(1-|\xi|)\left(\frac{e^{4\pi|\xi|y}-1}{e^{4\pi y}-1}\right) \right).
\end{align*} 
Thus by Plancherel's theorem, 
\begin{align*}
    \bK_{\bH}(n)&=-4\int_0^{\infty}\left(\int_{\R} \left(\frac{1}{h}\frac{\partial p}{\partial x}\right)^\wedge(\xi)\,\ol{\left(\frac{\partial }{\partial y}(yp_n(\cdot,y))\right)^\wedge(\xi)}\, d\xi \right) dy\\
    &=-8\pi i\int_{|\xi|\ge 1}\int_{0}^{\infty} \frac{\xi}{|\xi|} e^{-4\pi |\xi|y}(1-2\pi |\xi|y) e^{2\pi i n\cdot \xi} \,dy d\xi \\
    &\qquad -8\pi i\int_{|\xi|\le 1}\int_{0}^{\infty} \frac{\xi}{|\xi|} e^{-4\pi |\xi|y}(1-2\pi |\xi|y) e^{2\pi i n\cdot \xi} \left( |\xi|+(1-|\xi|)\left(\frac{e^{4\pi|\xi|y}-1}{e^{4\pi y}-1}\right) \right)\,dy d\xi\\
    &=-i\int_{|\xi|\ge 1}\frac{\xi}{|\xi|^2}  e^{2\pi i n\cdot \xi} \,d\xi-i\int_{|\xi|\le 1}\frac{\xi}{|\xi|}  e^{2\pi i n\cdot \xi} \,d\xi \\
    &\qquad- 2i\int_{|\xi|\le 1} \frac{\xi}{|\xi|} e^{2\pi i n\cdot \xi} \left( (1-|\xi|)\int_0^\infty \left(1-\frac{|\xi|y}{2}\right)\frac{e^{-y}-e^{-(1+|\xi|)y}}{1-e^{-y}}\,dy\right)\, d\xi.
\end{align*}
Since
\begin{align*}
    \int_0^\infty (1-\frac{|\xi|y}{2})\frac{e^{-y}-e^{-(1+|\xi|)y}}{1-e^{-y}}\,dy
    =(\psi(1+|\xi|)-\psi(1)) +\frac{|\xi|}{2}(\psi'(1+|\xi|)-\psi'(1)), 
\end{align*}
where $\psi$ is the digamma function, we obtain that 
\begin{align*}
    \bK_{\bH}(n) = -i\int_\R \frac{\xi}{|\xi|} M(\xi)e^{2\pi i n\xi}\, d\xi.
\end{align*} 
\end{proof}

\begin{lemma}\label{DPG-formula2} 
    For $\xi\in Q=[-{\tfrac12,\tfrac12})$ and a compactly supported function $f$ on $\Z$, the Fourier transform of the probabilistic discrete Hilbert transform $T_{\bH}(f)$ is given by 
\begin{align*}
    \cF(T_{\bH}(f))(\xi) = -i\frac{\xi}{|\xi|} \wt{M}(\xi)\cF(f)(\xi)
\end{align*} 
where
\begin{align*}
    \wt{M}(\xi) =1+(1-2|\xi|)(\psi(1+|\xi|)+\psi(1-|\xi|)-2\psi(1))+|\xi|(1-|\xi|)(\psi'(1+|\xi|)-\psi'(1-|\xi|)).
\end{align*}
\end{lemma}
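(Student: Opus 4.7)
The plan is to compute the Fourier multiplier $m(\xi)=\cF(K_{\bH})(\xi)$ of the convolution operator $T_{\bH}$, identify it with $-i\tfrac{\xi}{|\xi|}\tilde M(\xi)$, and then conclude via the standard Fourier convolution identity. Since $T_{\bH}f=K_{\bH}\ast f$ on $\Z$ for compactly supported $f$, and $K_{\bH}\in\ell^2(\Z)$ (it decays like $1/n$ by~\eqref{kerneld-1}), the multiplier $m$ is a well-defined function in $L^2(Q)$ and $\cF(T_{\bH}f)(\xi)=m(\xi)\cF(f)(\xi)$. By the symmetry $K_{\bH}(-n)=-K_{\bH}(n)$ recorded in Remark~\ref{Properties-KH}, the multiplier $m$ is odd, so it suffices to work on $\xi\in(0,\tfrac12)$ and extend by oddness.

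Writing the Fourier representation of Lemma~\ref{DPG-forlula1} as $K_{\bH}(n)=\int_{\R}F(\eta)e^{2\pi in\eta}\,d\eta$ with $F(\eta)=-i\tfrac{\eta}{|\eta|}M(\eta)$, the Poisson summation formula formally gives
\begin{equation*}
    m(\xi)=\sum_{k\in\Z}F(\xi+k).
\end{equation*}
For $\xi\in(0,\tfrac12)$, the only $k$ with $|\xi+k|<1$ are $k=0$ and $k=-1$, which contribute $-iM(\xi)$ and $+iM(1-\xi)$ (using $\mathrm{sgn}(\xi-1)=-1$). For every other $k$ one has $M(\xi+k)=1/|\xi+k|$, and the remaining tail collapses (via symmetric summation) into the conditionally convergent sum
\begin{equation*}
    -i\sum_{k=1}^{\infty}\Bigl(\tfrac{1}{k+\xi}-\tfrac{1}{k+1-\xi}\Bigr)=-i\bigl(\psi(2-\xi)-\psi(1+\xi)\bigr)=-i\Bigl(\psi(1-\xi)-\psi(1+\xi)+\tfrac{1}{1-\xi}\Bigr),
\end{equation*}
where the first equality is the Gauss series $\psi(1+a)-\psi(1+b)=\sum_{k=1}^{\infty}(\tfrac{1}{k+b}-\tfrac{1}{k+a})$ and the second uses $\psi(2-\xi)=\psi(1-\xi)+\tfrac{1}{1-\xi}$.

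Substituting the explicit formula for $M$ from Lemma~\ref{DPG-forlula1} into $M(\xi)-M(1-\xi)$, and applying the functional equations $\psi(2-\xi)=\psi(1-\xi)+\tfrac{1}{1-\xi}$ and $\psi'(2-\xi)=\psi'(1-\xi)-\tfrac{1}{(1-\xi)^2}$ to rewrite every digamma/trigamma value in terms of $\psi(1\pm\xi)$ and $\psi'(1\pm\xi)$, the quantity $m(\xi)/(-i)$ collapses after collecting coefficients of $\psi(1+\xi)-\psi(1)$, $\psi(1-\xi)-\psi(1)$, $\psi'(1+\xi)-\psi'(1)$ and $\psi'(1-\xi)-\psi'(1)$ to
\begin{equation*}
    1+(1-2\xi)\bigl(\psi(1+\xi)+\psi(1-\xi)-2\psi(1)\bigr)+\xi(1-\xi)\bigl(\psi'(1+\xi)-\psi'(1-\xi)\bigr)=\tilde M(\xi).
\end{equation*}
This establishes the claim on $(0,\tfrac12)$, and the full statement follows by the oddness of $m$.

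The main obstacle is the rigorous justification of Poisson summation, since neither $K_{\bH}\in\ell^1(\Z)$ nor $F\in L^1(\R)$: both decay only like $1/|\cdot|$ at infinity. The cleanest fix is to truncate $F$ to $|\eta|\le R$, apply standard Poisson summation to the resulting integrable function $F_R$ (whose inverse Fourier transform $K_R$ is rapidly decreasing), and pass to the limit $R\to\infty$. On the spectral side the truncated partial sums $\sum_{|k+\xi|\le R}F(\xi+k)$ converge by the symmetric cancellation of the odd tails; on the physical side the corresponding Fourier partial sums converge to $m$ in $L^2(Q)$ because $K_{\bH}-K_R\to 0$ in $\ell^2(\Z)$. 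The subsequent algebraic simplification is routine bookkeeping once the $\psi$, $\psi'$ functional equations are applied systematically.
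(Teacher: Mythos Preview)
Your proof is correct and follows essentially the same route as the paper: apply Poisson summation to pass from the representation $K_{\bH}(n)=\int_\R F(\eta)e^{2\pi in\eta}\,d\eta$ of Lemma~\ref{DPG-forlula1} to the periodized multiplier $\sum_{k\in\Z}F(\xi+k)$, then evaluate the sum using the series and recurrence identities for $\psi$. The paper cites a distributional Poisson summation theorem (Friedlander) where you argue by truncation and $L^2$ convergence, and the paper groups the sum as $n=0$, $n=\pm1$, $|n|\ge 2$ where you group it as $k\in\{0,-1\}$ versus the rest; these are cosmetic differences and the algebra collapses to $\wt M(\xi)$ in the same way.
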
 
\begin{proof}
By Lemma~\ref{DPG-forlula1}, we have 
\begin{align*}
    \bK_{\bH}(n) 
    = -i\int_\R \frac{\xi}{|\xi|} M(\xi)e^{2\pi i n\xi}\, d\xi
    = -i\sum_{m\in \Z}\int_Q \frac{\xi+m}{|\xi+m|} M(\xi+m)e^{2\pi i n\xi}\, d\xi.
\end{align*}
Thus, it suffices to show that 
\begin{align*}
    \sum_{n\in\Z}\frac{\xi+n}{|\xi+n|} M(\xi+n) = \frac{\xi}{|\xi|} \wt{M}(\xi),
\end{align*} 
for $\xi\in[0,\tfrac12]$. 
By the series representation for the digamma function
\begin{align*}
    \psi(1+z) = -\upgamma+\sum_{m=1}^\infty\left(\frac{1}{m}-\frac{1}{m+z}\right),
\end{align*}
we have
\begin{align*}
    \sum_{|n|\ge 2}\frac{\xi+n}{|\xi+n|} M(\xi+n)
    &=\sum_{m=2}^\infty\left(\frac{1}{\xi+m}+\frac{1}{\xi-m}\right)\\
    &= \sum_{m=1}^\infty\left(\frac{1}{m+\xi}-\frac{1}{m}+\frac{1}{m}-\frac{1}{m-\xi}\right)+\frac{2\xi}{1-\xi^2}\\
    &= \psi(1-\xi)-\psi(1+\xi) +\frac{2\xi}{1-\xi^2}.
\end{align*}
Using the recurrence relation $\psi(1+z)=\psi(z)+\frac{1}{z}$, we have
\begin{align*}
    &\frac{\xi+1}{|\xi+1|} M(\xi+1)+ \frac{\xi-1}{|\xi-1|} M(\xi-1)\\
    &=\frac{1}{\xi+1}-\left(1+2\xi(\psi(2-\xi)-\psi(1))+\xi(1-\xi)(\psi'(2-\xi)-\psi'(1))\right)\\
    &=1-\frac{2\xi}{1-\xi^2}-\left(1+2\xi(\psi(1-\xi)-\psi(1))+\xi(1-\xi)(\psi'(1-\xi)-\psi'(1))\right).
\end{align*}
Therefore, for $\xi\in[0, \tfrac12]$, we have
\begin{align*}
    \sum_{n\in\Z}\frac{\xi+n}{|\xi+n|} M(\xi+n) 
    &= M(\xi) +\sum_{|n|\ge 2}\frac{\xi+n}{|\xi+n|} M(\xi+n) +\frac{\xi+1}{|\xi+1|} M(\xi+1)+ \frac{\xi-1}{|\xi-1|} M(\xi-1)\\
    &= 1 +(1-\xi) \left(2(\psi(1+\xi)-\psi(1)\right) +\xi(\psi'(1+\xi)-\psi'(1))\\
    &\quad + \psi(1-\xi)-\psi(1+\xi) +\frac{2\xi}{1-\xi^2}+ 1-\frac{2\xi}{1-\xi^2}\\
    &\quad -\left(1+2\xi(\psi(1-\xi)-\psi(1))+\xi(1-\xi)(\psi'(1-\xi)-\psi'(1))\right)\\
    &=\wt{M}(\xi),
\end{align*}
which completes the proof.

\end{proof}

{
\begin{lemma}\label{lem:gconvex}
Let $g(\xi) = \frac{1-2\xi}{\wt{M}(\xi)}$ on $\xi\in[0,\frac12]$. Then, $g$ is strictly positive, decreasing, and convex on $[0,\frac12]$ with $g(0)=1$, $g'(0)=-2$, $g'(\frac12)=0$.
\end{lemma}
\begin{proof}
First, we claim that $\wt{M}(\xi)\ge0$. Since $\wt{M}(0)=1$, and $\wt{M}(1/2)=0$, it suffices to show that  $\wt{M}(\xi)$ is decreasing in $[0,\tfrac12]$. Let $\ph(x):=\psi(1+x)+\psi(1-x)-2\psi(1)$ where $\psi$ is the digamma function. Then, $\wt{M}$ can be written as
\begin{align*}
    \wt{M}(\xi) = 1+(1-2|\xi|)\ph(|\xi|)+|\xi|(1-|\xi|)\ph'(|\xi|).
\end{align*}
By the definition of $\psi$, $\ph(x)$ has the integral representation
\begin{align*}
    \ph(x) = 2\int_0^\infty \frac{1-\cosh(xy)}{e^y - 1}\, dy.
\end{align*}
It follows from this that $\frac{d^n}{dx^n}\ph(x)\le 0$ for all $n=0,1,2,\cdots$ and $x\in[0,\tfrac12]$. Then,
\begin{align*}
    \ph(x) &= -2x^2\sum_{n=1}^\infty \frac{1}{n(n^2-x^2)},\qquad
    \ph'(x) = -4x\sum_{n=1}^\infty \frac{n}{(n^2-x^2)^2}\\
    \ph''(x) &= -4\sum_{n=1}^\infty \frac{n(n^2+3x^2)}{(n^2-x^2)^3}.
\end{align*}
Thus,
\begin{align*}
    \wt{M}'(x)
    &= -2\ph(x)+2(1-2x)\ph'(x)+x(1-x)\ph''(x)\\
    &=-4\sum_{n=1}^\infty\frac{1}{n(n^2-x^2)^3}\left(
    -x^2(n^2-x^2)^2+2(1-2x)n^2(n^2-x^2)+x(1-x)n^2(n^2+3x^2)
\right).
\end{align*}
The numerator in the summand can be written as
\begin{align*}
    &-x^2(n^2-x^2)^2+2(1-2x)n^2(n^2-x^2)+x(1-x)n^2(n^2+3x^2)\\
    &\qquad = (1-2x)(2+x)n^4-x^2(x^2-7x+2)n^2-x^6\\
    &\qquad = (1-2x)(2+x)(n^4-n^2)+((1-2x)(2+x)-x^2(x^2-7x+2)-x^6)n^2+x^6(n^2-1)\\
    &\qquad = (1-2x)(2+x)(n^4-n^2)-(x-1)^2(x^4+2x^3+4x^2-x-2)n^2+x^6(n^2-1).
\end{align*}
Since $x^4+2x^3+4x^2-x-2\le 0$ for all $x\in[0, \tfrac12]$, we conclude that $\wt{M}'(x)\le 0$ for $x\in[0, \tfrac12]$ and so $\wt{M}(x)\ge 0$ on $[0,\frac12]$. Since
\[
    g(1/2) = \lim_{x\to \frac12 -}\frac{1-2x}{\wt{M}(x)}
    = -\lim_{x\to \frac12 -}\frac{2}{\wt{M}'(x)} 
    =\frac{1}{\frac72\zeta(3)-4\log 2}>0,
\]
$g$ is strictly positive.

Let $u(x):=\wt{M}(x)/(1-2|x|)$. We claim that $u$ is increasing and concave on $[0,1/2]$. Denote $\ph(x) = \psi(1 + x) + \psi(1 - x) - 2 \psi(1)$. Since
\begin{align*}
    u(x)
    &=\frac{1}{1-2x}+\ph(x)+\frac{x(1-x)}{1-2x}\ph'(x)\\
    &=\frac{1}{4(1-2x)}(4+\ph'(x))+\ph(x)-\frac{1-2x}{4}\ph'(x),\\
    u'(x)
    &=\frac{1}{2(1-2x)^2}(4+\ph'(x))+\frac{\ph''(x)}{4(1-2x)}+\frac32\ph'(x)-\frac{1-2x}{4}\ph''(x),
\end{align*}
it follows from L'hospital's rule and the recurrence property of polygamma functions that
\begin{align*}
    u'(1/2) = -\frac{1}{16}\ph'''(1/2)+\frac32\ph'(1/2)=0.
\end{align*}
Thus, it suffices to prove that $u''(x)\le 0$ for $x\in[0, \tfrac12]$. Let
\begin{align*}
    v(x)
    &:=(1-2x)^3 u''(x)\\
    &= 2(4+\ph'(x))+(1-2x)(1+2(1-2x)^2)\ph''(x)+x(1-x)(1-2x)^2\ph'''(x).
\end{align*}
Note that 
\begin{align*}
    v'(x) =(1-2x)^2(-12\ph^{(2)}(x)+4(1-2x)\ph^{(3)}(x)+x(1-x)\ph^{(4)}(x)).
\end{align*}
Using the series representation for $\ph(x)$ (see the remark above), we get
\begin{align*}
    &-12\ph^{(2)}(x)+4(1-2x)\ph^{(3)}(x)+x(1-x)\ph^{(4)}(x)\\
    &\qquad = \sum_{n=1}^\infty\frac{n}{(n^2-x^2)^5}(n^6-5x(1-2x)n^4-5x^3(2-x)n^2-x^5)\\
    &\qquad = \sum_{n=1}^\infty\frac{n}{(n^2-x^2)^5}((n^6-n^4)+(1-x)^5n^4+5x^3(2-x)(n^4-n^2)+(n^4-1)x^5)\ge 0
\end{align*}
for all $x\in[0,\tfrac12]$. Since $v'(x)\ge 0$ and $v(1/2)=0$, we obtain that $v(x)\le 0$ and so $u''(x)\le 0$ for $x\in[0,\tfrac12]$ as desired.

\end{proof}

\begin{remark}
    From Lemma~\ref{lem:gconvex}, we see that $\|\wt{M}\|_\infty=1$. This implies that $T_\bH$ is bounded in $\ell^2(\Z)$ and its norm is 1, as we already know. 

\end{remark}

\begin{proof}[Proof of Theorem~\ref{BanKwaMain}]
    The proof is based on P\'olya's criterion for discrete random variables (see for example~\cite{Bill}*{p.353}). 
The Fourier multiplier for the classical discrete Hilbert transform $H_\dis$ with kernel $K_{{H}_{\dis}}(n)=\frac{1}{\pi n}$ is 
\begin{align*}
    \cF(K_{{H}_{\dis}})(\xi)
    = \sum_{n\in\Z\setminus\{0\}} \frac{1}{\pi n}e^{-2\pi i n\xi} 
    = -\sum_{n\in\Z\setminus\{0\}} \frac{2i}{\pi n}\sin(2\pi n\xi) 
    = -i\frac{\xi}{|\xi|}(1-2|\xi|)
\end{align*}
for $\xi\in[-\frac12, -\frac12)$. Thus, we have 
\begin{align*}
    \cF(K_{{H}_{\dis}})(\xi)
    = -i\frac{\xi}{|\xi|}(1-2|\xi|)
    = \cF(\bK_{\bH})(\xi)g(\xi),\qquad g(\xi)=\frac{1-2|\xi|}{\wt{M}(\xi)}.
\end{align*}
Since we know $g(\xi)$ is strictly positive, convex, decreasing on $[0,\frac12]$ and symmetric on $Q$ by Lemma~\ref{lem:gconvex}, there exists a sequence of continuous, symmetric, piecewise linear functions $g_j(\xi)$ on $Q$ converging to $g(\xi)$ pointwise. In particular, for each $j$, there exist $0=t_1<t_2<\cdots<t_k=\frac12$ such that $g_j(0)=1$, $g_j(\frac12)=g(\frac12)=:c$, and $g_j(\xi)=a_i-s_i \xi$ for $\xi\in[t_{i-1}, t_i]$, for some $a_i, s_i\ge 0$, $i=1,2,\cdots,k$. Let $\eta(\xi) = \max\{0, 1-|\xi|\}$, $\eta_t(\xi):=\eta(\xi/t)$, and $\eta_{\infty}(\xi)=1$ on $Q$. 
Since $g_j$ is continuous and $g(0)=1, g(\frac12)=c$,
\[
1
=\sum_{i=1}^k s_i(t_i-t_{i-1})+c
=\sum_{i=1}^k t_i(s_i-s_{i+1})+c
\]
by letting $s_{k+1}=0$. One can see that 
\[
    g_j(\xi) = \sum_{i=1}^k t_i (s_i-s_{i+1})\eta_{t_i}(\xi)+c\eta_\infty(\xi).
\]
Since $\eta_t, \eta_\infty$ for $0<t\le \frac12$ are the characteristic functions of discrete random variables and $g_j$ is the convex combinations of them, $g_j$ is also the characteristic function of a discrete random variable $X_j$ on $\Z$.  Since $g_j$ converges to $g$ pointwise and $g$ is continuous at $0$, we see that $g$ is also the characteristic function of a random variable $X$.
Note that 
\begin{align*}
    i\frac{\xi}{|\xi|}=\cF(\cU)(\xi),\qquad
    \cU(n) = \begin{cases}
        -\frac{1}{2\pi n}, & n\text{ is odd, }\\
        0, & n\text{ is even. }
    \end{cases}
\end{align*}
We know that $\cF(\bK_{\bH})(\xi)=-i\frac{\xi}{|\xi|}\wt{M}(\xi)$, and $\bK_{\bH}(n)$ is decreasing in $n$ with $|\bK_{\bH}(n)|\le \frac{C}{n}$ for some $C>0$. Thus, the convolution $\bK_{\bH}\ast \cU(n)$ is well-defined and the Fourier transform is $\wt{M}$. Since $\wt{M}(0)=1$, we obtain that $\wt{M}(\xi)$ is the characteristic function of a discrete random variable, say $Y$, on $\Z$. Assume $X$ and $Y$ are independent, then the characteristic function of the sum $X+Y$ is $1-2|\xi|$, which is the characteristic function of a discrete random variable. So, we conclude that $X$ is also discrete, supported on $\Z$. That is, there exists a probability kernel $\cP$ such that $\cP(n)\ge 0$ for all $n\in\Z$,  $\sum_{n\in\Z}\cP(n)=1$, and $g(\xi) = \cF(\cP)(\xi)$ on $Q$.
\end{proof}
}

\begin{figure}
\includegraphics[width=10cm]{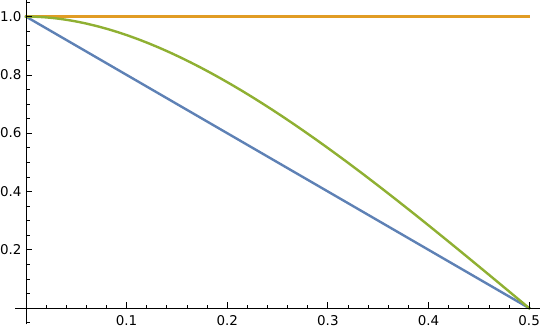}
\caption{Comparison between the Fourier multipliers for the classical Hilbert transform (orange), the discrete Hilbert transform (blue), and the probabilistic discrete Hilbert transform (green).}
\label{fig:fm}
\end{figure}

\begin{question}\label{convZd} 
Does Theorem \ref{BanKwaMain} hold for $d>1$? More precisely, is there a probability kernel $\cP^{(k)}$ on $\Z^d$ such that 
$$
R_{\dis}^{(k)} f(n) =\sum_{m\in \Z^d} \cP^{(k)}(n-m) T_{\bH^{(k)}}f(m) ?    
$$ 
\end{question}

\section{Probabilistic continuous Riesz transforms}\label{ProbContRiesz}

Given that discrete operators obtained from Calder\'on--Zygmund kernels as defined in \eqref{eq:ConK-def} are simply defined by replacing the continuous variable $z$ by the discrete variable $n$ and avoiding the singularity at $\{0\}$ in the sum  are bounded on $\ell^p$ (Propostion \ref{thm:RCZ}),  it is natural to ask if the opposite is also true in the current situation. More precisely, is it true that the kernels obtained from $\bK_{\bH^{(k)}}$ simply by replacing $n \in \Z^d$ with $z\in \R^d$, $|z|\geq 1$, together with the some modification for $|z|<1$,  are Calder\'on--Zygmund  kernels satisfying \eqref{eq:ConK-def}?  In this section we give a formula for such continuous kernels that satisfy \eqref{eq:ConK-def}, with the exception of the $C^1$ property on the sphere $|z|=1$, and are also bounded on $L^p(\R^d)$, $1<p<\infty$ with rather precise norm bounds.   Since we are able to find various explicit constants for the case $d=1$, we consider the cases $d=1$ and $d>1$ separately. 

From formula \eqref{kerneld-1} a natural version of a continuous kernel which gives the probabilistic discrete Hilbert transform \`a la Calder\'on--Zygmund would be
\begin{align}\label{J-0}
\bK_{\bH}(z)=\frac{1}{\pi z}\left(1+\int_0^\infty\frac{2y^3}{(y^2+\pi^2 z^2)\sinh^2(y)}\, dy\right)\,\ind_{\{|z|\geq 1\}}(z)+\frac{1}{\pi z}\ind_{\{|z|<1\}}(z),\quad z\in \R.
\end{align}
Similarly, for $d>1$  from \eqref{byparts} a natural definition of the version of a continuous kernel which gives the probabilistic discrete Riesz transforms for $k=1, 2, \dots, d$ would be 

\begin{align}\label{d>1}
    \bK_{\bH^{(k)}}(z):
	&=\left(-4\int_{\R^{d}}\int_{0}^{\infty}\frac{1}{h(x, y)}\parder{{p_0} }{x_k}\parder{}{y}(yp_z)\,dydx\right)\ind_{\{|z|\geq 1\}}(z)+c_d\frac{z_k}{|z|^{d+1}}\ind_{\{|z|<1\}}(z)\\
&=\left(I_1^{(k)}(z)+I_2^{(k)}(z)\right)\ind_{\{|z|\geq 1\}}(z)+c_d\frac{z_k}{|z|^{d+1}}\ind_{\{|z|<1\}}(z),\nonumber
\end{align}
where
\begin{align*}
	I_1^{(k)}(z)=\int_{\R^d}\int_0^{\infty} \frac{8c_d^2(d+1)x_k y^2}{h(x,y)(|x|^2+y^2)^{\frac{d+3}{2}}(|x-z|^2+y^2)^{\frac{d+1}{2}}}\, dxdy,
	\end{align*} 
	and 
	\begin{align*}
	I_2^{(k)}(z)=-\int_{\R^d}\int_0^{\infty} \frac{4c_d^2(d+1)^2 x_k y^4}{h(x,y)(|x|^2+y^2)^{\frac{d+3}{2}}(|x-z|^2+y^2)^{\frac{d+3}{2}}}\, dxdy 
\end{align*}
and $c_d=\Gamma(\frac{d+1}{2})\pi^{-\frac{d+1}{2}}$. 

Notice that $\bK_{\bH^{(k)}}$ is not continuous on  $|z|=1$ and hence not a Calder\'on--Zygmund kernel requiring \eqref{eq:ConK-def}.  Nevertheless, with these definitions we have 
\begin{theorem}\label{almostCZ} For any $d\geq 1$ and  $k=1, \dots d$,  the kernels 
$\bK_{\bH^{(k)}}(z)$ ($\bK_{\bH}(z)$ when $d=1$) satisfy 
\begin{itemize} 
\item[(i)] 
\begin{align}\label{supK}
|\bK_{\bH^{(k)}}(z)|\leq \frac{C_d}{|z|^d}, \quad z\in \R^d\setminus\{0\}.
\end{align} 
\item[(ii)] 
\begin{align}\label{supgradK}
|\nabla\bK_{\bH^{(k)}}(z)|\leq \frac{C_d}{|z|^{d+1}},\quad z\in \R^d\setminus\{0\},  \quad |z|\ne 1 ,
\end{align} 
where $C_d$ depends only on $d$. Furthermore,  
\item[(iii)] For $d=1$, 
\begin{align}\label{Fourier-d}
\sup_{\xi\in \R}|\wh{\bK_{\bH}}(\xi)|
\le 1+\frac{2}{\pi}\int_0^{\infty}\frac{y\ln(y^2/\pi^2+1)}{\sinh^2(y)} dy 
\approx 1.09956.
\end{align} 
\item[(iv)] For $d>1$ and all $k=1, \dots d$, we have 
\begin{align}\label{Fourier-d>1}
|\wh{\bK_{\bH^{(k)}}}(\xi)|\leq C_d, \quad \text{for all}\quad  \xi\in \R^d\setminus\{0\}
\end{align} 
\end{itemize} 

\end{theorem}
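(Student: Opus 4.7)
The plan is to reduce (i) and (ii) to scaling estimates, and to deduce (iii) and (iv) by bounding the Fourier transform of a suitable corrector in $L^1$; the bulk of the technical work will be in (iv).

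For (i), I separate the cases $|z|<1$ and $|z|\geq 1$. When $|z|<1$ the kernel equals $c_d z_k/|z|^{d+1}$ and the bound is immediate. For $|z|\geq 1$, the essential observation is the homogeneity $U_z(|z|\tilde x,|z|\tilde y) = |z|^{-(2d+1)}U_{z/|z|}(\tilde x,\tilde y)$, which after the change of variables $(x,y)\mapsto(|z|\tilde x,|z|\tilde y)$ yields
\begin{equation*}
\bK_{\bH^{(k)}}(z) = \frac{1}{|z|^d}\int_{\R^d}\int_0^\infty \frac{U_{z/|z|}(\tilde x,\tilde y)}{h(|z|\tilde x,|z|\tilde y)}\,d\tilde y\,d\tilde x.
\end{equation*}
The lower bound \eqref{eq:hupperbound2} gives $1/h(|z|\tilde x,|z|\tilde y)\leq C\sqrt{\tilde x^2+\tilde y^2}/\tilde y$ (the factor $|z|$ cancels), and combined with the explicit form of $U_\omega$ the integrand is dominated by a function in $L^1(\R^d\times\R_+)$ whose $L^1$-norm depends only on $d$---essentially the integrable estimate \eqref{eq:Unbound} used in the proof of Lemma~\ref{lem:DiscreteAsymptotics}. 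For (ii), I would differentiate under the integral sign, valid for $|z|>1$ (away from the discontinuity of the indicator at $|z|=1$); the gradient of $U_z$ in $z$ brings an extra factor $(z-x)/(|x-z|^2+y^2)$, producing the additional decay $|z|^{-1}$ after the same rescaling. For $|z|<1$, direct differentiation of $c_d z_k/|z|^{d+1}$ yields $|\nabla\bK_{\bH^{(k)}}(z)|\leq C_d/|z|^{d+1}$.

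For (iii), I decompose $\bK_{\bH}(z) = \frac{1}{\pi z} + F(z)$ with $F(z) = \frac{G(z)}{\pi z}\ind_{\{|z|\geq 1\}}(z)$ and $G(z) = \int_0^\infty 2y^3/[(y^2+\pi^2 z^2)\sinh^2 y]\,dy$. Since the Fourier transform of $1/(\pi z)$ is $-i\sgn(\xi)$, of modulus~$1$, one has $|\wh{\bK_{\bH}}(\xi)|\leq 1 + |\wh F(\xi)|\leq 1 + \|F\|_{L^1}$. By Fubini, the partial fraction decomposition $\frac{1}{z(y^2+\pi^2 z^2)} = \frac{1}{y^2 z} - \frac{\pi^2 z}{y^2(y^2+\pi^2 z^2)}$, and the elementary evaluation $\int_1^\infty dz/[z(y^2+\pi^2 z^2)] = (2y^2)^{-1}\ln(y^2/\pi^2+1)$, one obtains
\begin{equation*}
\|F\|_{L^1} = \frac{2}{\pi}\int_0^\infty \frac{y\ln(y^2/\pi^2+1)}{\sinh^2 y}\,dy,
\end{equation*}
and the numerical value is computed by direct quadrature.

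For (iv), I write $\bK_{\bH^{(k)}} = K^{(k)} + R$ with $K^{(k)}(z) = c_d z_k/|z|^{d+1}$, $R(z)=0$ for $|z|<1$, and $R(z) = \int\int(1/h - 1)U_z\,dx\,dy$ for $|z|\geq 1$. Since $\wh{K^{(k)}}(\xi) = -i\xi_k/|\xi|$ has modulus~$1$, it suffices to show $R\in L^1(\R^d)$. Using the scaled representation from (i), I split the $(\tilde x,\tilde y)$-integration at $\tilde y = \log|z|/|z|$ (for $|z|\geq 2$). On $\{\tilde y\geq \log|z|/|z|\}$, the Fourier expansion $h - 1 = \sum_{n\neq 0}e^{-2\pi|n|y}e^{2\pi in\cdot x}$ yields $|1/h(|z|\tilde x,|z|\tilde y) - 1|\leq Ce^{-2\pi|z|\tilde y}\leq C|z|^{-2\pi}$, contributing $O(|z|^{-d-2\pi})$. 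On $\{\tilde y<\log|z|/|z|\}$, the bound $|1/h - 1|\leq C\sqrt{\tilde x^2+\tilde y^2}/\tilde y$ from \eqref{eq:hupperbound2} combined with the pointwise estimate $|U_\omega|\sqrt{\tilde x^2+\tilde y^2}/\tilde y\leq C\tilde y/[(|\tilde x|^2+\tilde y^2)^{(d+1)/2}(|\tilde x-\omega|^2+\tilde y^2)^{(d+1)/2}]$ from the proof of Lemma~\ref{lem:DiscreteAsymptotics} (together with the analogous estimate coming from the $T$-part of $U_\omega$) makes the inner $\tilde x$-integral bounded uniformly in $\tilde y$, so integration over $\tilde y\in(0,\log|z|/|z|)$ contributes at most $C\log|z|/|z|$. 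Combining, $|R(z)|\leq C_d\log|z|/|z|^{d+1}$, which is integrable on $\R^d$, so $|\wh{\bK_{\bH^{(k)}}}(\xi)|\leq 1 + \|R\|_{L^1}\leq C_d$. The main obstacle is precisely this $L^1$-estimate: a naive absolute-value bound on the inner integral diverges because $1/h(x,y)$ can be of order $1/y$ near $y=0$, so it is essential to trade the exponential decay of $h-1$ for $|z|\tilde y$ large against the smallness of the $\tilde y$-slice in the complementary region.
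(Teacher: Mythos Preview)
Your treatment of (i), (ii), and (iii) is essentially the paper's argument: the scaling $U_z(|z|\tilde x,|z|\tilde y)=|z|^{-(2d+1)}U_{z/|z|}(\tilde x,\tilde y)$ together with the lower bound~\eqref{eq:hupperbound2} on $h$ is exactly how the paper handles (i) and (ii) for $d>1$, and the decomposition $\bK_{\bH}=\frac{1}{\pi z}+F$ with $\|F\|_{L^1}$ computed via Fubini is the paper's proof of (iii).

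For (iv) you take a genuinely different route. The paper also writes $\bK_{\bH^{(k)}}=K^{(k)}+R$ with $R$ supported on $\{|z|\ge 1\}$, but it proves $R\in L^1(\R^d)$ by working in the \emph{unscaled} variables $(x,y)$ and integrating in $z$ first: splitting the $(x,y)$-domain into the four pieces $\{y\lessgtr 1\}\times\{|x|\lessgtr\tfrac12\}$, it uses only the crude bounds $|1/h-1|\le C\sqrt{|x|^2+y^2}/y$ (from~\eqref{eq:hupperbound2}) and $|1/h-1|\le C/y$ for $y\ge 1$ (from Lemma~\ref{lem:hlimit}), together with the Poisson normalisation $\int_{\R^d}y(|x-z|^2+y^2)^{-(d+1)/2}\,dz\le C$. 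This yields $\|R\|_{L^1}\le C_d$ without any pointwise information on $R(z)$. Your argument instead stays in the scaled variables from (i), splits at $\tilde y=\log|z|/|z|$, and exploits the \emph{exponential} decay $|h(x,y)-1|\le C_d e^{-2\pi y}$ for $y$ bounded below (from the Fourier series~\eqref{FourierInv}) on the outer piece. This buys you the pointwise estimate $|R(z)|\le C_d\log|z|/|z|^{d+1}$ for $|z|\ge 2$, which is strictly more than the paper obtains and makes the $L^1$ conclusion immediate. The price is that you need the exponential decay of $h-1$, whereas the paper gets by with the softer $O(1/y)$ bound. One small omission: your pointwise bound is stated only for $|z|\ge 2$; you should note that on the compact annulus $1\le|z|<2$ the kernel $R$ is bounded by part~(i), so integrability there is trivial.
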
 

\begin{proof} 
We first show the case $d=1$ which is computationally much simpler  and will give explicit constants, particularly the bound for the Fourier transform. 
Clearly $|\bK_{\bH}(z)|=\frac{1}{\pi |z|}$,  for $0<|z|<1$. On the other hand, 
\begin{align}\label{zeta} 
    \int_0^\infty\frac{y^3}{(y^2+ \pi^2 z^2)\sinh^2(y)}\, dy
    &\le \frac{1}{\pi^2z^2}\int_0^\infty\frac{y^3}{\sinh^2(y)}\, dy\nonumber\\
    &= \frac{4}{\pi^2z^2}\int_0^\infty\frac{y^3}{e^{2y}(1-e^{-2y})^2}\, dy\nonumber\\
    &= \frac{4}{\pi^2z^2}\sum_{k=1}^{\infty} k\int_0^{\infty} y^3 e^{-2ky}\, dy\nonumber\\
    &= \frac{3}{2\pi^2z^2}\sum_{k=1}^{\infty}\frac{1}{k^3}= \frac{3}{2\pi^2z^2}\zeta(3). 
\end{align}
From this it follows that  
$|\bK_{\bH}(z)|\leq \frac{C_1}{|z|}$,  for all $|z|>0$. 
Similarly, for  $|z|\geq 1$,
\begin{align*}
    |\bK_{\bH}'(z)|
    &=\left|-\frac{1}{\pi z^2}\left(1+\int_0^\infty\frac{2y^3}{(y^2+\pi^2 z^2)\sinh^2(y)}\, dy\right) -\int_0^\infty\frac{4 y^3}{(y^2+\pi^2 z^2)^2\sinh^2(y)}\, dy\right|\\
    &\leq \frac{C_2}{|z|^{2}}, 
\end{align*} 
and again we have $|\bK_{\bH}'(z)|\leq \frac{C}{|z|^2}$  for all $|z|>0$,  $|z|\neq 1$.  Here, $C$ is a universal constant.  Thus $\bK_{\bH}$ satisfies (i) and (ii).   

In addition, we have, in the principal value sense, 
\begin{align}\label{eq:Khat}
    \wh{\bK}_{\bH}(\xi)
    &=\int_{\bR} \bK_{\bH}(z)e^{-2\pi iz\cdot \xi} dz\nonumber\\
    &=\int_{\bR}\frac{1}{\pi z}e^{-2\pi iz\cdot\xi} dz +\frac{1}{\pi} \int_0^\infty\frac{2y^3}{\sinh^2(y)}\, \left(\int_{\{|z|>1\}}\frac{e^{-2\pi iz\cdot \xi}}{z(y^2+\pi^2 z^2)}\, dz\right)\,dy\\
    &=-i\sign(\xi)+\frac{1}{\pi} \int_0^\infty\frac{2y^3}{\sinh^2(y)}\, \left(\int_{\{|z|>1\}}\frac{e^{-2\pi i z\cdot \xi}}{z(y^2+\pi^2 z^2)}\, dz\right)\,dy\nonumber\\
    &=-i\sign(\xi)\left(1+\frac{1}{\pi} \int_0^\infty\frac{2y^3}{\sinh^2(y)}\, 
    \left(\int_{\{|z|>1\}}\frac{\sin(2\pi z|\xi|)}{z(y^2+\pi^2 z^2)}\, dz\right)\,dy\right)\nonumber. 
\end{align} 
By Fubini and the following identity obtained by integration by parts, $$
\int_1^{\infty}\frac{1}{z(y^2+\pi^2 z^2)} dz=\frac{\ln(y^2/\pi^2+1)}{2y^2},
$$ 
we have
\begin{align}\label{eq:JL1}
    \Big|\frac{1}{\pi} \int_0^\infty\frac{y^3}{\sinh^2(y)}\, \left(\int_{\{|z|>1\}}\frac{e^{-2\pi iz\cdot \xi}}{z(y^2+\pi^2 z^2)}\, dz\right)\,dy\Big|
    &\leq
    \frac{2}{\pi} \int_0^\infty\frac{y^3}{\sinh^2(y)}\, \left(\int_1^\infty\frac{1}{z(y^2+\pi^2 z^2)}\, dz\right)\,dy\nonumber\\
    &=\frac{1}{\pi}\int_0^{\infty}\frac{y\ln(y^2/\pi^2+1)}{\sinh^2(y)} dy\nonumber\\
    &\approx 0.0497822.
\end{align}
Hence for all $\xi\in \R$, 
$$|\wh{\bK}_{\bH}(\xi)|
\le 1+\frac{2}{\pi}\int_0^{\infty}\frac{y\ln(y^2/\pi^2+1)}{\sinh^2(y)} dy
\approx 1.09956,
$$
which is the claim in (iii).   

We now suppose $d>1$. By \eqref{eq:Unbound}, \eqref{eq:hupperbound2} and a change of variables we have, for $|z|\geq 1$ and $z=|z|\theta$, that 
\begin{align*}
    |\bK_{\bH^{(k)}}(z)|
    &\leq C_d\int_{0}^{\infty}\int_{\bR^d} \frac{y}{(|x|^2+y^2)^{\frac{d+1}{2}}(|x-z|^2+y^2)^{\frac{d+1}{2}}}\, dxdy\\
    &= \frac{C_d}{|z|^{d}}\int_{0}^{\infty}\int_{\bR^d} \frac{y}{(|x|^2+y^2)^{\frac{d+1}{2}}(|x-\theta|^2+y^2)^{\frac{d+1}{2}}}\, dxdy.
\end{align*}
{
Since $\{|x-\theta|\le \frac12\}\subset\{|x|>\frac12\}$, we have
\begin{align*}
    &\int_0^\infty\int_{\R^d} \frac{y}{(|x|^2+y^2)^{\frac{d+1}{2}}(|x-\theta|^2+y^2)^{\frac{d+1}{2}}}\, dxdy\\
    &=\int_0^\infty\int_{\{|x-\theta|\le \frac12\}} \frac{y}{(|x|^2+y^2)^{\frac{d+1}{2}}(|x-\theta|^2+y^2)^{\frac{d+1}{2}}}\, dxdy\\
    &\qquad +\int_0^\infty\int_{\{|x-\theta|> \frac12\}} \frac{y}{(|x|^2+y^2)^{\frac{d+1}{2}}(|x-\theta|^2+y^2)^{\frac{d+1}{2}}}\, dxdy\\
    &\le \int_0^\infty\int_{\{|x|> \frac12\}} \frac{y}{(\frac14+y^2)^{\frac{d+1}{2}}(|x-\theta|^2+y^2)^{\frac{d+1}{2}}}\, dxdy\\
    &\qquad +\int_0^\infty\int_{\{|x-\theta|> \frac12\}} \frac{y}{(|x|^2+y^2)^{\frac{d+1}{2}}(\frac14+y^2)^{\frac{d+1}{2}}}\, dxdy.
\end{align*}
By change of variables,
\begin{align*}
    &\int_0^\infty\int_{\R^d} \frac{y}{(|x|^2+y^2)^{\frac{d+1}{2}}(|x-\theta|^2+y^2)^{\frac{d+1}{2}}}\, dxdy\\
    &\qquad\le 2 \int_0^\infty\left(\int_{\R^d} \frac{y}{(|x|^2+y^2)^{\frac{d+1}{2}}}\, dx\right)\frac{1}{(\frac14+y^2)^{\frac{d+1}{2}}}dy\\
    &\qquad\le 2 \int_0^\infty\left(\int_{\R^d} \frac{1}{(|x|^2+1)^{\frac{d+1}{2}}}\, dx\right)\frac{1}{(\frac14+y^2)^{\frac{d+1}{2}}}dy<\infty.
\end{align*}
}
This together with the obvious  bound for the second term in \eqref{d>1} gives that $|\bK_{\bH^{(k)}}(z)|\leq \frac{C_d}{|z|^{d}}$,  
for $|z|>0$. Next, for $j=1,2,\cdots,d$, differentiation and \eqref{eq:hupperbound2} gives that for $|z|>1$, 
\begin{align*}
   \Big| \frac{\partial}{\partial z_{j}}I_1^{(k)}(z)\Big|
    &= C_d \Big|\int_0^{\infty}\int_{\R^d} \frac{x_k y^2 (x_j-z_j)}{h(x,y)(|x|^2+y^2)^{\frac{d+3}{2}}(|x-z|^2+y^2)^{\frac{d+3}{2}}}\, dxdy\Big|\\
    &\le  \frac{C_d}{|z|^{(d+1)}}\int_0^{\infty}\int_{\R^d} \frac{|x_k x_j|y}{(|x|^2+y^2)^{\frac{d+2}{2}}(|x-\theta|^2+y^2)^{\frac{d+3}{2}}}\, dxdy\\
    &\qquad + \frac{C_d |z_j|}{|z|^{(d+2)}}\int_0^{\infty}\int_{\R^d} \frac{|x_k z_j| y}{(|x|^2+y^2)^{\frac{d+2}{2}}(|x-\theta|^2+y^2)^{\frac{d+3}{2}}}\, dxdy\\
    &\leq \frac{C_d}{|z|^{d+1}},  
\end{align*}
where $z=|z|\theta$. Similarly, we  can obtain the same upper bound for $\frac{\partial}{\partial z_{j}}I_2^{(k)}(z)$, $|z|>1$, which leads to $\big|\nabla{\bK_{\bH^{(k)}}(z)}\big|\leq\frac{C_d}{|z|^{d+1}}$ for all $|z|>0$ and $|z|\neq 1$.   

It remains to show that the Fourier transform of $\bK_{\bH^{(k)}}(z)$ is bounded. By~\eqref{byparts} and Proposition~\ref{DisRieszKern1}, we have that 
\begin{align}\label{h=1case}
-4\int_{\R^{d}}\int_{0}^{\infty}\parder{p_0(x, y)}{x_k}\parder{}{y}(yp_z(x, y))\,dydx=\frac{c_d z_k}{|z|^{d+1}}, \quad |z|>0.
\end{align} 
This formula can also be easily verified using the Fourier transform.  More precisely, 
{
for a smooth function $\phi$ with compact support, we have
\begin{align*}
    &\int_{\R^d}\left(\int_{\R^{d}}\int_{0}^{\infty}\parder{p}{x_k}(x,y)\parder{}{y}(yp(x-z,y))\,dydx\right)\wh{\phi}(z)\, dz \\
    &=\int_{\R^{d}}\int_{\R^{d}}\int_{0}^{\infty}\parder{p}{x_k}(x,y)\parder{}{y}\left(\int_{\R^d}yp(x-z,y)e^{-2\pi i \xi z}\, dz\right)\phi(\xi)\,dydxd\xi \\
    &=\int_{\R^{d}}\int_{\R^{d}}\int_{0}^{\infty}\parder{p}{x_k}(x,y) \left(e^{-2\pi i \xi x}(1-2\pi|\xi|y)e^{-2\pi |\xi|y}\right) \phi(\xi)\,dydxd\xi \\
    &=\int_{\R^{d}}\int_{0}^{\infty}2\pi i \xi_k (1-2\pi|\xi|y)e^{-4\pi |\xi|y}\phi(\xi)\,dyd\xi \\
    &=\int_{\R^{d}}\frac{i\xi_k}{4|\xi|}\phi(\xi)\,dyd\xi.
\end{align*}
The equation~\eqref{h=1case} follows from the fact that the Fourier transform of $c_d\frac{z_k}{|z|^{d+1}}$ is $-i\frac{\xi_k}{|\xi|}$ in the principal sense.
}
Thus we can write \eqref{d>1} as 
\begin{align}\label{higherCZK2}
    \bK_{\bH^{(k)}}(z)
	=\left(-4\int_{0}^{\infty}\int_{\R^{d}}\left(\frac{1}{h}-1\right)\parder{p_0(x, y)}{x_k}\parder{}{y}(yp_z(x, y))\,dxdy\right) \ind_{\{|z|\ge 1\}}(z) +c_d\frac{\,z_k}{|z|^{d+1}}.
\end{align}
Since the Fourier transform of the second term is $-i\frac{\xi_k}{|\xi|}$ (the Fourier transform of the classical Riesz transforms), it is enough to show that 
\begin{align*}
    \int_{\{|z|\ge 1\}} J_1^{(k)} (z)e^{-2\pi i z\cdot \xi}\, dz,\quad
    \int_{\{|z|\ge 1\}} J_2^{(k)} (z)e^{-2\pi i z\cdot \xi}\, dz 
\end{align*} 
are uniformly bounded in $\xi$, where 
\begin{align}
	J_1^{(k)}(z)&:=\int_0^{\infty}\int_{\R^d}\left(\frac{1}{h(x,y)}-1\right)\frac{x_k y^2}{(|x|^2+y^2)^{\frac{d+3}{2}}(|x-z|^2+y^2)^{\frac{d+1}{2}}}\, dxdy,\label{J-1}\\
	J_2^{(k)}(z)&:=\int_0^{\infty}\int_{\R^d} \left(\frac{1}{h(x, y)}-1\right)\frac{x_k y^4}{(|x|^2+y^2)^{\frac{d+3}{2}}(|x-z|^2+y^2)^{\frac{d+3}{2}}}\, dxdy\label{J-2}.
\end{align}
By the estimate~\eqref{eq:hupperbound2}, $|\frac{1}{h(x,y)}-1|\le C_d\frac{\sqrt{x^2+y^2}}{y}$, and
\begin{align}\label{eq:kernelbd}
\int_{\{|x|\ge \frac12\}}(|x|^2+y^2)^{-\frac{d+1}{2}}\,dx\le
\int_{\{|x|\ge \frac12\}}|x|^{-(d+1)}\,dx\le C_d ,
\end{align}
for $0\le y\le 1$ we have 
\begin{align*}
    &\int_{\{|z|\ge 1\}} \int_0^1 \int_{\{|x|\le \frac12\}}
    \left|\frac{1}{h(x,y)}-1\right|\frac{|x_k| y^2}{(|x|^2+y^2)^{\frac{d+3}{2}}(|x-z|^2+y^2)^{\frac{d+1}{2}}}\, dxdydz\\
    &\le \int_0^1\int_{\{|x|\le \frac12\}}\left(\int_{\{|x-z|\ge \frac12\}}\frac{1}{(|x-z|^2+y^2)^{\frac{d+1}{2}}}\, dz\right)  \left|\frac{1}{h(x,y)}-1\right|\frac{|x_k| y^2}{(|x|^2+y^2)^{\frac{d+3}{2}}}\, dx dy\\
    &\le C_d\int_0^1 \int_{\{|x|\le \frac12\}} \frac{ y}{(|x|^2+y^2)^{\frac{d+1}{2}}}\, dx dy \le C_d.
\end{align*}
Note that in the first inequality, we used the fact that if $|x|\le \frac12$ and $|z|\ge 1$, then $|x-z|\ge\frac12$. Similarly,
\begin{align*}
    &\int_{\{|z|\ge 1\}} \int_1^\infty \int_{\{|x|\le \frac12\}}
    \left|\frac{1}{h(x,y)}-1\right|\frac{|x_k| y^2}{(|x|^2+y^2)^{\frac{d+3}{2}}(|x-z|^2+y^2)^{\frac{d+1}{2}}}\, dxdydz\\
    &\le C_d\int_1^\infty\left(\int_{\{|x|\ge \frac12\}}\frac{1}{(|x|^2+y^2)^{\frac{d+1}{2}}}\, dx\right) \left( \int_{\{|x|\le \frac12\}} \left|\frac{1}{h(x,y)}-1\right|\frac{|x_k| y^2}{(|x|^2+y^2)^{\frac{d+3}{2}}}\, dx \right)\, dy\\
    &\le C_d\int_1^\infty
    \int_{\{|x|\le \frac12\}} y^{-d-1}\, dx dy \le C_d.
\end{align*}
On the other hand, it follows from \eqref{eq:hupperbound2}, \eqref{eq:kernelbd}, and the bound
\begin{align}
    \int_{\{|z|\ge 1\}}\frac{y}{(|x-z|^2+y^2)^{\frac{d+1}{2}}}\, dz\le C_d
\end{align} 
that 
\begin{align*}
    &\int_{\{|z|\ge 1\}} \int_0^1 \int_{\{|x|\ge \frac12\}} \left|\frac{1}{h(x,y)}-1\right|\frac{|x_k| y^2}{(|x|^2+y^2)^{\frac{d+3}{2}}(|x-z|^2+y^2)^{\frac{d+1}{2}}}\, dxdydz\\
    &=\int_0^1 \int_{\{|x|\ge \frac12\}}\left( \int_{\{|z|\ge 1\}}\frac{y}{(|x-z|^2+y^2)^{\frac{d+1}{2}}}\, dz \right) \left|\frac{1}{h(x,y)}-1\right|\frac{|x_k| y}{(|x|^2+y^2)^{\frac{d+3}{2}}}\, dxdy\\
    &\le  C_d\int_0^1 \int_{\{|x|\ge \frac12\}} \left|\frac{1}{h(x,y)}-1\right|\frac{|x_k| y}{(|x|^2+y^2)^{\frac{d+3}{2}}}\, dxdy\\
    &\le C_d\int_0^1 \int_{\{|x|\ge \frac12\}} \frac{1}{(|x|^2+y^2)^{\frac{d+1}{2}}}\, dxdy\le C_d.
\end{align*}
By Lemma~\ref{lem:hlimit}, we know that $|1/h(x,y)-1|\le \frac{C_d}{y}$ for $y\ge 1$. Using this,
\begin{align*}
    &\int_{\{|z|\ge 1\}} \int_1^\infty \int_{\{|x|\ge \frac12\}} \left|\frac{1}{h(x,y)}-1\right|\frac{|x_k| y^2}{(|x|^2+y^2)^{\frac{d+3}{2}}(|x-z|^2+y^2)^{\frac{d+1}{2}}}\, dxdydz\\
    &= \int_1^\infty \int_{\{|x|\ge \frac12\}}\left(\int_{\{|z|\ge 1\}} \frac{y}{(|x-z|^2+y^2)^{\frac{d+1}{2}}}\, dz\right) \left|\frac{1}{h(x,y)}-1\right|\frac{|x_k| y}{(|x|^2+y^2)^{\frac{d+3}{2}}}\, dxdy\\
    &\le  C_d\int_1^\infty \int_{\{|x|\ge \frac12\}} \frac{ 1}{(|x|^2+y^2)^{\frac{d+2}{2}}}\, dxdy \\
    &\le  C_d\int_1^\infty \frac{1}{y^2}\, dy \int_{\R^d} \frac{ 1}{(|w|^2+1)^{\frac{d+2}{2}}}\, dw \le C_d.
\end{align*}
In the last inequality, we have used the change of variable $wy=x$. Thus, we get
\begin{align}\label{L1-J1}
    \left|\int_{\{|z|\ge 1\}} J_1^{(k)} (z)e^{-2\pi i z\cdot \xi}\, dz\right| \le \int_{\{|z|\ge 1\}} |J_1^{(k)} (z)|\, dz\le C_d.
\end{align}
Using the trivial bound $y^2/(|x-z|^2+y^2)\le 1$, it follows from the previous argument that
\begin{align*}
    &\left|\int_{\{|z|\ge 1\}} J_2^{(k)} (z)e^{-2\pi i z\cdot \xi}\, dz\right| \\
    &\le \int_{\{|z|\ge 1\}}\int_0^\infty \int_{\R^d} \left|\frac{1}{h(x,y)}-1\right|\frac{|x_k| y^4}{(|x|^2+y^2)^{\frac{d+3}{2}}(|x-z|^2+y^2)^{\frac{d+3}{2}}}\, dxdydz\\
    &\le \int_{\{|z|\ge 1\}}\int_0^\infty \int_{\R^d} \left|\frac{1}{h(x,y)}-1\right|\frac{|x_k| y^2}{(|x|^2+y^2)^{\frac{d+3}{2}}(|x-z|^2+y^2)^{\frac{d+1}{2}}}\, dxdydz\\
    &\le C_d.
\end{align*}
\end{proof} 

\begin{remark} 
Note that the proof of the boundedness of the Fourier transform for $d=1$ shows that in fact the function 
$$
J(z)=\frac{1}{\pi z}\left(\int_0^\infty\frac{2y^3}{(y^2+\pi^2 z^2)\sinh^2(y)}\, dy\right)\,\ind_{\{|z|\geq 1\}}(z)
$$
is in $L^1(\R)$ with $\|J\|_{L^1}\approx 0.09956$. Similarly, for $d>1$, the proof shows that $J_1^{(k)}(z)\ind_{\{|z|\geq 1\}}(z)$ and $J_2^{(k)}(z)\ind_{\{|z|\geq 1\}}(z)$ are in $L^1(\R^d)$ with bounds depending only on $d$. 
\end{remark} 

This gives the following

\begin{corollary}\label{ContDecom}
For $d=1$, the continuous probabilistic Hilbert transform is given by 
\begin{align}
\bK_{\bH}(z)&=\frac{1}{\pi z}\left(1+\int_0^\infty\frac{2y^3}{(y^2+\pi^2 z^2)\sinh^2(y)}\, dy\right)\,\ind_{\{|z|\geq 1\}}(z)+\frac{1}{\pi z}\ind_{\{|z|< 1\}}(z)\nonumber\\
&=J(z)+\frac{1}{\pi z},
\end{align} 
where $\|J\|_{L^1}\approx 0.09956$.

Similarly for $d>1$,
\begin{align}
\bK_{\bH^{(k)}}(z)&=J_1^{(k)}+J_2^{(k)}+c_d\frac{\,z_k}{|z|^{d+1}},\\
&= J^{(k)}
+c_d\frac{\,z_k}{|z|^{d+1}},\nonumber 
\end{align}
where $\|J^{(k)}\|_{L^1}\leq C_d$ where $C_d$ depends only on $d$.
\end{corollary}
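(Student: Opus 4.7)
The plan is to observe that the corollary is essentially a repackaging of estimates that already appeared (implicitly) inside the proof of Theorem~\ref{almostCZ}. The decomposition itself is not an issue: for $d>1$ it is the identity \eqref{higherCZK2}, which was derived from \eqref{byparts} by using \eqref{h=1case} to subtract off the pure Calder\'on--Zygmund Riesz part, and for $d=1$ it is the analogous splitting already written in \eqref{J-0}. Thus everything reduces to showing that the remainder integrals are in $L^1(\R^d)$.

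For $d=1$, I would compute $\|J\|_{L^1}$ directly. By the definition of $J$ and Fubini,
\begin{equation*}
    \|J\|_{L^1(\R)} = \int_{|z|\ge 1}\frac{1}{\pi|z|}\int_0^\infty \frac{2y^3}{(y^2+\pi^2 z^2)\sinh^2 y}\,dy\,dz = \frac{2}{\pi}\int_0^\infty \frac{2y^3}{\sinh^2 y}\left(\int_1^\infty \frac{dz}{z(y^2+\pi^2 z^2)}\right)dy,
\end{equation*}
and the inner integral equals $\ln(y^2/\pi^2+1)/(2y^2)$, already used in~\eqref{eq:JL1}. Substituting gives
\begin{equation*}
    \|J\|_{L^1} = \frac{2}{\pi}\int_0^\infty \frac{y\ln(y^2/\pi^2+1)}{\sinh^2 y}\,dy,
\end{equation*}
which is the constant claimed (and the integrand is manifestly integrable near $0$ since $\ln(1+y^2/\pi^2)\sim y^2/\pi^2$ and $\sinh^2 y \sim y^2$, while exponential decay controls $\infty$).

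For $d>1$, I would just read off the $L^1$ bounds for $J_1^{(k)}\ind_{\{|z|\ge 1\}}$ and $J_2^{(k)}\ind_{\{|z|\ge 1\}}$ from the proof of part~(iv) of Theorem~\ref{almostCZ}. Indeed, the estimate~\eqref{L1-J1} is exactly the statement that $\|J_1^{(k)}\ind_{\{|z|\ge 1\}}\|_{L^1}\le C_d$, and it was obtained by splitting the integration in $(x,y)$ into the four regions $\{|x|\lessgtr \frac12\}\times\{y\lessgtr 1\}$, bounding $|1/h(x,y)-1|$ using Lemma~\ref{lem:hlimit} for $y\ge 1$ and using $|1/h(x,y)-1|\le C_d\sqrt{x^2+y^2}/y$ (from~\eqref{eq:hupperbound2}) for $y<1$, and absorbing the $z$-integration via $\int_{|z|\ge 1}y(|x-z|^2+y^2)^{-(d+1)/2}\,dz\le C_d$. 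The same four-region decomposition, with the trivial extra factor $y^2/(|x-z|^2+y^2)\le 1$, produces the analogous bound for $J_2^{(k)}$. Setting $J^{(k)}:=J_1^{(k)}+J_2^{(k)}$ and extending by zero for $|z|<1$ yields the stated inequality $\|J^{(k)}\|_{L^1}\le C_d$.

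There is essentially no new obstacle: all the analytic work is already contained in Theorem~\ref{almostCZ}. The only mildly delicate point is cosmetic, namely making sure the $L^1$ estimates~\eqref{L1-J1} and its analogue for $J_2^{(k)}$ are stated as absolute integrability bounds (which they are, since each intermediate step uses $|e^{-2\pi i z\cdot\xi}|\le 1$ trivially, i.e.\ the inequalities in the proof of~(iv) hold without the Fourier factor); once that is noted, the corollary follows by merely collecting terms.
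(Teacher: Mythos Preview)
Your proposal is correct and matches the paper's approach exactly: the paper presents this corollary with no separate proof, merely the phrase ``This gives the following'' preceded by a remark pointing out that the computations in the proof of Theorem~\ref{almostCZ}(iii)--(iv) already establish $J\in L^1(\R)$ with the stated numerical value and $J_i^{(k)}\ind_{\{|z|\ge 1\}}\in L^1(\R^d)$ with dimensional bounds. Your observation that the Fourier-transform estimates in~(iv) are really absolute-value bounds (since $|e^{-2\pi i z\cdot\xi}|\le 1$ is used throughout) is precisely the point that makes the $L^1$ conclusion immediate.
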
 
With $J$ and $J^{(k)}$ as above, the probabilistic continuous Hilbert and Riesz transforms are of the form:
\begin{align}
Tf=\bK_{\bH}*f&=J*f+Hf, \\ 
T^kf =\bK_{\bH^{(k)}}*f&=J^{(k)}*f+R^{(k)}*f, \quad k=1, \dots, d.
\end{align}

Recall that for $\varep > 0$, $1< p<\infty$ and a function $F$ on $\R^{d}$, denote $\tau_{\varep} F(x) = \varep^{d/p} F(\varep x)$. 
We define $(T^k)^\varep = \tau_{1/\varep}T^k\tau_{\varep}$.
Following the proof of Lemma~\ref{lem:DiscreteApproximation}, one has the following
\begin{lemma}
Suppose that $F$ is a smooth and compactly supported function on $\R^d$. Then
$$
	\lim_{\varep \to 0^+} (T^k)^\varep F(x) = R^{(k)} F(x),
$$
for every $x \in \R^{d}$.
As a result, we have $\cot(\pi/(2p^\ast))=\|R^{(k)} F\|_{p\to p}\le \|T^{k}\|_{p\to p}$. The same holds for $T$ as well.
\end{lemma}
\begin{proof}
Note that we have
\begin{align*}
    (T^k)^\varep F(x)
    &= \frac12 \int_{\R^d} \bK_{\bH^{(k)}}(y)(F(x-\varep y)-F(x+\varep y))\, dy\\
    &= \frac12 \int_{\R^d} K^{(k)}(y)(F(x-\varep y)-F(x+\varep y))\, dy\\
    &\qquad + \frac12 \int_{|y|\ge 1} (\bK_{\bH^{(k)}}(y)-K^{(k)}(y))(F(x-\varep y)-F(x+\varep y))\, dy.
\end{align*} 
Since $K^{(k)}(y)= \varep^d K^{(k)}(\varep y)=-K^{(k)}(-y)$, we have
\begin{align*}
    \frac12\int K^{(k)}(y)(F(x-\varep y)-F(x+\varep y))\, dy
    = R^{(k)}F(x).
\end{align*}
Since $F$ is smooth and compactly supported, there exists $R>0$ such that $|F(x-\varep y)-F(x+\varep y)|\le C_1\ind_{B(0,R)}(\varep y)|\varep y|$ for some $C_1>0$. By Lemma~\ref{lem:DiscreteAsymptotics}, for given $\delta>0$, there exists $r>0$ such that $|\bK_{\bH^{(k)}}(y)-K^{(k)}(y)|\le \delta |y|^{-d}$ for $|y|>r$. Let $C_2=\sup_{|y|\ge 1}|\bK_{\bH^{(k)}}(y)-K^{(k)}(y)|$, then
\begin{align*}
    &\int_{|y|\ge 1} (\bK_{\bH^{(k)}}(y)-K^{(k)}(y))(F(x-\varep y)-F(x+\varep y))\, dy\\
    &\qquad \le C_1 \varep \int_{1\le |y|\le R/\varep} |y||\bK_{\bH^{(k)}}(y)-K^{(k)}(y)|\, dy \\
    &\qquad \le C_1 C_2 \varep \int_{1\le |y|\le r} |y|\, dy 
    +C_1 \delta \varep \int_{1\le |y|\le R/\varep} |y|^{1-d}\, dy \\
    &\qquad \le C_3 r^2 \varep +C_4 \delta.
\end{align*}
Choose $\delta$ small enough and letting $\varep\to 0$, we conclude that $(T^k)^\varep(F)(x)$ converges to $R^{(k)}(F)(x)$. By Fatou's lemma, we also have the lower bound of the $L^p$ norm of $T^{k}$ as desired.
\end{proof}

We record the $L^p$-boundedness of the operators in the following  Theorem. 

\begin{theorem}\label{LpCont} 
For $1<p<\infty$, 
\begin{align}\label{ContHoilbert}
\cot(\pi/(2p^\ast))\le \|T\|_{L^p\to L^p}\leq \|J\|_{L^1}+\cot(\pi/(2p^\ast))\approx 0.09956+\cot(\pi/(2p^\ast))
\end{align} 
and 
\begin{align}\label{ContRiesz}
\cot(\pi/(2p^\ast))\le \|T^k\|_{L^p\to L^p}\leq \|J^{(k)}\|_{L^1}+\cot(\pi/(2p^\ast))\leq C_d+\cot(\pi/(2p^\ast)),
\end{align} 
where $C_d$ depends on the dimension $d$.
\end{theorem} 

\begin{conjecture}\label{conticon}
We conjecture that the $L^p$-norm of the operator $T$ should be $cot(\pi/(2p^\ast))$ and similarly for $T^k$.  For the latter, even the conjecture that the norm  is independent of $d$ would be of interest. See the discussion preceding \eqref{vector}. 
\end{conjecture}

\begin{remark}
Recall that the smoothness condition $|\nabla K(z)|\le \kappa |z|^{-(d+1)}$ can be relaxed with H\"ormander's condition
\begin{align}\label{Hor}
    \sup_{y\neq 0}\int_{|x|\ge 2|y|}|K(x-y)-K(x)|\, dx\le B<\infty,
\end{align}
see \cite{Graf}*{Theorem 4.3.3} and \cite{Stein70}*{Corollary on p.34, Theroem 2, p.35}. In particular, 
if $K$ satisfies $|K(x)|\le \kappa|x|^{-d}$ and H\"ormander's condition, then the convolution operator $T$ with kernel $K$ is bounded on $L^p$, $1<p<\infty$. 
\end{remark} 
\begin{theorem}\label{hormander}
The kernels $\bK_{\bH^{(k)}}$ satisfy the H\"ormander's condition.
\end{theorem}
\begin{proof} 
 We have already seen that 
\begin{align*}
    |\nabla\bK_{\bH^{(k)}}(x)|\le \frac{C}{|x|^{d+1}}, \quad \text{for }|x|\neq 0,1.
\end{align*}
Suppose $|y|>1$. If $|x|\ge 2|y|$ then 
$$
|\theta (x-y) +(1-\theta)x|=|x-\theta y|\ge (1-\tfrac{\theta}{2})|x|\ge (2-\theta)|y|
$$
for all $0\le \theta\le 1$. By Taylor's theorem, we have
\begin{align*}
    |\bK_{\bH^{(k)}}(x-y)-\bK_{\bH^{(k)}}(x)| \le |\nabla \bK_{\bH^{(k)}}(x-\theta y)||y| \le \frac{C|y|}{|x-\theta y|^{d+1}} \le \frac{C|y|}{|x|^{d+1}},
\end{align*}
which leads to
\begin{align*}
    \int_{|x|\ge 2|y|}|\bK_{\bH^{(k)}}(x-y)-\bK_{\bH^{(k)}}(x)|\, dx
    \le C|y|\int_{2|y|}^\infty \frac{1}{r^2}\, dr
    \le C<\infty.
\end{align*}
If $|y|\le 1$ and $|x|>|y|+1$, then $|x-\theta y|>1$ for $0\le \theta\le 1$. Thus, the same argument yields
\begin{align*}
    \int_{|x|\ge |y|+1}|\bK_{\bH^{(k)}}(x-y)-\bK_{\bH^{(k)}}(x)|\, dx \le C<\infty.
\end{align*}
Let $\tfrac14<|y|\le 1$ and $2|y|\le |x|\le |y|+1$. Using $|\bK_{\bH^{(k)}}(x)|\le \kappa|x|^{-d}$ and $|x-y|\ge \tfrac12 |x|$, we get
\begin{align*}
    \int_{2|y|\le|x|\le |y|+1}|\bK_{\bH^{(k)}}(x-y)-\bK_{\bH^{(k)}}(x)|\, dx 
    \le C\int_{2|y|\le|x|\le |y|+1}|x|^{-d} \, dx 
    = C \left|\log\left(\frac{|y|+1}{2|y|}\right)\right|,
\end{align*}
which is bounded for $|y|\in (\tfrac14,1]$.
Suppose $|y|\le\tfrac14$ and $\tfrac34\le |x|\le |y|+1$, then the same argument gives 
\begin{align*}
    \int_{\tfrac34\le|x|\le |y|+1}|\bK_{\bH^{(k)}}(x-y)-\bK_{\bH^{(k)}}(x)|\, dx 
    \le C\int_{\tfrac34\le|x|\le |y|+1}|x|^{-d} \, dx 
    = C |\log(\tfrac43(|y|+1))|<\infty.
\end{align*}
If $|y|\le \tfrac14$ and $2|y|\le|x|<\tfrac34$, then $|x-\theta y|<1$. Thus it follows from the gradient bound that
\begin{align*}
    \int_{2|y|\le |x|< \frac34}|\bK_{\bH^{(k)}}(x-y)-\bK_{\bH^{(k)}}(x)|\, dx
    \le C|y|\int_{2|y|}^{\frac34} \frac{1}{r^2}\, dr
    \le C<\infty.
\end{align*}
Therefore  the kernel $\bK_{\bH^{(k)}}$ satisfies H\"ormander's condition and the $L^p(\R^d)$-boundedness of the operators for $1<p<\infty$ also follows from the Calder\'on--Zygmund theory. 
\end{proof}

\section{A method of rotations for discrete Riesz trasforms}\label{MethodRotations}
Given the fact that the classical method of rotations can be used to show that the Riesz transforms (and other singular integrals) in $\R^d$ have norms bounded above by the norm of the Hilbert transform, as discussed in Section \ref{Conjecture}, it is natural to ask if there is a discrete version of such a technique that would reduce the boundedness of operators on $\ell^p(\Z^d)$ (with some assumptions on their kernel) to the boundedness of $H_{\dis}$ on $\ell^p(\Z)$. While  this does not seem to be the case for the setting of the discrete Riesz transform as defined in \eqref{DRT-1}, we can define closely related operators for which such a procedure is possible.

\subsection{Two-dimensional case}
We first consider the $d=2$ case  where a particularly simple expression for the discrete transform is available. For $j=1, 2$, from \eqref{classicalCRT-1} we have 
\begin{align*}
    R^{(i)}f(x) & = \frac{1}{2 \pi} \, \mathrm{p.v.\!} \int_{\R^2} f(y) \, \frac{x_i - y_i}{|x - y|^3} \, dy .
\end{align*}
Note that
$$ \frac{\partial^2 |y|}{\partial y_j^2} = \frac{1}{|y|} - \frac{y_j^2}{|y|^3} = \frac{|y|^2 - y_j^2}{|y|^3} = \frac{y_i^2}{|y|^3} \, ,$$
where $i = 1$, if $j = 2$ and $i = 2$, if  $j = 1$. Hence, the kernel of $R^{(i)}$ is given by
$$\frac{1}{y_i} \frac{\partial^2 |y|}{\partial y_j^2} \, .$$

Although not necessarily natural, this motivates the following  definition for a different variant of discrete Riesz transforms
$$ \drt^{(i)} f(n) = \frac{1}{2 \pi} \sum_{m \in \Z^2} f(m) \frac{|n-m+e_j| + |n-m-e_j| - 2 |n-m|}{{n_i - m_i} } \ind_{\{{m_i \ne n_i} \}}. $$

For simplicity, we consider ${i}= 1$. Fix $a, b \in \R$ and define the directional discrete Hilbert transform via the formula
\begin{align*}
    \dht_{a,b} f(n) & = \frac{1}{\pi} \sum_{m \in \Z^2} \frac{f(m)}{n_1 - m_1} \, \ind_{\{n_1 \ne m_1, n_2 - \tfloor{a n_1 + b} = m_2 - \tfloor{a m_1 + b}\}} .
\end{align*}
The intuition behind this definition is as follows. We split $\Z^2$ into an infinite family of ``one-dimensional'' sets
\begin{align*}
    F_{a,b,l} & = \{ (k, \tfloor{a k + b} + l) : k \in \Z \} = \{ n \in \Z^2 : n_2 - \tfloor{a n_1 + b} = l \} ,
\end{align*}
where $l$ takes arbitrary integer values. Then $\dht_{a,b}$ acts as a (one-dimensional) discrete Hilbert transform on each of the fibers $F_{a,b,l}$. In particular, the above interpretation combined with Corollary~\ref{BanKwaCorollary} immediately gives that 
\begin{align*}
    \|\dht_{a,b}\|_{p \to p} & = \cot(\tfrac{\pi}{2p^*}),
\end{align*}
which is the norm of the continuous Hilbert transform. 

\begin{theorem}\label{d=2}
For compactly supported $f : \Z^2 \to \R$, we have
\begin{align}\label{eq:mor}
    \drt^{({1})} f(n) & = \frac{1}{2} \int_0^1 \int_{-\infty}^\infty \frac{1}{(1 + a^2)^{3/2}} \, \dht_{a,b} f(n) da db, 
\end{align}
\end{theorem}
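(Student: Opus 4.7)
My plan is to substitute the definition of $\dht_{a,b}$ into the right-hand side of \eqref{eq:mor} and interchange the double integral with the sum over $m$, which is legitimate since $f$ is compactly supported and the sum therefore has only finitely many nonzero terms. Matching the coefficient of $f(m)$ on both sides reduces the theorem to the identity
\begin{equation*}
    \int_0^1 \int_{-\infty}^\infty \frac{\ind_{\{\tfloor{a n_1+b} - \tfloor{a m_1+b} = q\}}}{(1+a^2)^{3/2}} \, da \, db = \sqrt{p^2+(q+1)^2} + \sqrt{p^2+(q-1)^2} - 2\sqrt{p^2+q^2},
\end{equation*}
to be established for all integers $p = n_1 - m_1 \ne 0$ and $q = n_2 - m_2$; the right-hand side is precisely $|v+e_2|+|v-e_2|-2|v|$ with $v = n-m$, matching the numerator in the definition of $\drt^{(1)}$.

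The first step is to evaluate the $b$-integral for fixed $a$. Writing $\tfloor{a n_1+b} - \tfloor{a m_1+b} = ap - (\{a n_1+b\}-\{a m_1+b\})$, the condition inside the indicator becomes $\{a n_1+b\}-\{a m_1+b\} = ap-q$. As $b$ runs over $[0,1)$, the quantity $c := \{a m_1+b\}$ traverses $[0,1)$ uniformly, and then $\{a n_1+b\} = \{ap+c\}$. A case split on whether $\{ap\}+c \ge 1$ shows that $\{a n_1+b\}-\{a m_1+b\}$ takes only the two values $\{ap\}$ (for $c \in [0,1-\{ap\})$) and $\{ap\}-1$ (for $c \in [1-\{ap\},1)$). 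Hence the inner integral equals $\max(0, 1-|ap-q|)$: it is $1-\{ap\}$ when $\tfloor{ap}=q$, equals $\{ap\}$ when $\tfloor{ap}=q-1$, and vanishes otherwise.

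The remaining task is to compute $\int_{-\infty}^\infty (1+a^2)^{-3/2}\max(0, 1-|ap-q|) \, da$. Since everything depends on $p$ only through $p^2$ (the substitution $a \mapsto -a$ reduces $p<0$ to $p>0$), we may assume $p>0$. Then $t=ap$ transforms the integral into $p^2 \int_{q-1}^{q+1}(1-|t-q|)(p^2+t^2)^{-3/2}\,dt$, which is elementary. Using the antiderivatives $\int (p^2+t^2)^{-3/2} \, dt = t/(p^2\sqrt{p^2+t^2})$ and $\int t(p^2+t^2)^{-3/2} \, dt = -1/\sqrt{p^2+t^2}$, splitting the $|t-q|$ integral at $t=q$, and combining the boundary evaluations, numerators of the form $p^2+(q\pm 1)^2$ appear and collapse against $\sqrt{p^2+(q\pm 1)^2}$ in the denominators, yielding the target sum of three distances. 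The main obstacle is step one: carefully identifying the two values of $\{a n_1+b\}-\{a m_1+b\}$ and the resulting dichotomy $\tfloor{ap}=q$ versus $\tfloor{ap}=q-1$ requires care, but once the compact formula $\max(0, 1-|ap-q|)$ is in hand, the remainder is bookkeeping.
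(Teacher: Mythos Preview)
Your proof is correct and follows essentially the same approach as the paper: both reduce \eqref{eq:mor} to the kernel identity and then evaluate the resulting double integral by Fubini and elementary antiderivatives. The only cosmetic difference is the order of integration---the paper shifts the $b$-interval by periodicity and integrates in $a$ first (using $\int(1+a^2)^{-3/2}\,da=a/\sqrt{1+a^2}$), whereas you integrate in $b$ first to obtain the tent function $\max(0,1-|ap-q|)$ and then handle the $a$-integral via the substitution $t=ap$.
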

\begin{proof}
Formula~\eqref{eq:mor} is equivalent to
\begin{align*}
    & \frac{1}{2 \pi} \, \frac{|n - m + e_2| + |n - m - e_2| - 2 |n - m|}{n_1 - m_1} \\
    & \qquad = \frac{1}{2} \int_0^1 \int_{-\infty}^\infty \frac{1}{(1 + a^2)^{3/2}} \, \frac{1}{\pi} \, \frac{1}{m_1 - n_1} \, \ind_{\{m_2 - \tfloor{a m_1 + b} = n_2 - \tfloor{a n_1 + b}\}} \, da db,
\end{align*}
whenever $m_1 \ne n_1$. After elementary simplification, we need to prove that
\begin{align*}
    & |n - m + e_2| + |n - m - e_2| - 2 |n - m| \\
    & \qquad = \int_0^1 \int_{-\infty}^\infty \frac{1}{(1 + a^2)^{3/2}} \, \ind_{\{m_2 - \tfloor{a m_1 + b} = n_2 - \tfloor{a n_1 + b}\}} \, da db .
\end{align*}
We denote the right-hand side of the above equality by $I$.

The integrand in $I$ is a periodic function of $b$, with period $1$. Therefore, we may integrate with respect to $b$ over an arbitrary interval of unit length. For convenience, we choose this to be $[-a n_1, -a n_1 + 1)$, so that $\tfloor{a n_1 + b} = 0$, and we substitute $c = a n_1 + b$. It follows that
\begin{align*}
    I & = \int_{-\infty}^\infty \int_{-a n_1}^{-a n_1 + 1} \frac{1}{(1 + a^2)^{3/2}} \, \ind_{\{m_2 - \tfloor{a m_1 + b} = n_2\}} \, db da \\
    & = \int_{-\infty}^\infty \int_0^1 \frac{1}{(1 + a^2)^{3/2}} \, \ind_{\{m_2 - \tfloor{a (m_1 - n_1) + c} = n_2\}} \, dc da .
\end{align*}
We consider the case $m_1 > n_1$, the remaining case $m_1 < n_1$ being very similar. We have
\begin{align*}
    I & = \int_0^1 \int_{-\infty}^\infty \frac{1}{(1 + a^2)^{3/2}} \, \ind_{\{m_2 - \tfloor{a (m_1 - n_1) + c} = n_2\}} \, da dc \\
    & = \int_0^1 \int_{(m_2 - n_2 - c) / (m_1 - n_1)}^{(m_2 - n_2 - c + 1) / (m_1 - n_1)} \frac{1}{(1 + a^2)^{3/2}} \, da dc \\
    & = \int_0^1 \expr{\frac{m_2 - n_2 - c + 1}{\sqrt{(m_2 - n_2 - c + 1)^2 + (m_1 - n_1)^2}} - \frac{m_2 - n_2 - c}{\sqrt{(m_2 - n_2 - c)^2 + (m_1 - n_1)^2}}} dc \\
    & = \expr{-\sqrt{(m_2 - n_2)^2 + (m_1 - n_1)^2} + \sqrt{(m_2 - n_2 - 1)^2 + (m_1 - n_1)^2}} \\
    & \qquad - \expr{-\sqrt{(m_2 - n_2 + 1)^2 + (m_1 - n_1)^2} + \sqrt{(m_2 - n_2)^2 + (m_1 - n_1)^2}} \\
    & = |m - n - e_2| + |m - n + e_2| - 2 |m - n| ,
\end{align*}
as desired.
\end{proof}

Note that
\begin{align*}
    \int_{-\infty}^\infty \frac{1}{(1 + a^2)^{3/2}} \, da & = 2 .
\end{align*}
This, as in the classical method of rotations, immediately leads to the following estimate.

\begin{corollary}
We have $$\|\drt^{(1)}\|_{\ell^p \to \ell^p} \leq  \cot(\tfrac{\pi}{2p^*}).$$
\end{corollary}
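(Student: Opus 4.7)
The plan is to combine the integral representation from Theorem~\ref{d=2} with Minkowski's integral inequality and the already-known norm of the directional discrete Hilbert transform $\dht_{a,b}$. Specifically, I would start from the identity
\begin{equation*}
    \drt^{(1)} f(n) = \frac{1}{2}\int_0^1\int_{-\infty}^{\infty} \frac{1}{(1+a^2)^{3/2}}\,\dht_{a,b} f(n)\, da\, db,
\end{equation*}
view this as a vector-valued integral in $\ell^p(\Z^2)$, and push the $\ell^p$-norm inside via Minkowski.

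Concretely, applying Minkowski's integral inequality yields
\begin{equation*}
    \|\drt^{(1)} f\|_{\ell^p} \le \frac{1}{2}\int_0^1\int_{-\infty}^{\infty}\frac{1}{(1+a^2)^{3/2}}\,\|\dht_{a,b} f\|_{\ell^p}\, da\, db.
\end{equation*}
As noted in the excerpt, for each fixed $(a,b)$ the operator $\dht_{a,b}$ decomposes $\Z^2$ into the disjoint fibers $F_{a,b,l}$, each of which is in bijection with $\Z$, and on each fiber it acts as the one-dimensional discrete Hilbert transform $H_{\dis}$. Hence Corollary~\ref{BanKwaCorollary}, together with summing the $p$-th powers over $l\in\Z$, gives $\|\dht_{a,b}\|_{\ell^p\to\ell^p} = \cot(\pi/(2p^*))$ uniformly in $(a,b)$.

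Substituting this uniform bound and using the elementary identity $\int_{-\infty}^{\infty}(1+a^2)^{-3/2}\,da=2$ (which together with the $b$-integration over $[0,1]$ and the prefactor $\tfrac12$ produces a total mass of $1$) immediately yields
\begin{equation*}
    \|\drt^{(1)} f\|_{\ell^p} \le \cot\!\Paren{\tfrac{\pi}{2p^*}}\|f\|_{\ell^p}.
\end{equation*}
The proof is essentially a verification; there is no genuine obstacle, since all the hard work—the combinatorial identity in Theorem~\ref{d=2} and the sharp norm of $H_{\dis}$—has already been established. The only minor technical point worth checking is that Minkowski's integral inequality applies, i.e., that $(n,a,b)\mapsto (1+a^2)^{-3/2}\,\dht_{a,b}f(n)$ is jointly measurable and absolutely integrable in $(a,b)$ for each fixed compactly supported $f$, which follows from the explicit sum defining $\dht_{a,b}f(n)$ and the decay of $(1+a^2)^{-3/2}$.
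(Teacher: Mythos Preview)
Your proposal is correct and follows essentially the same approach as the paper: the paper simply notes that $\int_{-\infty}^{\infty}(1+a^2)^{-3/2}\,da = 2$ and states that the estimate follows ``as in the classical method of rotations,'' which is precisely the Minkowski-inequality argument you spell out. Your version is just a more detailed unpacking of the paper's one-line justification.
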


On the other hand, we have the following perfect analogue of Lemma~\ref{lem:DiscreteAsymptotics} which gives the opposite inequality. 

\begin{lemma}\label{lem:RotationAsymptotics} 
If we denote by $K_{\drt^{(k)}}(n)$ the kernel of $\drt^{(k)}$, then
\begin{align*}
    \lim_{|n| \to \infty} |n|^2 |K_{\drt^{(k)}}(n) - K_{R_{\dis}^{(k)}}(n) | = 0.
\end{align*}
\end{lemma}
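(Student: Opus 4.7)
The plan is to compute the difference $K_{\drt^{(k)}}(n) - K_{R_{\dis}^{(k)}}(n)$ directly via a second-order Taylor expansion of the function $t \mapsto |n + t e_j|$, where $j$ is the index in $\{1,2\}$ complementary to $k$. Both kernels vanish whenever $n_k = 0$, so I restrict to $n_k \neq 0$, in which case $|n_k| \ge 1$. Since $c_2 = \Gamma(3/2)/\pi^{3/2} = 1/(2\pi)$, the lemma reduces to showing
\begin{align*}
\left| \frac{|n+e_j|+|n-e_j|-2|n|}{n_k} - \frac{n_k}{|n|^3} \right| = o(|n|^{-2}) \qquad\text{as } |n|\to\infty,\; n_k \ne 0.
\end{align*}

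Set $f(t) := |n + t e_j| = \sqrt{n_k^2 + (n_j+t)^2}$. A direct calculation gives
\begin{align*}
f''(t) = \frac{n_k^2}{(n_k^2+(n_j+t)^2)^{3/2}}, \qquad\text{so}\qquad f''(0) = \frac{n_k^2}{|n|^3}.
\end{align*}
Taylor's theorem with integral remainder yields
\begin{align*}
|n+e_j|+|n-e_j|-2|n| = f(1)+f(-1)-2f(0) = \int_0^1(1-t)\bigl[f''(t)+f''(-t)\bigr]\,dt,
\end{align*}
and since $\int_0^1 (1-t)\cdot 2\,dt = 1$, the task reduces to estimating $f''(t) - f''(0)$ uniformly for $t\in[-1,1]$. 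For large $|n|$ one has $n_k^2+(n_j+t)^2 = |n|^2 + 2n_j t + t^2$, with $|2n_j t + t^2| \le 2|n|+1$; applying the mean value theorem to $s\mapsto s^{-3/2}$ on an interval around $s = |n|^2$ gives $\bigl|(n_k^2+(n_j+t)^2)^{-3/2} - |n|^{-3}\bigr| \le C|n|^{-4}$. Multiplying by $n_k^2$ produces the key bound $|f''(t) - f''(0)| \le C n_k^2 / |n|^4$, and integrating against $(1-t)$ gives
\begin{align*}
\Bigl|\,|n+e_j|+|n-e_j|-2|n| \;-\; \tfrac{n_k^2}{|n|^3}\,\Bigr| \le C\,\tfrac{n_k^2}{|n|^4}.
\end{align*}

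Dividing by $n_k$ (permissible since $|n_k|\ge 1$) and using $|n_k|\le |n|$ to absorb one power, I obtain
\begin{align*}
\bigl|K_{\drt^{(k)}}(n) - K_{R_{\dis}^{(k)}}(n)\bigr| \;\le\; \frac{C}{2\pi}\,\frac{|n_k|}{|n|^4} \;\le\; \frac{C'}{|n|^3},
\end{align*}
whence $|n|^2 \bigl|K_{\drt^{(k)}}(n) - K_{R_{\dis}^{(k)}}(n)\bigr| = O(|n|^{-1}) \to 0$, as required. The only subtle point is that the Taylor remainder must carry an \emph{explicit} factor of $n_k^2$, so that after dividing by $n_k$ one gains a factor $|n_k|\le |n|$ rather than losing a factor $1/|n_k|$ (which would be disastrous when $|n_k|=1$ and $|n|$ is large). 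This is precisely what the identity $f''(t) = n_k^2 (n_k^2+(n_j+t)^2)^{-3/2}$ provides. Beyond this bookkeeping, the argument is a routine Taylor estimate and I anticipate no genuine obstacle.
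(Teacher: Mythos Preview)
Your proof is correct and follows essentially the same approach as the paper: both express the second difference $|n+e_j|+|n-e_j|-2|n|$ via the integral remainder $\int_{-1}^1(1-|t|)f''(t)\,dt$ with $f(t)=|n+te_j|$, then compare $f''(t)$ to $f''(0)=n_k^2/|n|^3$ by a mean-value estimate to obtain the $O(|n|^{-3})$ bound on the kernel difference. The only cosmetic difference is that the paper divides by $n_1$ before applying the mean value theorem (estimating $|n_1/(n_1^2+(n_2+x)^2)^{3/2}-n_1/|n|^3|$ directly), whereas you keep the $n_k^2$ factor and divide at the end; both routes yield the same $C/|n|^3$ bound.
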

\begin{proof}
The argument boils down to an application of Taylor's theorem and elementary estimates. If $n = (n_1, n_2) \in \Z^{2}$, $|n| \geq 2$ and $n_1 \ne 0$, then
\begin{align*}
	|n - e_2| + |n + e_2| - 2 |n|
	& = \sqrt{n_1^2 + (n_2 - 1)^2} + \sqrt{n_1^2 + (n_2 + 1)^2} - 2 \sqrt{n_1^2 + n_2^2} \\
	& = \int_{-1}^1 (1 - |x|) \frac{n_1^2}{(n_1^2 + (n_2 - x)^2)^{3/2}} \, dx ,
\end{align*}
and hence
\begin{align*}
	& \biggl|\frac{|n - e_2| + |n + e_2| - 2 |n|}{n_1} - \frac{n_1}{(n_1^2 + n_2^2)^{3/2}}\biggr| \\
	& \qquad = \biggl| \int_{-1}^1 (1 - |x|) \frac{n_1}{(n_1^2 + (n_2 + x)^2)^{3/2}} \, dx - \int_{-1}^1 (1 - |x|) \frac{n_1}{(n_1^2 + n_2^2)^{3/2}} \, dx\biggr| \\
	& \qquad \leq \int_{-1}^1 (1 - |x|) \biggl| \frac{n_1}{(n_1^2 + (n_2 + x)^2)^{3/2}} - \frac{n_1}{(n_1^2 + n_2^2)^{3/2}} \biggr| dx .
\end{align*}
However, $|n + y e_2| \geq |n| - |y| \geq |n| - 1 \geq \frac{1}{2} |n|$ when $|y| \leq 1$ and $|n| \geq 2$, so that
\begin{align*}
	\biggl| \frac{n_1}{(n_1^2 + (n_2 + x)^2)^{3/2}} - \frac{n_1}{(n_1^2 + n_2^2)^{3/2}} \biggr|
	& = \biggl| \int_0^x \frac{3 n_1 (n_2 + y)}{(n_1^2 + (n_2 + y)^2)^{5/2}} \, dy \biggr| \\
	& \leq \frac{96 |n_1| (|n_2| + 1)}{|n|^5}
	\leq \frac{192}{|n|^3}
\end{align*}
when $|x| \leq 1$ and $|n| \ge 2$. It follows that
\begin{align*}
	\biggl|\frac{|n - e_2| + |n + e_2| - 2 |n|}{n_1} - \frac{n_1}{n_1^2 + n_2^2}\biggr|
	\leq \frac{192}{|n|^3} \int_{-1}^1 (1 - |x|) \, dx
	= \frac{192}{|n|^3}
\end{align*}
when $|n| \ge 2$ and $n_1 \ne 0$, and the desired result follows.
\end{proof}

With the above result at hand, we can follow the proof of Lemma~\ref{lem:DiscreteApproximation} and show that appropriately rescaled operators $\drt^{(k)}$ can be used to approximate (in the point-wise sense) the continuous Riesz transforms $R^{(k)}$, and consequently
\begin{align*}
	\|\drt^{(k)}\|_{\ell^p \to \ell^p} \geq \|R^{(k)}\|_{L^p \to L^p} = \cot(\tfrac{\pi}{2p^*}).
\end{align*}
We have thus proved the following result.

\begin{theorem}\label{thm:rot:two}
The two-dimensional discrete Riesz transforms, defined for $i = 1, 2$ by
\begin{align*}
    \drt^{(i)} f(n) & = \frac{1}{2 \pi} \sum_{m \in \Z^2} f(m) \frac{|n-m+e_j| + |n-m-e_j| - 2 |n-m|}{{n_i - m_i}
} \ind_{\{m_i \ne n_i\}} ,
\end{align*}
where $j = 2$ if $i = 1$ and $j = 1$ if $i = 2$, have norms on $\ell^p$ equal to the norms on $L^p$ of the corresponding continuous Riesz transforms: when $1 < p < \infty$, we have
\begin{align*}
	\|\drt^{(i)}\|_{\ell^p \to \ell^p} & = \|R^{(i)}\|_{L^p \to L^p} = \cot(\tfrac{\pi}{2p^*}).
\end{align*}
\end{theorem}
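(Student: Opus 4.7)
The plan is to prove matching upper and lower bounds for $\|\drt^{(i)}\|_{\ell^p\to\ell^p}$, with equality to $\cot(\pi/(2p^*))$. By the symmetry between the two coordinates, it suffices to treat $i=1$: the analogue of Theorem~\ref{d=2} and of Lemma~\ref{lem:RotationAsymptotics} for $i=2$ is obtained by interchanging the roles of $n_1$ and $n_2$.

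For the upper bound, I start from the integral representation of Theorem~\ref{d=2},
\[
\drt^{(1)}f(n) = \tfrac12 \int_0^1\!\!\int_{-\infty}^{\infty} \frac{1}{(1+a^2)^{3/2}}\,\dht_{a,b}f(n)\,da\,db.
\]
For each fixed pair $(a,b)$, the lattice $\Z^2$ decomposes as a disjoint union of the fibers $F_{a,b,l}$, and on each such fiber $\dht_{a,b}$ acts as the one-dimensional discrete Hilbert transform under the bijection $k\mapsto(k,\lfloor ak+b\rfloor + l)$. Summing the local $\ell^p$ estimates over $l$ and invoking Corollary~\ref{BanKwaCorollary} gives $\|\dht_{a,b}\|_{\ell^p\to\ell^p}=\cot(\pi/(2p^*))$ uniformly in $(a,b)$. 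Applying Minkowski's integral inequality to the representation above and using $\int_{-\infty}^{\infty}(1+a^2)^{-3/2}\,da = 2$, I obtain $\|\drt^{(1)}f\|_{\ell^p}\le \cot(\pi/(2p^*))\|f\|_{\ell^p}$.

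For the lower bound, I adapt the strategy used in Section~\ref{PDR} to prove Theorem~\ref{bestpro-dis-riesz}. Define the continuous-discrete operator $\wt{\drt}^{(1)}F(x)=\sum_{n\ne 0} F(x+n)K_{\drt^{(1)}}(n)$, noting that $K_{\drt^{(1)}}(n)=0$ when $n_1=0$. The argument of Proposition~\ref{homeker} shows $\|\wt{\drt}^{(1)}\|_{L^p\to L^p}=\|\drt^{(1)}\|_{\ell^p\to\ell^p}$, and that the same equality holds for each rescaled operator $\wt{\drt}^{(1),\eps}:=\tau_{1/\eps}\wt{\drt}^{(1)}\tau_{\eps}$ with $\tau_\eps F(x)=\eps^{2/p}F(\eps x)$. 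Exploiting the obvious antisymmetry $K_{\drt^{(1)}}(-n)=-K_{\drt^{(1)}}(n)$ and the scaling $K_{R_{\dis}^{(1)}}(n)=\eps^2 K^{(1)}(\eps n)$, I split
\[
\wt{\drt}^{(1),\eps}F(x) = \tfrac{\eps^2}{2}\!\!\sum_{n\ne 0}(F(x+\eps n)-F(x-\eps n))K^{(1)}(\eps n) + E_\eps(x),
\]
where the first term is a Riemann sum converging pointwise to $R^{(1)}F(x)$ for smooth, compactly supported $F$ (exactly as in the proof of Proposition~\ref{homeker}), and the error $E_\eps(x)=\tfrac12\sum_{n\ne 0}(F(x+\eps n)-F(x-\eps n))(K_{\drt^{(1)}}(n)-K_{R_{\dis}^{(1)}}(n))$ will be shown to vanish as $\eps\to 0^+$. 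Fatou's lemma then gives $\|R^{(1)}\|_{L^p\to L^p}\le \|\drt^{(1)}\|_{\ell^p\to\ell^p}$, and in view of \eqref{bestR_jH} this matches the upper bound.

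The main obstacle is the vanishing of the error term $E_\eps(x)$, which is the exact analogue of the crucial estimate in Lemma~\ref{lem:DiscreteApproximation}. I would handle it in two regimes: for $|n|\ge r$ I use Lemma~\ref{lem:RotationAsymptotics} to get $|K_{\drt^{(1)}}(n)-K_{R_{\dis}^{(1)}}(n)|<\delta |n|^{-2}$, combined with the Lipschitz bound $|F(x+\eps n)-F(x-\eps n)|\le C\eps |n|\mathbf{1}_{B(0,R/\eps)}(n)$, which produces a logarithmically growing sum multiplied by $\eps\delta$ and hence is $O(\delta)$; for $|n|<r$ only finitely many terms contribute and each is $O(\eps)$. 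Since $\delta$ is arbitrary, this proves $E_\eps(x)\to 0$ and completes the proof. Everything else is a mechanical adaptation of the arguments developed in Section~\ref{PDR}.
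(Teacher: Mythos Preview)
Your proposal is correct and follows essentially the same approach as the paper: the upper bound via Theorem~\ref{d=2} and Minkowski's inequality, and the lower bound by mimicking the proof of Lemma~\ref{lem:DiscreteApproximation} with Lemma~\ref{lem:RotationAsymptotics} replacing Lemma~\ref{lem:DiscreteAsymptotics}. One small slip: in dimension $d=2$ the sum $\sum_{0<|n|\le R/\eps}|n|^{-1}$ grows like $R/\eps$, not logarithmically, but this is harmless since after multiplying by $\eps\delta$ you still get $O(\delta)$, which is exactly what the paper obtains.
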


\subsection{Higher dimensions}
The same approach works in higher dimensions, too, but a closed-form expression for the corresponding kernel does not seem available. When $d \ge 2$, we define
\begin{align}\label{rotationRiesz}
    \drt^{(k)} f(n) & = \sum_{m \in \Z^d} K_{\drt^{(k)}}(n - m) f(m) ,
\end{align}
where the kernel for $k = 1$ is given in an integral form as follows. If $n = (n_1, \tilde{n}) \in \Z^d$ with $n_1 \in \Z$ and $\tilde{n} = (n_2, \ldots, n_d) \in \Z^{d-1}$, and if $n_1 > 0$, then
\begin{align*}
    K_{\drt^{(1)}}(n) & = \frac{1}{\pi n_1} \times C_d \int_{[0, 1]^{d - 1}} \int_{\frac{\tilde{n} - b}{n_1} + [0, \frac{1}{n_1})^{d - 1}} \frac{1}{(1 + |a|^2)^{(d + 1) / 2}} \, da db ,
\end{align*}
where $C_d$ is related to the constant $c_d$ in \eqref{RieszK} via
\begin{align*}
    C_d & = \biggl( \int_{\R^{d - 1}} \frac{1}{(1 + |a|^2)^{(d + 1) / 2}} \, da \biggr)^{-1} = \pi c_d .
\end{align*}
Furthermore, when $n_1 < 0$, then $K_{\drt^{(1)}}(n) = -K_{\drt^{(1)}}(-n)$. For a general $k$, the kernel $K_{\drt^{(k)}}(n)$ is equal to $K_{\drt^{(1)}}(n')$, where $n'$ is obtained from $n$ by swapping the first and $k$-th coordinate.

By definition, as in the two-dimensional case, for compactly supported $f : \Z^d \to \R$, we have
\begin{align}\label{MethodRot}
    \drt^{(1)} f(n) = C_d \int_{[0, 1]^{d - 1}} \int_{\R^{d - 1}} \frac{1}{(1 + |a|^2)^{(d + 1)/2}} \, \dht_{a,b} f(n) da db, 
\end{align}
where $\dht_{a, b}$ acts as the discrete Hilbert transform with respect to $n_1$ on each of the fibers
\begin{align*}
    F_{a,b,l} & = \{(n_1, \lfloor a n_1 + b \rfloor + l) : n_1 \in \Z\} = \{ n \in \Z^d : n_j - \lfloor a_j n_1 + b_j \rfloor = l_j, \, j = 2, 3, \ldots, d \} ,
\end{align*}
with $l \in \Z^{d - 1}$ (here we understand that the floor function in $\lfloor a n_1 + b \rfloor$ acts component-wise). Therefore,
\begin{align*}
    \|\drt^{(1)}\|_{\ell^p \to \ell^p} & \leq \cot(\tfrac{\pi}{2p^*}).
\end{align*}
On the other hand, below we prove that (as in Lemma \ref{lem:RotationAsymptotics} for $d=2$)
\begin{align}\label{eq:rot:higher}
	\lim_{|n| \to \infty} |n|^d |K_{\drt^{(1)}}(n) - K_{R_{\dis}^{(1)}}(n) | = \lim_{|n| \to \infty} |n|^d \biggl|K_{\drt^{(1)}}(n) - c_d \frac{n_1}{|n|^{(d + 1)/2}} \biggr| = 0 .
\end{align}
Once this is shown, by the same argument as in the case of $d=2$, we find that
\begin{align*}
	\|\drt^{(1)}\|_{\ell^p \to \ell^p} & \geq \|R^{(1)}\|_{L^p \to L^p} = \cot(\tfrac{\pi}{2p^*}).
\end{align*}
Thus, we conclude that in fact the norms are equal. We state this as a theorem.

\begin{theorem}\label{thm:rot:higher}
The discrete Riesz transforms $\drt^{(k)}$ introduced above have norms on $\ell^p$ equal to the norms on $L^p$ of the corresponding continuous Riesz transforms: when $1 < p < \infty$, we have
\begin{align*}
	\|\drt^{(k)}\|_{\ell^p \to \ell^p} & = \|R^{(k)}\|_{L^p \to L^p} = \cot(\tfrac{\pi}{2p^*}).
\end{align*}
\end{theorem}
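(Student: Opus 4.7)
The plan is to split the argument into a soft upper bound, which is essentially free from the integral representation, and a harder lower bound, which reduces to an asymptotic comparison between $K_{\drt^{(1)}}$ and the Calder\'on--Zygmund Riesz kernel. By symmetry it suffices to treat $k=1$.

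For the upper bound, I would first verify that $\drt^{(1)}$ admits the fibered representation
\[
    \drt^{(1)} f(n) = C_d \int_{[0,1]^{d-1}} \int_{\R^{d-1}} \frac{\dht_{a,b}f(n)}{(1+|a|^2)^{(d+1)/2}}\, da\, db,
\]
which is the content of the analog of Theorem~\ref{d=2} in higher dimensions: the inner cube of side $1/n_1$ in the definition of $K_{\drt^{(1)}}(n)$ is exactly the set of $a$ for which the constraint $n_j - \lfloor a_j n_1 + b_j\rfloor = 0$ picks out $m=0$ on the fiber. Since each $\dht_{a,b}$ acts as a one-dimensional discrete Hilbert transform along the fibers $F_{a,b,l}$ (these fibers form a partition of $\Z^d$), Corollary~\ref{BanKwaCorollary} gives $\|\dht_{a,b}\|_{\ell^p\to\ell^p}=\cot(\pi/(2p^*))$ uniformly in $(a,b)$. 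Minkowski's integral inequality and the identity $C_d\int_{\R^{d-1}}(1+|a|^2)^{-(d+1)/2}\, da = 1$ then yield $\|\drt^{(1)}\|_{\ell^p\to\ell^p}\le\cot(\pi/(2p^*))$.

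For the lower bound the main task is to establish the asymptotic~\eqref{eq:rot:higher}. Assume $n_1>0$; by translation and change of variables one sees that
\[
    K_{\drt^{(1)}}(n) = \frac{C_d}{\pi n_1}\int_{[0,1]^{d-1}}\Psi_n(b)\, db, \quad \Psi_n(b):=\int_{\frac{\tilde n-b}{n_1}+[0,1/n_1)^{d-1}}\frac{da}{(1+|a|^2)^{(d+1)/2}}.
\]
The cube of integration has volume $n_1^{-(d-1)}$ and shrinks to the point $\tilde n/n_1$ as $|n|\to\infty$. Using a first-order Taylor expansion of $a\mapsto (1+|a|^2)^{-(d+1)/2}$ around $\tilde n/n_1$, together with the uniform bound $|\nabla(1+|a|^2)^{-(d+1)/2}|\le C(1+|a|^2)^{-(d+2)/2}$, one obtains
\[
    \left|\Psi_n(b) - \frac{n_1^{-(d-1)}}{(1+|\tilde n/n_1|^2)^{(d+1)/2}}\right| \le \frac{C}{n_1^d(1+|\tilde n/n_1|^2)^{(d+2)/2}}.
\]
After averaging over $b\in[0,1]^{d-1}$ and multiplying by $C_d/(\pi n_1)$, this yields
\[
    \left|K_{\drt^{(1)}}(n) - c_d\,\frac{n_1}{|n|^{d+1}}\right| \le \frac{C'}{n_1\cdot n_1^d(1+|\tilde n/n_1|^2)^{(d+2)/2}} = \frac{C''|n|}{n_1^2\,|n|^{d+2}},
\]
which decays faster than $|n|^{-d}$ in the regime $n_1\gtrsim |n|$. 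The remaining regime $n_1\ll|n|$ is handled by combining this estimate with a uniform bound $|K_{\drt^{(1)}}(n)|\le C|n|^{-d}$ (itself a consequence of the integral representation and the integrability of $(1+|a|^2)^{-(d+1)/2}$), together with the observation that both $K_{\drt^{(1)}}(n)$ and $c_dn_1/|n|^{d+1}$ are $O(n_1/|n|^{d+1})$ there. This gives~\eqref{eq:rot:higher}.

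With~\eqref{eq:rot:higher} in hand, the lower bound follows exactly as in the passage from Lemma~\ref{lem:DiscreteAsymptotics} to Theorem~\ref{bestpro-dis-riesz}: introduce the continuous-discrete operator $\widetilde\drt^{(1)}F(x)=\sum_{n\ne 0}F(x+n)K_{\drt^{(1)}}(n)$, which has the same $L^p$ norm as $\drt^{(1)}$ has on $\ell^p$; rescale by $\tau_\eps F(x)=\eps^{d/p}F(\eps x)$; and show, via~\eqref{eq:rot:higher} and the proof strategy of Lemma~\ref{lem:DiscreteApproximation}, that $\tau_{1/\eps}\widetilde\drt^{(1)}\tau_\eps F(x)\to R^{(1)}F(x)$ pointwise for $F$ smooth and compactly supported. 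Fatou's lemma and~\eqref{bestR_jH} then give $\|\drt^{(1)}\|_{\ell^p\to\ell^p}\ge\|R^{(1)}\|_{L^p\to L^p}=\cot(\pi/(2p^*))$, completing the proof.

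The main obstacle is the asymptotic~\eqref{eq:rot:higher}. Unlike the $d=2$ case, where Lemma~\ref{lem:RotationAsymptotics} uses the explicit difference $|n-e_j|+|n+e_j|-2|n|$ and a clean Taylor bound, here one must control the integral average $\Psi_n(b)$ uniformly in $b$ and over all directions $\tilde n/n_1$. The estimate above is cleanest when $n_1$ is comparable to $|n|$; handling the regime where $n_1$ is small compared to $|n|$ carefully — so that the error still beats $|n|^{-d}$ — is the delicate part. By $k\leftrightarrow 1$ coordinate symmetry, the same argument covers every $\drt^{(k)}$.
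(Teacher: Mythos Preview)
Your overall plan matches the paper's exactly: the upper bound via the fibered representation and Minkowski's integral inequality, and the lower bound via the asymptotic~\eqref{eq:rot:higher} fed into the rescaling argument of Lemma~\ref{lem:DiscreteApproximation} and Fatou.

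Where you and the paper differ is in the bookkeeping for~\eqref{eq:rot:higher}, and this is the source of the ``delicate part'' you flag. The paper first substitutes $a=(\tilde n-b+v)/n_1$, $v\in[0,1)^{d-1}$, to write
\[
K_{\drt^{(1)}}(n)-K_{R_\dis^{(1)}}(n)=c_d n_1\int_{[0,1]^{d-1}}\int_{[0,1)^{d-1}}\biggl(\frac{1}{(n_1^2+|\tilde n-b+v|^2)^{(d+1)/2}}-\frac{1}{|n|^{d+1}}\biggr)dv\,db,
\]
and then applies the mean value theorem to $t\mapsto t^{-(d+1)}$ at $t_1=\sqrt{n_1^2+|\tilde n+w|^2}$ and $t_2=|n|$, where $w=-b+v\in(-1,1)^{d-1}$. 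Since $|t_1-t_2|\le|w|\le\sqrt{d-1}$ and $t_1\ge|n|-\sqrt{d-1}\ge\tfrac12|n|$ for $|n|$ large, this gives the uniform bound $C n_1/|n|^{d+2}$ with no case analysis at all. Your Taylor expansion in the slope variable $a$ is also valid, but note that if you simplify your own error term correctly you obtain
\[
\frac{C'}{n_1^{d+1}(1+|\tilde n/n_1|^2)^{(d+2)/2}}=\frac{C'\,n_1}{|n|^{d+2}},
\]
not $C''|n|/(n_1^2|n|^{d+2})$ as written. Either expression already goes to zero after multiplication by $|n|^d$, uniformly over all $n_1\ge1$, so the regime $n_1\ll|n|$ needs no separate treatment; the difficulty you anticipate is a phantom created by working in $a$-coordinates rather than physical coordinates.
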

\begin{proof}
We only need to prove~\eqref{eq:rot:higher}. As before, we write $n = (n_1, \tilde{n})$, where $\tilde{n} = (n_2, \ldots, n_d)$, and since both kernels are odd functions of $n_1$, without loss of generality we assume that $n_1 > 0$. We have
\begin{align*}
    & K_{\drt^{(1)}}(n) - K_{R_{\dis}^{(1)}}(n) \\
    & \qquad = \frac{c_d}{n_1} \int_{[0, 1]^{d - 1}} \int_{\frac{\tilde{n} - b}{n_1} + [0, \frac{1}{n_1})^{d - 1}} \frac{1}{(1 + |a|^2)^{(d + 1) / 2}} \, da db - c_d \, \frac{n_1}{(n_1^2 + |\tilde{n}|^2)^{(d + 1) / 2}} \\
    & \qquad = \frac{c_d}{n_1^d} \int_{[0, 1]^{d - 1}} \int_{[0, 1)^{d - 1}} \frac{n_1^{d + 1}}{(n_1^2 + |\tilde{n} - b + v|^2)^{(d + 1) / 2}} \, dv db - c_d \, \frac{n_1}{(n_1^2 + |\tilde{n}|^2)^{(d + 1) / 2}} \\
    & \qquad = c_d n_1 \int_{[0, 1]^{d - 1}} \int_{[0, 1)^{d - 1}} \biggl( \frac{1}{(n_1^2 + |\tilde{n} - b + v|^2)^{(d + 1) / 2}} - \frac{1}{(n_1^2 + |\tilde{n}|^2)^{(d + 1) / 2}} \biggr) dv db .
\end{align*}
Since in the given region of integration we have $-b + v \in [-1, 1]^{d - 1}$, it follows that
\begin{align*}
    |K_{\drt^{(1)}}(n) - K_{R_{\dis}^{(1)}}(n) | & \leq c_d n_1 \sup_{w \in [-1, 1]^{d - 1}} \biggl| \frac{1}{(n_1^2 + |\tilde{n} + w|^2)^{(d + 1) / 2}} - \frac{1}{(n_1^2 + |\tilde{n}|^2)^{(d + 1) / 2}} \biggr| .
\end{align*}
We now simply use the mean value theorem for the function $t \mapsto t^{-d - 1}$ evaluated at $t_1 = \sqrt{n_1^2 + |\tilde{n} + w|^2}$ and $t_2 = \sqrt{n_1^2 + |\tilde{n}|^2} = |n|$: we have
\begin{align*}
    \biggl| \frac{1}{t_1^{d + 1}} - \frac{1}{t_2^{d + 1}} \biggr| \leq |t_1 - t_2| \times \frac{d + 1}{\min\{t_1^{d + 2}, t_2^{d + 2}\}} \, .
\end{align*}
Since $|t_1 - t_2| \leq |w| \leq \sqrt{d - 1}$, we have $t_1 \geq t_2 - \sqrt{d - 1} = |n| - \sqrt{d - 1} \ge \frac{1}{2} |n|$ when $|n|$ is large enough, and thus
\begin{align*}
    \biggl| \frac{1}{(n_1^2 + |\tilde{n} + w|^2)^{(d + 1) / 2}} - \frac{1}{(n_1^2 + |\tilde{n}|^2)^{(d + 1) / 2}} \biggr| & = \biggl| \frac{1}{t_1^{d + 1}} - \frac{1}{t_2^{d + 1}} \biggr| \leq \sqrt{d - 1} \times \frac{2^{d + 2} (d + 1)}{|n|^{d + 2}}
\end{align*}
when $|n|$ is large enough. We thus conclude that when $|n|$ is large enough, then
\begin{align*}
    |K_{\drt^{(1)}}(n) - K_{R_{\dis}^{(1)}}(n) | & \leq 2^{d + 1} (d + 1) c_d \sqrt{d - 1} \, \frac{n_1}{|n|^{d + 2}} \, .
\end{align*}
The right-hand side multiplied by $|n|^d$ goes to zero as $|n| \to \infty$, and the proof is complete.
\end{proof}

We remark that a similar construction of the discrete Riesz transform using the method of rotations can be carried out using the probabilistic discrete Hilbert transform $T_{\bH}$ instead of the discrete Hilbert transform $H_\dis$ applied above. This procedure will lead to a transform with the same norm on $\ell^p$, but with a kernel which is greater in absolute value than the kernel of $\drt^{(k)}$ (in the point-wise sense). However, we did not pursue this direction.

\section{Numerical comparison of kernels}\label{Numerical}
We end with some remarks on numerical comparisons on the kernels for the discrete operators $R_\dis^{(k)}$, $\drt^{(k)}$, and $T_{\bH^{(k)}}$. Numerical evaluation of the kernels for $R_\dis^{(k)}$ and $\drt^{(k)}$ when $d=2$ presents no difficulties. The situation is quite different for $T_{\bH^{(k)}}$, which is given by a triple integral involving the periodic Poisson kernel $h(x, y)$.

In the following numerical simulations we used {\it Wolfram Mathematica 10} and a relatively naive approach, which may lead to significant errors. That said, the outcome turned out to be relatively stable when we varied the parameters, so we believe that our approximations are correct to roughly fourth significant digit.

The periodic Poisson kernel $h(x, y)$ was approximated using the definition~\eqref{doob-h} when $y \leq \tfrac{1}{4}$ and using the expression~\eqref{FourierInv} based on the Poisson summation formula when $y \ge \tfrac{1}{4}$. Additionally, since $h(x, y)$ converges to $1$ exponentially fast as $y \to \infty$, for $y \ge 10$ we simply approximated $h(x, y)$ by a constant $1$. To speed up numerical integration, we evaluated the above numerical approximation to $h(x, y)$ in a limited number of points, and then we used appropriate interpolation to find the values of $h(x, y)$ between these points.

Numerical integration was done using standard methods available in {\it Mathematica}. Although {\it Mathematica} warned about slow convergence, the estimated error of numerical integration appears to be less significant than the errors in approximation of the periodic Poisson kernel.

\begin{figure}
\includegraphics[width=10cm]{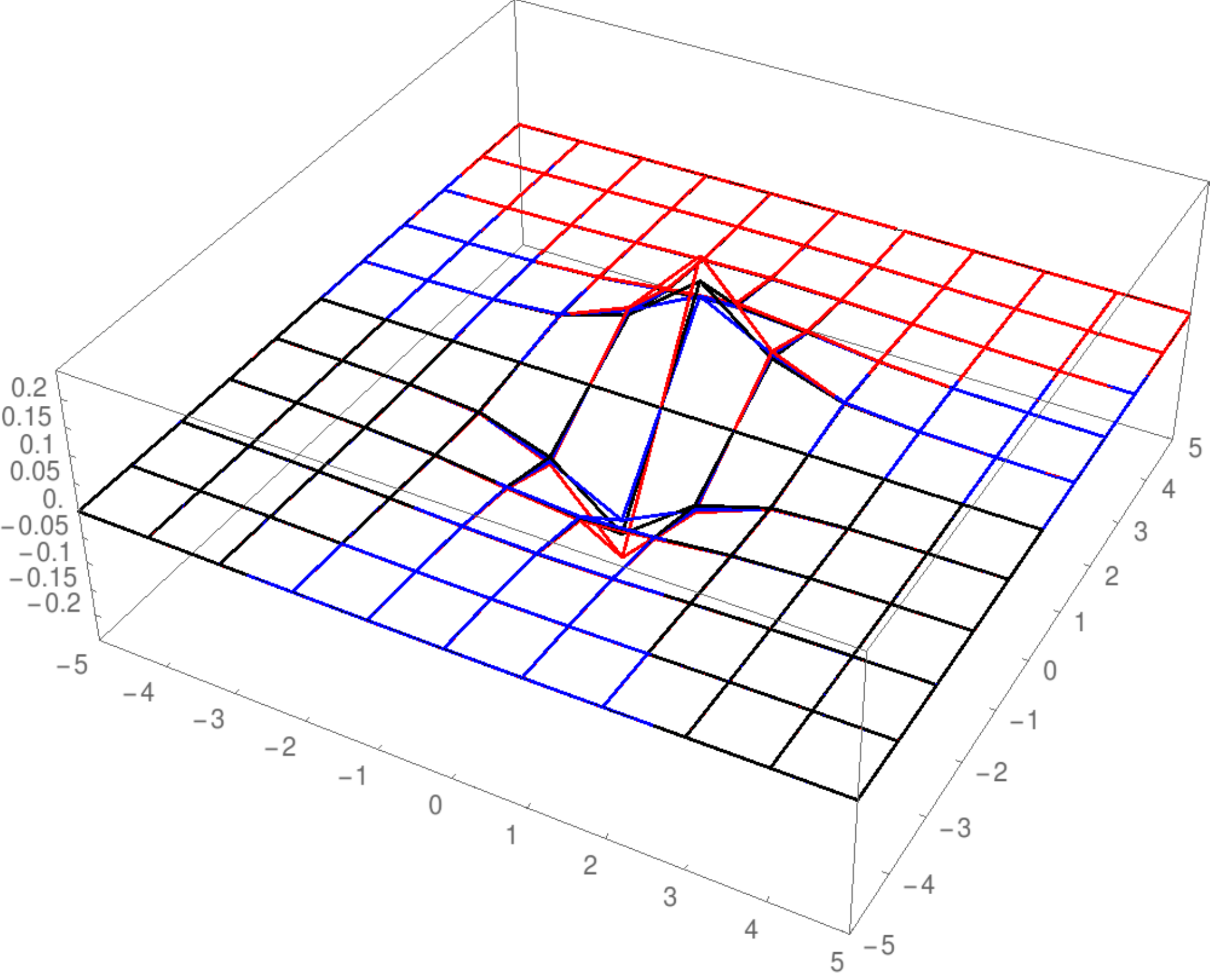}
\caption{Values of the kernels of the three transforms: $\bK_{\bH^{(1)}}(n_1, n_2)$ (red), $K_{\drt^{(1)}}(n_1, n_2)$ (blue), and $ K_{R_\dis^{(1)}} (n_1, n_2)$ (black) for $n_1, n_2 \in \{-5, -4, \ldots, 5\}$.}
\label{fig:values}
\end{figure}

\begin{figure}
\includegraphics[width=10cm]{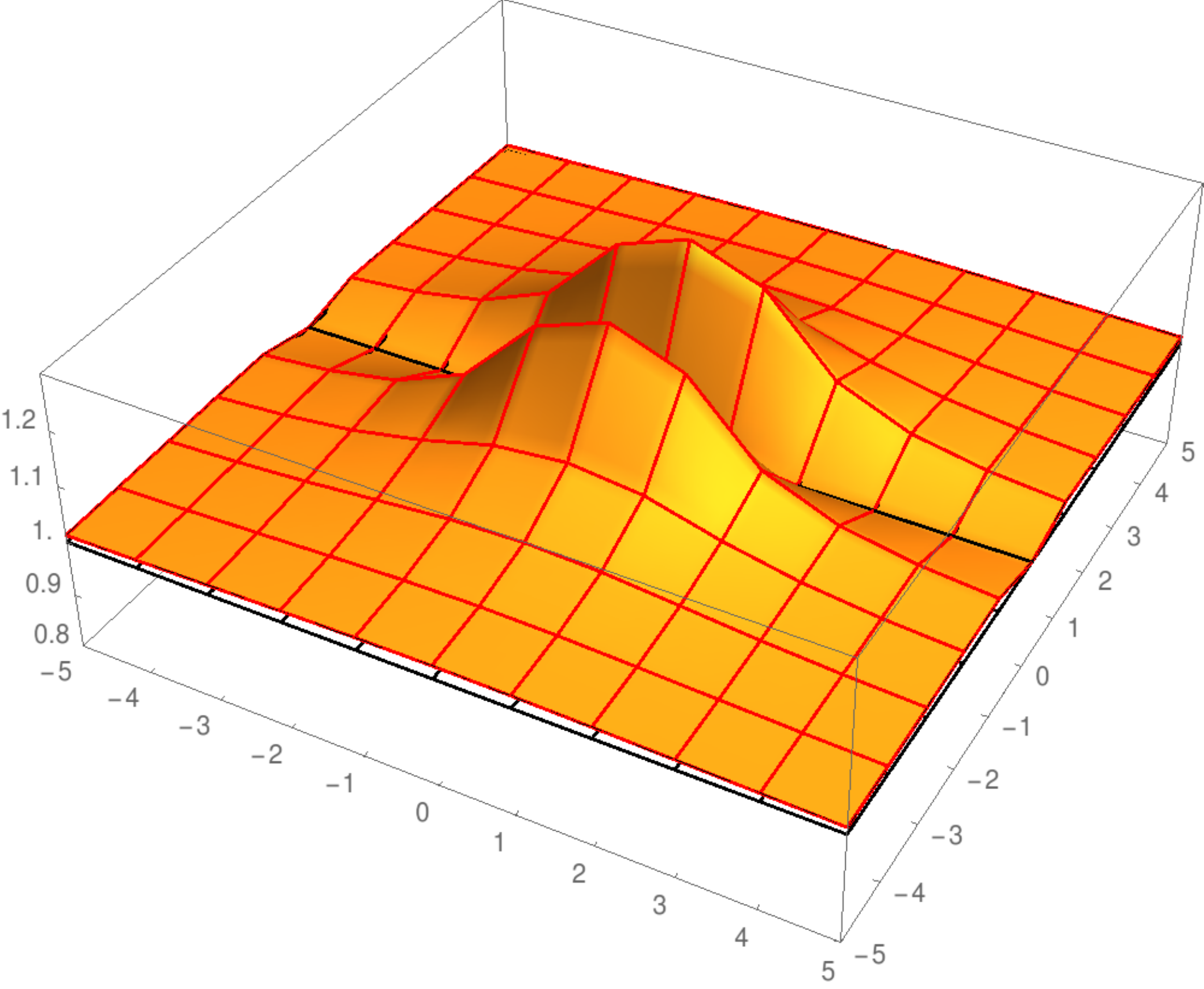}
\caption{Ratio between the kernels of two transforms: $\bK_{\bH^{(1)}}(n_1, n_2) / K_{R_\dis^{(1)}} (n_1, n_2)$ (red) compared with constant $1$ (black) for $n_1, n_2 \in \{-5, -4, \ldots, 5\}$. When $n_1 = 0$, we set $0 / 0 = 1$.}
\label{fig:rel:prob}
\bigskip
\includegraphics[width=10cm]{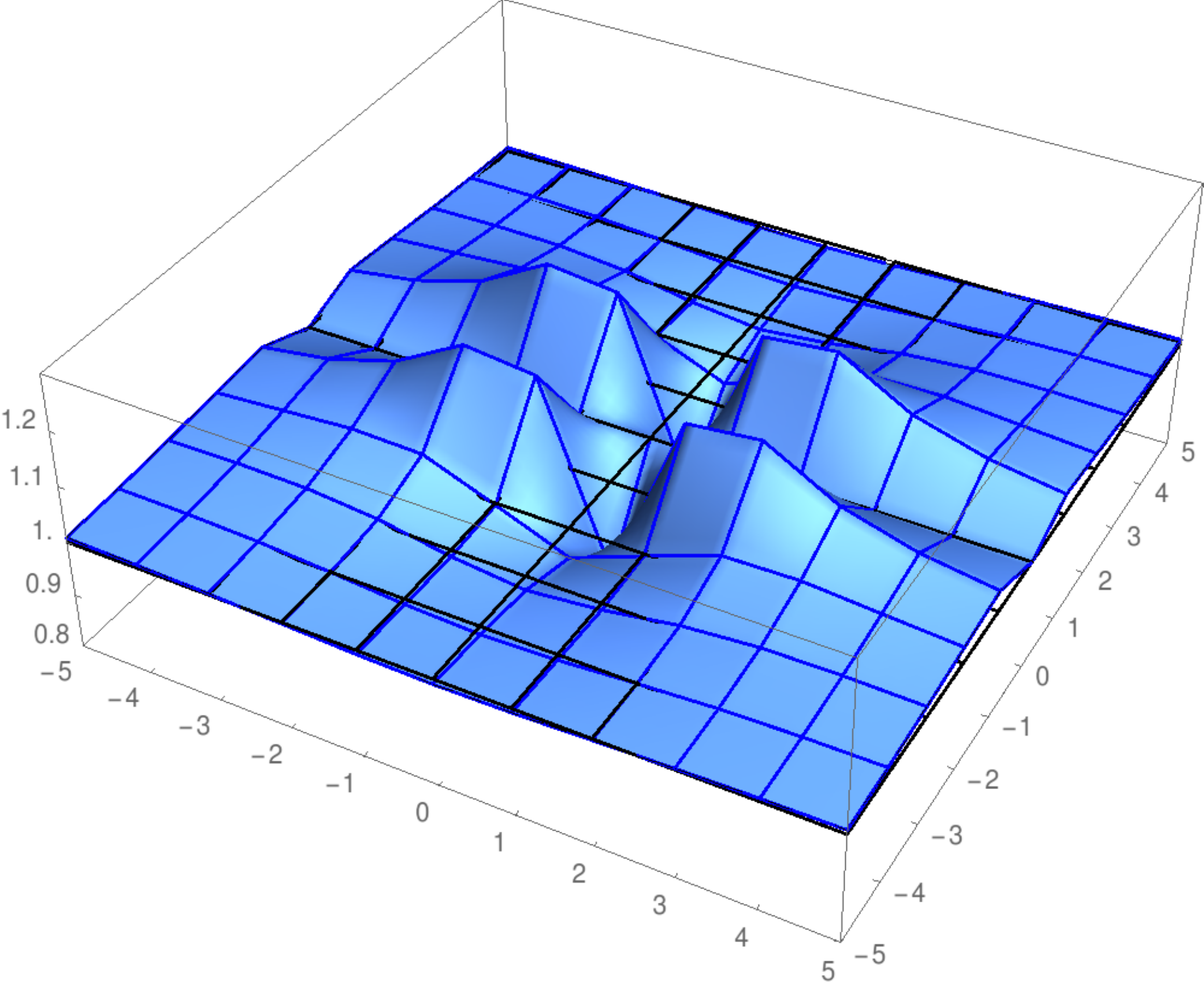}
\caption{Ratio between the kernels of two transforms: $K_{\drt^{(1)}}(n_1, n_2) / K_{R_\dis^{(1)}} (n_1, n_2)$ (blue) compared with constant $1$ (black) for $n_1, n_2 \in \{-5, -4, \ldots, 5\}$. When $n_1 = 0$, we set $0 / 0 = 1$.}
\label{fig:rel:rot}
\end{figure}

The values of the three kernels are shown in Figure~\ref{fig:values}. The ratio between the kernels of the probabilistic and the CZ discrete Riesz transform are shown in Figure~\ref{fig:rel:prob}, while a similar plot for the method of rotations and the Riesz transform $R_{\dis}^1$ is shown in Figure~\ref{fig:rel:rot}. Numerical results are presented in Tables~\ref{tbl:values}, \ref{tbl:rel1} and~\ref{tbl:rel2}.

Our simulations suggest that there is no general point-wise relation between the kernels of $\drt^{(k)}$ and $R_\dis^{(k)}$, nor there is one between the kernels of $T_{\bH^{(k)}}$ and $\drt^{(k)}$. However, it seems that the kernel of $T_{\bH^{(k)}}$ is always greater (in the absolute value) than the kernel of $R_\dis^{(k)}$. This leads to the following conjecture which we know is true for $d = 1$ by~\eqref{kerneld-1}.

\begin{conjecture}\label{conjPointwise}
For all $d\geq 2$, we have $|\bK_{\bH^{(k)}}(n)| \ge | K_{R_\dis^{(k)}}(n) |$ for every $n \in \Z^d$. 
\end{conjecture}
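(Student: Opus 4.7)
The starting point is the identity
\begin{equation}\label{eq:plan-diff}
K_{\bH^{(k)}}(n) - K_{R_\dis^{(k)}}(n) = \int_{\R^d}\!\int_0^{\infty} U_n(x,y)\Paren{\frac{1}{h(x,y)} - 1}dy\,dx,
\end{equation}
which follows by subtracting the two integral representations of Theorem~\ref{DisProbVSDiscRies} and Proposition~\ref{DisRieszKern1}. Both kernels are odd in $n_k$ (Remark~\ref{Properties-KH}), vanish when $n_k=0$, and $K_{R_\dis^{(k)}}(n) = c_d n_k/|n|^{d+1}$ is strictly positive when $n_k>0$. Consequently, the conjecture reduces to showing that the right-hand side of \eqref{eq:plan-diff} is non-negative whenever $n_k>0$ — this simultaneously forces $K_{\bH^{(k)}}(n)\geq K_{R_\dis^{(k)}}(n)>0$ and gives the pointwise majorization.

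The first approach I would try is to mimic the $d=1$ derivation of the closed form \eqref{kerneld-1}. There, a sequence of integrations by parts using the explicit expression $h(x,y) = \sinh(2\pi y)/(\cosh(2\pi y) - \cos(2\pi x))$ converted $K_{\bH}(n)$ into $K_{H_\dis}(n)$ plus a manifestly positive integral involving $\sinh^{-2}$. In higher dimensions we have no such closed form, but we do have the Poisson-summation identity $h - 1 = \sum_{m\neq 0} e^{-2\pi|m|y} e^{2\pi i m\cdot x}$ from \eqref{FourierInv}. Combining it with the representation $U_n = -4 \partial_{x_k} p_0 \partial_y(y p_n)$ used in \eqref{byparts}, one can try to integrate by parts in $x_k$ and in $y$ against each Fourier mode of $h-1$, isolating the zero mode that reproduces $K_{R_\dis^{(k)}}(n)$ and reorganizing the tail as a sum of non-negative contributions — a multi-dimensional analogue of the $\sinh^{-2}$ integral.

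If that route is intractable, a complementary plan is to prove the conjecture separately for large and small $|n|$. The asymptotic analysis in Lemma~\ref{lem:DiscreteAsymptotics} already shows $|n|^d (K_{\bH^{(k)}}(n) - K_{R_\dis^{(k)}}(n))\to 0$; refining that argument by tracking the next term in the expansion of $1/h(|n|x,|n|y) - 1$ (for instance via Lemma~\ref{lem:hlimit} and the estimate $h(x,y) = 1 + O(e^{-2\pi y})$ for $y\geq 1$) should reveal the sign of the leading-order correction. If positivity holds asymptotically, the conjecture reduces to finitely many small-$|n|$ cases, each amenable to the numerical verification carried out in Section~\ref{Numerical} (compare Figure~\ref{fig:rel:prob} and Table~\ref{tbl:rel1}).

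The hard part is that neither factor in the integrand of \eqref{eq:plan-diff} has a definite sign: by the bound \eqref{eq:hupperbound1}, $h$ exceeds $1$ near lattice points at small $y$, while $h<1$ in the corners of the fundamental cube at small $y$; and the explicit formula $U_n = \frac{4(d+1)x_k p_0 p_n (2|x-n|^2 + (1-d)y^2)}{(|x|^2+y^2)(|x-n|^2+y^2)}$ shows that $U_n$ changes sign across $x_k=0$ and across the hyperboloid $2|x-n|^2 = (d-1)y^2$. A successful proof must therefore exploit a genuine cancellation rather than a pointwise estimate; finding the right algebraic identity to exhibit this cancellation in general $d$ — absent the lucky $\sinh^{-2}$ simplification available when $d=1$ — is precisely why the statement is posed as a conjecture.
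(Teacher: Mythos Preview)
The statement you are addressing is a \emph{conjecture} in the paper, not a theorem: the paper offers no proof and explicitly labels it as open, noting only that the $d=1$ case follows from the closed-form expression~\eqref{kerneld-1}. There is therefore nothing to compare your proposal against.

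Your proposal is not a proof but a survey of plausible strategies, and you acknowledge as much in your final paragraph. The identity~\eqref{eq:plan-diff} and the reduction to non-negativity for $n_k>0$ are correct and are the natural starting point. However, neither of your two plans is carried through: the first (expanding $h-1$ in Fourier modes and integrating by parts) would need a concrete reorganization of the resulting series into manifestly non-negative terms, which you do not supply; the second (asymptotic positivity plus finite verification) would at best prove the conjecture for a \emph{fixed} $d$ and sufficiently large $|n|$, with the remaining finitely many cases handled numerically --- this is not a proof for all $d\geq 2$, and even for a single $d$ the numerical step would require rigorous error bounds rather than the heuristic computations of Section~\ref{Numerical}. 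Your closing diagnosis of the obstruction (neither $U_n$ nor $1/h-1$ has a sign, so a genuine cancellation identity is needed) is accurate and matches the reason the paper leaves the statement as a conjecture.
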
 

The above numerical findings give a little insight into Question~\ref{convZd}, which asks whether $K_{R_\dis^{(k)}}$ is the convolution of $\bK_{\bH^{(k)}}$ with some probability kernel. Indeed, although intuitively point-wise domination asserted in Conjecture~\ref{conjPointwise} appears to be a necessary condition for a positive answer to Question~\ref{convZd}, neither of these statements implies the other one.

On the other hand, our calculations strongly suggest that in dimension $d = 2$ the maximum of $K_{\drt^{(1)}}(n_1, n_2)$ is strictly smaller than the maximum of $ K_{R_\dis^{(1)}} (n_1, n_2)$; both maxima are attained at $(n_1, n_2) = (1, 0)$. If this is indeed the case, then $ K_{R_\dis^{(1)}} $ is clearly {\it not} a convolution of $K_{\drt^{(1)}}$ and a probability kernel. Thus, we expect that the analogue of Question~\ref{convZd} for $\drt^{(k)}$ instead of $T_{\bH^{(k)}}$ has a negative answer.

Finally, one can ask if the analogue of Question~\ref{convZd} holds for the discrete Riesz transform obtained with the method of rotations, but using the probabilistic discrete Hilbert transform $T_{\bH}$ instead of the usual discrete Hilbert transform $H_\dis$. We did not attempt to answer this question.

\begin{table}
\footnotesize
\caption{{Values of the kernels of the three transforms: $\bK_{\bH^{(1)}}(n_1, n_2)$ (top row), $K_{\drt^{(1)}}(n_1, n_2)$ (middle row) and $ K_{R_\dis^{(1)}} (n_1, n_2)$ (bottom row) for $n_1, n_2 \in \{0, 1, \ldots, 5\}$, $n_1 \ne 0$. The largest value in each cell is set in bold, while the smallest one is given in slanted type.}}
\label{tbl:values}
\newcommand{\tabledata}[3]{\begin{tabular}{c}#1\\#2\\#3\end{tabular}}
\newcommand{\one}[1]{\bf #1}
\newcommand{\two}[1]{#1}
\newcommand{\thr}[1]{\sl #1}
\begin{tabular}{cc|c|c|c|c|c|c}
&& \multicolumn{6}{c}{$n_2$} \\
&& 0 & 1 & 2 & 3 & 4 & 5 \\
\cline{1-8}
& 1 &
\tabledata{\one{0.2051}}{\thr{0.1318}}{\two{0.1592}} &
\tabledata{\one{0.0698}}{\two{0.0649}}{\thr{0.0563}} &
\tabledata{\two{0.0158}}{\one{0.0166}}{\thr{0.0142}} &
\tabledata{\two{0.0053}}{\one{0.0055}}{\thr{0.0050}} &
\tabledata{\two{0.0024}}{\one{0.0024}}{\thr{0.0023}} &
\tabledata{\two{0.0012}}{\one{0.0012}}{\thr{0.0012}} \\
\cline{2-8}
& 2 &
\tabledata{\one{0.0446}}{\thr{0.0376}}{\two{0.0398}} &
\tabledata{\one{0.0315}}{\thr{0.0284}}{\two{0.0285}} &
\tabledata{\one{0.0151}}{\two{0.0147}}{\thr{0.0141}} &
\tabledata{\two{0.0071}}{\one{0.0071}}{\thr{0.0068}} &
\tabledata{\two{0.0037}}{\one{0.0037}}{\thr{0.0036}} &
\tabledata{\two{0.0021}}{\one{0.0021}}{\thr{0.0020}} \\
\cline{2-8}
$n_1$ & 3 &
\tabledata{\one{0.0188}}{\thr{0.0172}}{\two{0.0177}} &
\tabledata{\one{0.0160}}{\thr{0.0149}}{\two{0.0151}} &
\tabledata{\one{0.0106}}{\two{0.0103}}{\thr{0.0102}} &
\tabledata{\one{0.0065}}{\two{0.0064}}{\thr{0.0063}} &
\tabledata{\one{0.0039}}{\two{0.0039}}{\thr{0.0038}} &
\tabledata{\one{0.0025}}{\two{0.0025}}{\thr{0.0024}} \\
\cline{2-8}
& 4 &
\tabledata{\one{0.0103}}{\thr{0.0098}}{\two{0.0099}} &
\tabledata{\one{0.0094}}{\thr{0.0090}}{\two{0.0091}} &
\tabledata{\one{0.0073}}{\thr{0.0071}}{\two{0.0071}} &
\tabledata{\one{0.0052}}{\two{0.0051}}{\thr{0.0051}} &
\tabledata{\one{0.0036}}{\two{0.0036}}{\thr{0.0035}} &
\tabledata{\one{0.0025}}{\two{0.0025}}{\thr{0.0024}} \\
\cline{2-8}
& 5 &
\tabledata{\one{0.0065}}{\thr{0.0063}}{\two{0.0064}} &
\tabledata{\one{0.0061}}{\thr{0.0060}}{\two{0.0060}} &
\tabledata{\one{0.0052}}{\thr{0.0051}}{\two{0.0051}} &
\tabledata{\one{0.0041}}{\two{0.0040}}{\thr{0.0040}} &
\tabledata{\one{0.0031}}{\two{0.0030}}{\thr{0.0030}} &
\tabledata{\one{0.0023}}{\two{0.0023}}{\thr{0.0023}}
\end{tabular}
\end{table}

\begin{table}
\footnotesize
\caption{{Values of the ratios $\bK_{\bH^{(1)}}(n_1, n_2) / K_{R_\dis^{(1)}} (n_1, n_2)$ of kernels of two transforms for $n_1, n_2 \in \{0, 1, \ldots, 5\}$, $n_1 \ne 0$.}}
\label{tbl:rel1}
\newcommand{\one}[1]{\bf #1}
\newcommand{\two}[1]{#1}
\begin{tabular}{cc|c|c|c|c|c|c}
&& \multicolumn{6}{c}{$n_2$} \\
&& 0 & 1 & 2 & 3 & 4 & 5 \\
\cline{1-8}
& 1 &
\one{1.2885} &
\one{1.2413} &
\one{1.1127} &
\one{1.0593} &
\one{1.0356} &
\one{1.0235} \\
\cline{2-8}
& 2 &
\one{1.1200} &
\one{1.1067} &
\one{1.0717} &
\one{1.0458} &
\one{1.0303} &
\one{1.0211} \\
\cline{2-8}
$n_1$ & 3 &
\one{1.0615} &
\one{1.0567} &
\one{1.0450} &
\one{1.0333} &
\one{1.0243} &
\one{1.0180} \\
\cline{2-8}
& 4 &
\one{1.0364} &
\one{1.0345} &
\one{1.0298} &
\one{1.0241} &
\one{1.0191} &
\one{1.0150} \\
\cline{2-8}
& 5 &
\one{1.0239} &
\one{1.0230} &
\one{1.0208} &
\one{1.0179} &
\one{1.0149} &
\one{1.0123}
\end{tabular}
\bigskip
\footnotesize
\caption{{Values of the ratios $K_{\drt^{(1)}}(n_1, n_2) / K_{R_\dis^{(1)}} (n_1, n_2)$ of kernels of two transforms for $n_1, n_2 \in \{0, 1, \ldots, 5\}$, $n_1 \ne 0$.}}
\label{tbl:rel2}
\begin{tabular}{cc|c|c|c|c|c|c}
&& \multicolumn{6}{c}{$n_2$} \\
&& 0 & 1 & 2 & 3 & 4 & 5 \\
\cline{1-8}
& 1 &
\two{0.8284} &
\one{1.1530} &
\one{1.1667} &
\one{1.0947} &
\one{1.0574} &
\one{1.0379} \\
\cline{2-8}
& 2 &
\two{0.9443} &
\two{0.9959} &
\one{1.0452} &
\one{1.0483} &
\one{1.0385} &
\one{1.0292} \\
\cline{2-8}
$n_1$ & 3 &
\two{0.9737} &
\two{0.9873} &
\one{1.0094} &
\one{1.0205} &
\one{1.0221} &
\one{1.0199} \\
\cline{2-8}
& 4 &
\two{0.9848} &
\two{0.9897} &
\two{0.9997} &
\one{1.0077} &
\one{1.0116} &
\one{1.0125} \\
\cline{2-8}
& 5 &
\two{0.9902} &
\two{0.9923} &
\two{0.9972} &
\one{1.0022} &
\one{1.0057} &
\one{1.0075}
\end{tabular}
\end{table}

\newpage

\subsection*{Acknowledgments}  
We express our thanks to Renming Song for a helpful  conversation on positive-definite functions and reference \cite{Jacob1}, and to Tomasz Szarek for reference \cite{Kov}.  
We are grateful to  Mark Ashbaugh for useful conversations on special functions. The research for this paper was conducted  while the second author was a J. L. Doob research assistant professor at the University of Illinois at Urbana-Champaign.  With deep appreciation and respect,  R. Ba\~nuelos acknowledges the countless conversations he had for almost 40 years with the late Richard Gundy on topics related to those of this paper.

\bibliography{ref.bib}

\end{document}